\newtheorem{theorem}{Theorem}
\newtheorem{proposition}[theorem]{Proposition}
\newtheorem{lemma}[theorem]{Lemma}
\newtheorem{corollary}[theorem]{Corollary}
\theoremstyle{definition}
\newtheorem{remark}[theorem]{Remark}
\newcommand{\cref}[1]{Corollary~\ref{c.#1}}
\numberwithin{equation}{section}
\numberwithin{theorem}{section}
\newcommand{\N}{\mathbb{N}}
\newcommand{\R}{\mathbb{R}}
\newcommand{\HH}{\mathcal{H}}
\newcommand{\ep}{\varepsilon}
\newcommand{\average}{{\mathchoice {\kern1ex\vcenter{\hrule height.4pt
width 6pt depth0pt} \kern-9.7pt} {\kern1ex\vcenter{\hrule
height.4pt width 4.3pt depth0pt} \kern-7pt} {} {} }}
\newcommand{\ave}{\average\int}
\def\R{\mathbb{R}}
\renewcommand{\bar}{\overline}
\renewcommand{\tilde}{\widetilde}
\begin{document}

\title[On the free boundary of the obstacle problem]{On the fine structure of the free boundary\\ for the classical obstacle problem}

\begin{abstract}
In the classical obstacle problem,
the free boundary can be decomposed into ``regular'' and ``singular'' points. As shown by Caffarelli in his seminal papers \cite{C77,C98}, regular points consist of smooth hypersurfaces, while singular points are contained in a stratified union of $C^1$ manifolds of varying dimension. In two dimensions, this $C^1$ result has been improved to $C^{1,\alpha}$ by Weiss \cite{W99}.

In this paper we prove that, for $n=2$  singular points are locally contained in a $C^2$ curve. 
In higher dimension $n\ge 3$, we show that the same result holds with $C^{1,1}$ manifolds (or with countably many $C^2$ manifolds), up to the presence of some ``anomalous'' points of higher codimension.
In addition, we prove that the higher dimensional stratum is always contained   in a $C^{1,\alpha}$ manifold, thus extending to every dimension the result in \cite{W99}.

We note that, in terms of density decay estimates for the contact set, our result is optimal.
In addition, for $n\ge3$ we construct  examples of very symmetric solutions exhibiting linear spaces of anomalous points, proving that our bound on their Hausdorff dimension is sharp.
\end{abstract}

\author[A. Figalli]{Alessio Figalli}
\address{ETH Z\"urich, Mathematics Dept., R\"amistrasse 101, 8092 Z\"urich, Switzerland.}
\email{alessio.figalli@math.ethz.ch}

\author[J. Serra]{Joaquim Serra}
\address{ETH Z\"urich, Mathematics Dept., R\"amistrasse 101, 8092 Z\"urich, Switzerland.}
\email{joaquim.serra@math.ethz.ch}

\keywords{}
%\subjclass[2010]{35B27, 35J60, 60F17}
\date{\today}

\maketitle

\section{Introduction}

%The aim of this paper is to study the fine structure of the free boundary for the classical obstacle problem.

The classical obstacle problem consists in studying the regularity of solutions to the minimization problem
$$
\min_{v}\biggl\{\int_{B_1}\frac{|\nabla v|^2}{2}\,:\, v \geq \psi \text{ in $B_1$},\,v|_{\partial B_1}=g\biggr\},
$$
where $g:\partial B_1\to \R$ is some prescribed boundary condition, and the ``obstacle'' $\psi:B_1\to \R$
satisfies $\psi|_{\partial B_1}<g$.

Assuming that $\psi$ is smooth, it is well-known that this problem has a unique solution $v$ of class $C^{1,1}_{\rm loc}$ \cite{BK74}, and that $u:=v-\psi$ satisfies the Euler-Lagrange equation
$$
\Delta u=-\Delta\psi \,\chi_{\{u>0\}} \qquad \text{in $B_1$}.
$$

As already observed in \cite{C77,C98}, in order to prove some regularity results for the \emph{free boundary} $\partial\{u>0\}$ it is necessary to assume that $\Delta\psi<0$.
In addition, as also noticed in \cite{C98,W99,M03,PSU12}, from the point of view of the local structure it suffices to understand the model case $\Delta\psi\equiv -1$.
For this reason, from now on, we shall focus on the problem
\begin{equation}\label{obstaclepb}
\Delta u=\chi_{\{u>0\}},\quad u \geq 0 \qquad \mbox{in } B_1\subset \R^n.
\end{equation}

As shown by Caffarelli in his seminal papers \cite{C77,C98}, points of the {free boundary} $\partial\{u>0\}$ are divided into two classes: regular points and singular points. A free boundary point $x_\circ$ is either regular or singular depending on the type of blow-up of $u$ at that point. More precisely:
\begin{equation}\label{regular}
x_\circ \mbox{ is called \emph{regular} point}  \quad \Leftrightarrow \quad  \quad r ^{-2} u(x_\circ+ rx) \ \stackrel{r\downarrow 0}\longrightarrow\  \frac 1 2 \max\{\boldsymbol e\cdot x,0\}^2 
\end{equation}
for some $\boldsymbol e=\boldsymbol e_{x_\circ}\in \mathbb S^{n-1}$, and
\begin{equation}\label{singular}
x_\circ \mbox{ is called \emph{singular} point}  \quad \Leftrightarrow \quad  r ^{-2} u(x_\circ+ rx) \ \stackrel{r\downarrow 0}\longrightarrow \ p_{*,x_\circ}(x) :=\frac 1 2 x\cdot A x 
\end{equation}
for some symmetric nonnegative definite matrix $A=A_{x_\circ}$ with  ${\rm tr}(A)=1$. 
The existence of the previous limits in \eqref{regular} and \eqref{singular},
%---and, in particular, their uniqueness\footnote{Not only $r_k^{-2} u(x_\circ+ r_k x)$ has converging partial subsequence for all $r_k\downarrow 0$ but really the limit exists (it is the same for every sequene $r_k\downarrow 0$ }---, 
as well as the classification of possible blow-ups are well-known results; see \cite{C98,W99,M03,PSU12}.

By the theory in \cite{KN77,C77} (see also \cite{CR76,CR77,Sak91,Sak93,C98,M00,PSU12}), the free boundary is an analytic hypersurface near regular points. On the other hand, near singular points the \emph{contact set} $\{u=0\}$ forms cups and can be pretty wild ---see for instance the examples given in  \cite{Sch76} and  \cite{KN77}. Moreover, as shown in \cite{Sch76}, even $C^\infty$ strictly superhamonic obstacles in the plane ($n=2$) may lead to  contact sets with Cantor set like structures. In particular, in such examples, the contact set has (locally) an infinite number of connected components, each containing singular points. 

Despite these ``negative'' results showing that singular points could be rather bad, it is still possible to prove some nice structure. More precisely, singular points are naturally stratified according to the dimension of the linear space  
\[L_{x_\circ} := \{ p_{*,x_\circ} =0 \} = {\rm ker}(A_{x_\circ}).\]
For $m\in \{0,1,2,\dots, n-1\}$ we define the $m$-th stratum as
\[
\Sigma_{m} := \big\{ x_\circ \ : \ \mbox{ singular point with } \dim( L_{x_\circ}) =m\big\}.
\]
As shown by Caffarelli in \cite{C98},
each stratum $\Sigma_m$ is locally contained in a $m$-dimensional manifold of class $C^1$ (see also \cite{M03} for an alternative proof). 
This result has been improved in dimension $n=2$ by Weiss \cite{W99}: using a epiperimetric-type approach, he has been able to prove that $\Sigma_1$ is locally contained in a $C^{1,\alpha}$ curve, for some universal exponent $\alpha>0$.
Along the same lines, in a recent paper Colombo, Spolaor, and Velichkov  \cite{CSV17} have obtained  a logarithmic epiperimetric inequality at singular points in any dimension $n\ge3$, thus improving the known $C^{1}$ regularity to a more quantitative $C^{1,\log^{\epsilon}}$ one.
\\

The aim of this paper is to improve the previous known results by showing that, up to the presence of some ``anomalous'' points of higher codimension,  singular points can be covered by $C^{1,1}$ (and in some cases $C^2$) manifolds. As we shall discuss in Remark \ref{rmk:optimal}, this result provides the optimal decay estimate for the contact set. In addition, anomalous points may exist and our bound on their Hausdorff dimension is optimal.

Before stating our result we note that, as a consequence of \cite{C98}, points in $\Sigma_0$ are isolated and $u$ is strictly positive in a neighborhood of them. In particular $u$ solves $\Delta u=1$ in a neighborhood of $\Sigma_0$, hence it is analytic there. Thus, it is enough to understand the structure of $\Sigma_m$ for $m=1,\ldots,n-1$.

Here and in the sequel, ${\rm dim}_{\mathcal H}(E)$ denotes the Hausdorff dimension of a set $E$ (see \eqref{eq:def dim} for a definition).
Our main result is the following:
\begin{theorem}
\label{thm:main}
Let $u \in C^{1,1}(B_1)$ be a solution of \eqref{obstaclepb}, and let $\Sigma:=\cup_{m=0}^{n-1}\Sigma_m$ denote the set of singular points. Then:
\begin{enumerate}
\item[($n=2$)] $\Sigma_1$ is locally  contained in a $C^{2}$ curve.
\item[($n\ge 3$)] \begin{enumerate}
\item
The higher dimensional stratum $\Sigma_{n-1}$ 
can be written as the disjoint union of ``generic points'' $\Sigma_{n-1}^g$ and ``anomalous points'' $\Sigma_{n-1}^a$, where:\\
- $\Sigma^g_{n-1}$ is locally contained in a $C^{1,1}$ $(n-1)$-dimensional manifold;\\
- $\Sigma^a_{n-1}$ is a relatively  open subset of $\Sigma_{n-1}$ satisfying  ${\rm dim}_{\mathcal H}(\Sigma^a_{n-1})\leq n-3$  (actually, $\Sigma^a_{n-1}$ is discrete when $n=3$).

Furthermore, $\Sigma_{n-1}$ can be locally covered by a $C^{1,\alpha_\circ}$  $(n-1)$-dimensional manifold, for some dimensional exponent $\alpha_\circ>0$. 
\item For all $m=1,\ldots,n-2$ we can write $\Sigma_{m}=\Sigma_m^g\cup\Sigma_g^a$,
where:\\
- $\Sigma_m^g$ can be locally covered by a $C^{1,1}$
$m$-dimensional manifold;\\
- $\Sigma^a_m$ is a relatively open subset of $\Sigma_{m}$ satisfying ${\rm dim}_{\mathcal H}(\Sigma^a_m)\leq m-1$ (actually, $\Sigma^a_m$ is discrete when $m=1$).

In addition, $\Sigma_{m}$ can be locally covered by a $C^{1,\log^{\epsilon_\circ}}$ $m$-dimensional manifold, for some dimensional exponent $\epsilon_\circ>0$. 
\end{enumerate}
\end{enumerate}
\end{theorem}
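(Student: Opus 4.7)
The plan is to upgrade the $C^{1}$ (Caffarelli) and $C^{1,\alpha}$/$C^{1,\log^{\epsilon}}$ (Weiss, Colombo--Spolaor--Velichkov) coverings of $\Sigma_m$ by producing a \emph{refined second-order expansion} of $u$ at each singular point, and then feeding the resulting family of polynomial jets into a Whitney-type extension theorem. The starting point is the known quantitative rate coming from Monneau monotonicity combined with the (log-)epiperimetric inequality, which yields, for every $x_\circ\in\Sigma_m$, a bound of the form $\|u(x_\circ+r\,\cdot)-p_{*,x_\circ}(r\,\cdot)\|_{L^2(B_1)}\leq C r^{2}\omega(r)$ with an explicit modulus $\omega$.

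The new key step is a \emph{second blow-up analysis.} Set
\[
w_{x_\circ,r}(x):=\frac{u(x_\circ+rx)-p_{*,x_\circ}(rx)}{\|u(x_\circ+r\,\cdot)-p_{*,x_\circ}(r\,\cdot)\|_{L^2(B_1)}},
\]
and show, using uniform $C^{1,1}$ estimates and the fact that on the non-coincidence set $w_{x_\circ,r}$ obeys a linear equation while on $L_{x_\circ}=\ker A_{x_\circ}$ the obstacle is felt only through the sign constraint, that a subsequence converges in $L^2(B_1)$ to a limit $q_{x_\circ}$. Classify these limits: they are $2$-homogeneous polynomials (possibly with an extra logarithmic factor in higher dimensions) harmonic on $\{p_{*,x_\circ}>0\}$, and their restriction to $L_{x_\circ}$ encodes the next-order decay of $u$. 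Declare $x_\circ$ to be \emph{generic} if $q_{x_\circ}|_{L_{x_\circ}}$ satisfies a uniform non-degeneracy condition (so that the expansion $u=p_{*,x_\circ}+q_{x_\circ}+o(|x-x_\circ|^{2+\alpha})$ really holds), and \emph{anomalous} otherwise. By definition the anomalous set is relatively open in $\Sigma_m$.

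On the generic stratum, combine the expansion with Monneau-type comparison between two nearby singular points $x_1,x_2\in\Sigma_m^g$ to verify the Whitney compatibility conditions
\[
|A_{x_1}-A_{x_2}|\leq C|x_1-x_2|,\qquad |p_{*,x_1}(x_2-x_1)+q_{x_1}(x_2-x_1)|=o(|x_1-x_2|^{2}),
\]
(with an $O(|x_1-x_2|^{2+\alpha})$ error in $n=2$, where the extra rigidity of Weiss's planar epiperimetric inequality is available). Plugging these jets into Whitney's extension theorem produces a $C^{1,1}$ function (respectively $C^{2}$ in $n=2$) whose zero level set contains $\Sigma_m^g$ and intersects it in an $m$-dimensional manifold. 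The weaker $C^{1,\alpha_\circ}$ / $C^{1,\log^{\epsilon_\circ}}$ covering of the full $\Sigma_m$ follows from the same Whitney scheme applied to the first-order jets $p_{*,x_\circ}$ alone, where only the modulus from Step 1 is needed.

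To control the anomalous set, perform a Federer-style dimension reduction on second blow-ups: at an anomalous point, the family of accumulation limits $q_{x_\circ}$ is invariant along some nontrivial subspace $V\subset L_{x_\circ}$, and a standard stratification-by-dimension-of-invariance argument, together with the already-proven $C^1$ structure of Caffarelli for the lower strata, gives $\mathrm{dim}_{\mathcal H}(\Sigma_m^a)\leq m-1$ (and $\leq n-3$ for $m=n-1$, using the additional restriction imposed by the codimension-one geometry). The main obstacle I expect is Step~2: extracting a useful linearised limit $q_{x_\circ}$ is delicate because the PDE degenerates exactly on $L_{x_\circ}$, so $w_{x_\circ,r}$ satisfies a Signorini-type rather than a purely linear problem, and identifying the correct non-degeneracy condition that cleanly separates generic from anomalous points is the core novelty.
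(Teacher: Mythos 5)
Your outline captures the right high-level architecture — second blow-ups, Whitney extension, Federer dimension reduction — and you correctly flag the Signorini structure as the crux, but there is a genuine gap at the very center of the argument that makes the scheme as written unworkable. The rate you quote in Step~1, coming from Monneau plus a (log-)epiperimetric inequality, is \emph{exactly} the $C^{1,\alpha}$ / $C^{1,\log^\epsilon}$ information of Weiss and Colombo--Spolaor--Velichkov; it does not by itself produce a canonical second blow-up limit $q_{x_\circ}$, nor does it tell you that such a limit is homogeneous. The missing tool is the \emph{Almgren frequency monotonicity} for $w=u-p$, $p\in\mathcal P$, which is the paper's main new discovery and is entirely absent from your outline. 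The monotonicity of $\phi(r,w)$ (proved via Weiss/Monneau, see Propositions~\ref{prop:Weiss}--\ref{lemAlmgren}) is what guarantees that $\tilde w_r := w_r/\|w_r\|_{L^2(\partial B_1)}$ has $W^{1,2}$-precompact family of rescalings whose limits $q$ are \emph{$\lambda_*$-homogeneous} where $\lambda_*:=\phi(0^+,w)$. Your claim that the blow-ups are ``$2$-homogeneous polynomials (possibly with a logarithmic factor)'' is false in general: for $m\le n-2$ the blow-ups are $\lambda_*$-homogeneous harmonic polynomials with $\lambda_*\in\{2,3,4,\dots\}$ (Proposition~\ref{propblowup}(a)), and for $m=n-1$ they are $\lambda_*$-homogeneous solutions of the Signorini problem with $\lambda_*\ge 2+\alpha_\circ$ (Proposition~\ref{propblowup}(b)). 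Without the frequency you have no mechanism to select the homogeneity, and in particular no way to set up the dimension reduction.

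Consequently, your notion of ``generic vs anomalous'' is misaligned with what is actually needed. The paper defines $\Sigma_m^a:=\{x_\circ\in\Sigma_m:\phi(0^+,u(x_\circ+\cdot)-p_{*,x_\circ})<3\}$, i.e.\ it is a sharp frequency threshold. Relative openness of $\Sigma_m^a$ is then an immediate consequence of upper semicontinuity of $x_\circ\mapsto\phi(0^+,\cdot)$, which follows from monotonicity in $r$. Your ``uniform non-degeneracy of $q_{x_\circ}|_{L_{x_\circ}}$'' goes in the wrong direction: at a generic point the second blow-up should give \emph{nothing new at order two} (because the frequency is at least $3$), whereas what characterizes anomalous points in lower strata ($m\le n-2$) is that $\lambda_*=2$ and the blow-up $q$ is a nontrivial second-order harmonic polynomial satisfying the rigid structure \eqref{howisD2q}. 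That rigidity — a strong linear-algebra constraint linking $D^2p_*$ and $D^2q$ — is precisely what makes the Federer reduction work (Lemmas~\ref{codge2}, \ref{lem12}), because it forces $q|_L$ to have a nondegenerate Hessian, hence $\{q=0\}\cap L$ has dimension $\le m-1$. Your ``invariance along a nontrivial subspace $V\subset L_{x_\circ}$'' is a generic cone-splitting slogan, but it is not what the actual lemmas prove; the mechanism here is the vanishing $q(y_\infty)=0$ coming from Monneau monotonicity applied to the \emph{accumulating} singular point (Lemma~\ref{codge2}), not translation invariance.

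For $m=n-1$ the bound $\dim_\HH(\Sigma_{n-1}^a)\le n-3$ requires a classification result about singular points of the Signorini problem (Theorem~\ref{FocSpa}, from Focardi--Spadaro), which you do not invoke. Also note that the $n=2$, $C^2$ assertion is not obtained by the Whitney scheme alone: it needs a further third-order Monneau-type almost-monotonicity (Lemma~\ref{lem:mon 3}) to establish uniqueness/continuity of the third-order blow-up $q_{*,x_\circ}$ (Proposition~\ref{prop:C2}), and only then does the Whitney argument give a $C^3$ extension and a $C^2$ curve. Finally, the $C^{1,\log^{\epsilon_\circ}}$ covering of $\Sigma_m$ for $m\le n-2$ is proved not by the ``same Whitney scheme on first-order jets'' but by combining the frequency dichotomy with Caffarelli's logarithmic semiconvexity estimate \eqref{eq:Caff conv}; the argument is genuinely different from CSV17 and reobtains their result as a corollary rather than assuming it as input.
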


\begin{remark}\label{rmk:optimal}
We first discuss the optimality of the above theorem.
\begin{enumerate}
\item
Our $C^{1,1}$ regularity provides the optimal control on the contact set in terms of the density decay. Indeed our result implies that, at all singular points up to a $(n-3)$-dimensional set (in particular at all singular points when $n=2$, and at all singular points up to a discrete set when $n=3$), the following bound holds:
$$
\frac{|\{u=0\}\cap B_r(x_\circ)|}{|B_r(x_\circ)|}\leq Cr\qquad \forall\,r>0
$$
(see Proposition \ref{decayest},  Definition \eqref{eq:def anomalous}, and Lemmas \ref{lem11}, \ref{lem12}, \ref{lem21}, and \ref{lem22}).
In view of the two dimensional Example 1 in \cite[Section 1]{Sch76}, this estimate is optimal.

\item The possible presence of anomalous points comes from different reasons depending on the dimension of the stratum. More precisely, as the reader will see from the proof (see also the description of the strategy of the proof given below), the following holds:
\begin{enumerate}
\item The possible presence of points in $\Sigma_{n-1}^a$ comes from the potential existence, in dimension $n\ge 3,$ of $\lambda$-homogeneous solutions to the Signorini problem with $\lambda\in(2,3)$. Whether this set is empty or not is an interesting open problem.
 \item The anomalous points in the strata $\Sigma^a_m$ for $m\le n-2$ come from the possibility that, around a singular point $x_\circ$, the function $(u-p_{*,x_\circ})|_{B_r(x_\circ)}$ behaves as $\varepsilon_r q$, where:\\
 - $\varepsilon_r$ is infinitesimal as $r\to 0^+$, but $\varepsilon_r\gg r^\alpha$ for any $\alpha>0$;\\
- $q$ is a nontrivial second order harmonic polynomial.\\
 Although this behavior may look strange, it can  actually happen and our estimate on the size of $\Sigma_m^a$ is optimal.
 Indeed, in the Appendix we construct examples of solutions for which ${\rm dim}(\Sigma_a^m)=m-1$. 
\end{enumerate}
\end{enumerate}
\end{remark}

We now make some general comments on Theorem \ref{thm:main}.
\begin{remark}
\label{rmk:main thm}
\begin{enumerate}
\item Our result on the higher dimensional stratum $\Sigma_{n-1}$ extends the result of \cite{W99} to every dimension, and improves it in terms of the regularity. Actually, as shown in Theorem \ref{thm:C2}, for any $m =1,\ldots,n-1$ we can cover $\Sigma_{m}$ with countably many $C^2$ $m$-dimensional manifolds,
up to a set of dimension at most $m-1$.
\item
The last part of the statement in the case $(n\geq 3)$-(b) was recently proved in \cite{CSV17}. Here we reobtain the same result as a simple consequence of our analysis (see the proof of Theorem \ref{thm:main}).
\item
As we shall see,
the higher regularity of the free boundary stated in the previous theorem comes with a higher regularity of the solution $u$ around singular points. More precisely, $\Sigma$ being of class $C^{k,\alpha}$ at some singular point $x_\circ$ corresponds to $u$ being of class $C^{k+1,\alpha}$ at such point.
\item 
The fact that $\Sigma_m^a$ is relatively open implies that if $x_\circ\in \Sigma_m^a$ then $B_\rho(x_\circ)\cap \Sigma_m^a=B_\rho(x_\circ)\cap \Sigma_m$ for $\rho>0$ small.
In particular
$\dim_\HH \big(B_\rho(x_\circ)\cap \Sigma_m\big)\le m-1
$ ($\leq n-3$ if $m=n-1$).
In other words, the whole stratum $\Sigma_m$ is lower dimensional near anomalous points.
\item[(5)] In \cite{Sak91,Sak93}, Sakai proved very strong structural results for the free boundary in dimension $n=2$. However, his results are very specific to the two dimensional case with analytic right hand side, as they rely on complex analysis techniques. On the other hand, all the results mentioned before \cite{C77,C98,W99,CSV17} are very robust and apply to more general right hand sides. Analogously, also our techniques are robust and can be extended to general right hand sides. In addition, our methods can be applied to the study of the regularity of the free boundary in the parabolic case (the so-called Stefan problem), a problem that cannot be studied with complex variable techniques even in dimension two.
 \end{enumerate}
 \end{remark}

\noindent
{\bf Strategy of the  proof of Theorem \ref{thm:main}.}
The idea of the proof is the following:
let $0$ be a singular free boundary point. As shown in \cite{C98,M03} $u$ is $C^2$ at 0, namely there exists a second order homogeneous polynomial $p_*$, with $D^2p_*\geq 0$ and $\Delta p_*=1$, such that $u(x)=p_*(x)+o(|x|^2)$. In order to obtain our result, our goal is to improve the convergence rate $o(|x|^2)$ into a quantitative bound of the form $O(|x|^{2+\gamma})$ for some $\gamma>0$.
In particular, to obtain $C^{1,1}$ regularity of the singular set we would like to show that $\gamma\geq 1$.

Using motononicity formulae due to Weiss and Monneau, we are able to prove that Almgren frequency function is monotone on $w:=u-p_*$ (this result came as a complete surprise to us, as the Almgren frequency formula has never been used in the classical obstacle problem). This allows us to perform blow-ups around $0$ by considering limits of 
$$
\tilde w_r(x):=\frac{w(rx)}{\|w(r\,\cdot\,)\|_{L^2(\partial B_1)}}\qquad \text{as $r\to 0$},
$$
and prove that if $\lambda_*$ is the value of the frequency at $0$ then $u(x)=p_*(x)+O(|x|^{\lambda_*})$. 
Although it is easy to see that $\lambda_*\geq 2$, it is actually pretty delicate ---and actually sometimes impossible--- to exclude that $\lambda_*=2$ (note that, in such a case, we would get no new informations with respect to what was already known). Hence our goal is to understand the possible value of $\lambda_*.$

To this aim, we consider $q$ a limit of $\tilde w_r$ and, exploiting the monotonicity of the frequency, we prove that $q\not \equiv 0$, $q$ is $\lambda_*$-homogeneous, and $q\Delta q\equiv 0$.

Then we distinguish between the two cases $m=n-1$ and $m\leq n-2$. While in the latter case we can prove that $q$ is harmonic (therefore $\lambda_* \in \{2,3,4,\ldots\}$), in the case $m=n-1$ we prove that $q$ is a solution of the so-called ``Signorini problem'' (see for instance \cite{AC04,ACS08}).
In particular, when $n=2$, this allows us to characterize all the possible values of $\lambda_*$ in dimension $2$ (as all global two-dimensional homogenous solutions are classified). Still, this does not exclude that $\lambda_*=2$.
As shown in Proposition \ref{propblowup} this can be excluded in the case $m=n-1$, while the examples constructed in the Appendix show that $\lambda_*$ may be equal to $2$ if $m\leq n-2$.
To circumvent this difficulty, a key ingredient in our analysis comes from Equation \eqref{howisD2q} which shows that, whenever
$\lambda_*=2$, some strong relation between $p_*$ and $q$ holds. Thus, our goal becomes to prove that this relation cannot hold at ``too many'' singular points.

In order to estimate the size of the set where $\lambda_*<3$, we first consider the low-dimension cases $n=2$ and $n=3$, and then we develop a Federer-type dimension reduction principle to handle the case $n \geq 4$. Note that the Federer dimension reduction principle is not standard in this setting, the reason being that if $x_0$ and $x_1$ are two different singular points, then the blow-ups at such points come from different functions, namely $u-p_{*,x_0}$ and $u-p_{*,x_1}$. Still, we can prove the validity of a dimension reduction-type principle allowing us to conclude that, at most points, $\lambda_*\geq 3$.
This proves the main part of the theorem.

Then, to show that $\Sigma_{n-1}$ is contained in a $C^{1,\alpha_\circ}$-manifolds we prove that $\lambda_*\geq 2+\alpha_\circ>2$ at all points in $\Sigma_{n-1}$. Also, the $C^{1,\log^{\epsilon_\circ}}$ regularity of $\Sigma_m$ for $m\leq n-2$ comes a simple consequence of our analysis combined with Caffarelli's asymptotic convexity estimate \cite{C77}.

Finally, the $C^2$ regularity in two-dimensions requires a further argument based on a new monotonicy formula of Monneau-type.
\\

The paper is organized as follows.

In Section \ref{sect:blow up} we  introduce some classical monotonicity quantities, as well as some variants of them that will play a crucial role in our analysis. In particular,
we prove the validity of a Almgren's monotonicy-type  formula.
Then, given a singular free boundary point $x_\circ$, we investigate the properties of the blow-ups of $u(x_\circ+\cdot)-p_{*,x_\circ}$. 

In Section \ref{sect:proof} we continue our analysis of
the possible homogeneities of the blow-ups and show the validity of a Federer-type reduction principle.
These results, combined with
the ones from Section \ref{sect:blow up}, allow us to prove Theorem \ref{thm:main} in dimensions $n\ge3$, as well as the $C^{1,1}$ regularity of $\Sigma_1$ in dimension $n=2$. The proof of the $C^2$ regularity of $\Sigma_1$ for $n=2$ is postponed to Section \ref{sect:C2}.

In the final Appendix we build solutions exhibiting anomalous points that show the sharpness of Theorem \ref{thm:main}(b).
\\

{\it Acknowledgments:} both authors are supported by ERC Grant ``Regularity and Stability in Partial Differential Equations (RSPDE)''.

\section{Notation, monotonicity formulae, and blow-ups}
\label{sect:blow up}
Let us denote
\begin{equation}
\label{eq:def M}
\mathcal M :=  \big\{ \mbox{symmetric $n\times n$ nonnegative definite matrices $B$  with ${\rm tr}\, B=1$}\big\} 
\end{equation} 
and
\[\mathcal P := \bigg\{ p(x) = { \frac 1 2} x\cdot Bx\  :\   B\in \mathcal M \bigg\}.\]
Given a singular free boundary point $x_\circ$, we denote 
\[ p_{*,x_\circ}(x) = \lim_{r\to 0} r^{-2}u(x_\circ+ rx)\]
(the existence of this limit is guaranteed by \cite{C98}, see also \cite{M03}). 
Note that $\Delta p_{*,x_\circ}\equiv 1$, hence $p_{*,x_\circ}\in \mathcal P$. When $x_\circ=0$, we will sometimes simplify the notation to $p_*$.

Throughout the paper we will assume that $u\not\equiv p_*$ in $B_1$, as otherwise Theorem \ref{thm:main} is trivial. 

\subsection{Weiss, Monneau, and Almgren frequency formula}

In this section we assume that $x_\circ=0$ is a singular point. The goal of the section is to prove that,
for any given $p\in \mathcal P,$
the Almgren frequency formula 
\[ 
 \phi(r,w) :=\frac{r^{2-n}\int_{B_r} |\nabla w|^2}{ r^{1-n}\int_{\partial B_r} w^2},\qquad
 w :=  
u-p, 
\]
is monotone nondecreasing in $r$.
(Note that, since by assumption $u \not\equiv p_*$, then $w := u-p\not\equiv 0$ for any $p \in \mathcal P$ and $\phi(r,w)$ is well defined.)

To this aim, we first recall the definition of the Weiss function
\[
W(r,u) := \frac{1}{r^{n+2}} \int_{B_r} \Bigl(|\nabla u|^2 +2u\Bigr)  -\frac{2}{r^{n+3}}\int_{\partial B_r} u^2. 
\]

\begin{proposition}[Weiss monotone function \cite{W99}] \label{prop:Weiss}
If $0$ is a singular point then
\[
\frac{d}{dr} W(r,u) \ge 0
\]
and
\[
W(0^+, u) = \frac{\mathcal H^{n-1}(\partial B_1)}{2n(n+2)}=W(r,p)\qquad \forall\,p \in \mathcal P,\,\forall \,r>0.
\]
\end{proposition}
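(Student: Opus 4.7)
The plan is to exploit the $2$-homogeneous rescaling $u_r(x):=r^{-2}u(rx)$, which still solves $\Delta u_r=\chi_{\{u_r>0\}}$ in $B_{1/r}$. A change of variables in each of the three integrals defining $W$ gives the scaling identity $W(r,u)=W(1,u_r)$, so proving monotonicity reduces to showing $\frac{d}{dr}W(1,u_r)\ge 0$ on the fixed domain $B_1$.

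Writing $\dot u_r:=\tfrac{d}{dr}u_r$, one checks the pointwise identity $\dot u_r(x)=\tfrac{1}{r}\bigl(x\cdot\nabla u_r(x)-2u_r(x)\bigr)$. Differentiating $W(1,u_r)$ term by term and integrating by parts in the gradient term gives
\[
\frac{d}{dr}W(1,u_r)=2\int_{B_1}\bigl(1-\Delta u_r\bigr)\dot u_r+2\int_{\partial B_1}\bigl(\partial_\nu u_r-2u_r\bigr)\dot u_r.
\]
Since $1-\Delta u_r=\chi_{\{u_r=0\}}$ and the $C^{1,1}$ regularity of $u$ forces $u_r=|\nabla u_r|=0$ almost everywhere on the coincidence set, the bulk integrand vanishes; meanwhile $\dot u_r=\tfrac{1}{r}(\partial_\nu u_r-2u_r)$ on $\partial B_1$, so the two boundary factors coincide and
\[
\frac{d}{dr}W(r,u)=\frac{2}{r}\int_{\partial B_1}\bigl(\partial_\nu u_r-2u_r\bigr)^2\ge 0,
\]
which is the desired monotonicity.

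To identify the constant, $2$-homogeneity of $p\in\mathcal P$ gives $p_r\equiv p$, so $W(r,p)=W(1,p)$ for every $r>0$. For $p(x)=\tfrac12 x\cdot Bx$ with $\tr B=1$, integration by parts together with the Euler relation $\partial_\nu p=2p$ on $\partial B_1$ collapses the Weiss energy to $W(1,p)=\int_{B_1}p$, and the second-moment computation $\int_{B_1}x_ix_j=\tfrac{|\mathbb S^{n-1}|}{n(n+2)}\delta_{ij}$ yields $\int_{B_1}p=\tfrac{\tr B}{2}\cdot\tfrac{|\mathbb S^{n-1}|}{n(n+2)}=\tfrac{\mathcal H^{n-1}(\partial B_1)}{2n(n+2)}$, independently of $B$. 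The value $W(0^+,u)$ is obtained by passing to the limit $r\to 0^+$ in $W(r,u)=W(1,u_r)$, using the known convergence of singular blow-ups $u_r\to p_*$ from \cite{C98,M03}, which is strong enough (in $W^{1,2}(B_1)\cap C^0(\overline{B_1})$) to pass inside the three integrals.

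The main technical point is the vanishing of the bulk term in the monotonicity identity: because $\chi_{\{u_r>0\}}$ is not smooth in $r$, the formal differentiation above needs justification. A standard remedy is to regularize $\chi_{\{u_r>0\}}$ by $\beta_\varepsilon(u_r)$, where $\beta_\varepsilon$ is a smooth approximation of the Heaviside function, carry out the differentiation on the approximations, and then send $\varepsilon\to 0$, using the fact (from Caffarelli's theory) that $\nabla u_r=0$ a.e.\ on $\{u_r=0\}$.
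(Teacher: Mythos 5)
The paper does not prove Proposition \ref{prop:Weiss}; it is quoted verbatim from Weiss \cite{W99}, so there is no in-paper argument to compare against. Your reconstruction is correct and is essentially Weiss's original argument: rescale so that $W(r,u)=W(1,u_r)$ with $u_r(x)=r^{-2}u(rx)$, differentiate in $r$, integrate by parts, note that the bulk term $(1-\Delta u_r)\dot u_r=\chi_{\{u_r=0\}}\dot u_r$ vanishes because $u_r=\nabla u_r=0$ on $\{u_r=0\}$ (this holds everywhere on the contact set, not merely a.e., by $C^{1,1}$ regularity and nonnegativity), and observe that the boundary term is a perfect square. The computation of the constant via $W(1,p)=\int_{B_1}p$ and the second-moment identity is also right, and passing to the limit using $u_r\to p_*$ (in $C^{1,\alpha}_{\rm loc}$, thanks to the uniform $C^{1,1}$ bound and uniqueness of the blow-up from \cite{C98,M03}) gives $W(0^+,u)=W(1,p_*)$. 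One small remark: the regularization by $\beta_\varepsilon$ that you propose at the end is not really needed. Since $u\in C^{1,1}$, the map $r\mapsto u_r$ is Lipschitz into $C^1(\overline{B_1})$, so $W(1,u_r)$ is Lipschitz in $r$ and the differentiation under the integral sign, as well as the integration by parts against the bounded function $\Delta u_r$, are already justified without smoothing.
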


%The value (or limit) at $0^+$ of the Weiss function distinguishes between singular points and regular points.

%Here and throughout the paper, given a $n\times n$ symmetric matrix $B$ we denote 
%\[p_B(x) := \frac 1 2 x \cdot Bx.\] 

To prove the monotonicity of $\phi$ we will use several times the following observation:
\begin{remark}\label{remsign} Since  $\Delta u = \Delta p =1$  in $\{u>0\}$, we have
\[
w\Delta w = 
\begin{cases} 
0  							&\mbox{in } \{u>0\}
\\
p\Delta p = p\ge 0 \quad 	&\mbox{in } \{u=0\}.
\end{cases}
\]
A short way to write this is
\begin{equation}\label{wLapw}
w\Delta w = p\chi_{\{u=0\}} \ge 0.
\end{equation}
\end{remark}

We also need the following auxiliary result, that is essentially due to Monneau \cite{M03}.
\begin{lemma}\label{lemWeiss}
Let $0$ be a singular point, $p \in \mathcal P$, and $w:=u-p$. Then
\begin{equation}
\label{eq:mon1}
\frac{1}{r^{n+2}}\int_{B_r} |\nabla w|^2\geq \frac{2}{r^{n+3}}\int_{\partial B_r} w^2
\end{equation}
and
\begin{equation}
\label{eq:mon2}
 \frac{1}{r^{n+3}} \int_{\partial B_r} w( x\cdot\nabla w-2w) \ge \frac{1}{r^{n+2}}\int_{B_r} w\Delta w\ge 0 
\end{equation}
for all $r>0$.
\end{lemma}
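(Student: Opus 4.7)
The plan is to realize both inequalities as rewritings of the Weiss monotonicity statement from \pref{Weiss}. Write $u = p + w$ and expand
\[
W(r,u) = \frac{1}{r^{n+2}}\int_{B_r}\bigl(|\nabla p|^2+2\nabla p\cdot\nabla w+|\nabla w|^2+2p+2w\bigr) -\frac{2}{r^{n+3}}\int_{\partial B_r}(p^2+2pw+w^2).
\]
Since $p\in\mathcal P$ is 2-homogeneous with $\Delta p=1$, we have $x\cdot\nabla p=2p$, so on $\partial B_r$ the normal derivative satisfies $\partial_\nu p=2p/r$. Integrating the cross term by parts,
\[
\int_{B_r}\nabla p\cdot\nabla w = \int_{\partial B_r} w\,\partial_\nu p-\int_{B_r}w\,\Delta p=\frac{2}{r}\int_{\partial B_r}pw-\int_{B_r}w.
\]
Substituting this back and using the identity $W(r,p)=\frac{1}{r^{n+2}}\int_{B_r}(|\nabla p|^2+2p)-\frac{2}{r^{n+3}}\int_{\partial B_r}p^2$, the cross terms involving $pw$ and the linear terms $\int_{B_r}w$ cancel, leaving the clean identity
\[
W(r,u)-W(r,p)=\frac{1}{r^{n+2}}\int_{B_r}|\nabla w|^2-\frac{2}{r^{n+3}}\int_{\partial B_r}w^2.
\]
Since $0$ is a singular point, \pref{Weiss} gives $W(r,u)\ge W(0^+,u)=W(r,p)$, which is exactly \eqref{eq:mon1}.

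For \eqref{eq:mon2}, I apply the divergence theorem to $w\nabla w$ on $B_r$:
\[
\int_{B_r}|\nabla w|^2+\int_{B_r}w\,\Delta w=\int_{\partial B_r}w\,\partial_\nu w=\frac{1}{r}\int_{\partial B_r}w\,(x\cdot\nabla w).
\]
Multiplying by $r^{-(n+2)}$ and rearranging yields
\[
\frac{1}{r^{n+3}}\int_{\partial B_r}w(x\cdot\nabla w-2w)-\frac{1}{r^{n+2}}\int_{B_r}w\,\Delta w=\frac{1}{r^{n+2}}\int_{B_r}|\nabla w|^2-\frac{2}{r^{n+3}}\int_{\partial B_r}w^2,
\]
whose right-hand side is nonnegative by \eqref{eq:mon1} already proved. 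This establishes the first inequality of \eqref{eq:mon2}, and the second inequality $\int_{B_r}w\,\Delta w\ge 0$ is the pointwise bound \eqref{wLapw} from \rref{remsign} integrated.

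There is no real obstacle here: the entire argument is a careful integration by parts that exploits the 2-homogeneity $x\cdot\nabla p=2p$ and $\Delta p=1$ to collapse the cross terms, plus the sign information on $w\Delta w$ coming from the obstacle equation. The only place to be slightly careful is to verify that the $pw$-boundary terms and the $\int_{B_r}w$-volume terms indeed match up and cancel between $W(r,u)$ and $W(r,p)$, which follows precisely from the homogeneity identity $\partial_\nu p=2p/r$ on $\partial B_r$.
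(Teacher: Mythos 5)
Your proof is correct and follows essentially the same route as the paper: expanding $W(r,u)-W(r,p)$, integrating the cross term $\int_{B_r}\nabla p\cdot\nabla w$ by parts and invoking $\Delta p=1$ and the $2$-homogeneity identity $x\cdot\nabla p=2p$ to collapse everything down to the clean identity, then deducing \eqref{eq:mon1} from Weiss' monotonicity, and finally obtaining \eqref{eq:mon2} from the divergence theorem applied to $w\nabla w$ together with \eqref{eq:mon1} and the sign \eqref{wLapw}. The only cosmetic difference is bookkeeping: you substitute the integrated-by-parts cross term back explicitly, whereas the paper regroups the boundary terms into $\int_{\partial B_r}w(x\cdot\nabla p-2p)$ before noting it vanishes.
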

\begin{proof}
Since  $W(0^+,u) =  W(r,p)$ for all $r>0$ (see Proposition \ref{prop:Weiss}) and $\Delta p\equiv 1$, we have
\[
\begin{split}
0 &\le W(r,u) -W(0^+,u) 
=W(r,u) - W(r,p)
\\
&=\frac{1}{r^{n+2}}\int_{B_r} \Bigl(|\nabla w|^2 + 2\nabla w\cdot \nabla p +2w\Bigr) -\frac{2}{r^{n+3}}\int_{\partial B_r} \Bigl(w^2 + 2w p\Bigr)
\\
&= \frac{1}{r^{n+2}}\int_{B_r} |\nabla w|^2  -\frac{2}{r^{n+3}}\int_{\partial B_r} w^2 + \frac{2}{r^{n+3}}\int_{\partial B_r}   w(x\cdot\nabla p -2p)
\\
&= \frac{1}{r^{n+2}}\int_{B_r} |\nabla w|^2 -\frac{2}{r^{n+3}}\int_{\partial B_r} w^2 ,
\end{split}
\]
where we used that $p$ is $2$-homogeneous (hence $x\cdot \nabla p=2p$). This proves \eqref{eq:mon1}.

Now, since
$$
\frac{1}{r^{n+2}}\int_{B_r} |\nabla w|^2=
 \frac{1}{r^{n+2}}\int_{B_r} - w\Delta w + \frac{1}{r^{n+3}}\int_{\partial B_r} w\,x\cdot \nabla w,
$$
\eqref{eq:mon2} follows from \eqref{eq:mon1}
and \eqref{wLapw}.
\end{proof}

We can now state and prove the monotonicity of the Almgren frequency function.
We remark that the fact that $\phi$ is monotone for all $p \in \mathcal P$ (and not only with $p=p_*$) will be crucial in the proof of Theorem \ref{thm:main}.

\begin{proposition}[Almgren frequency formula]\label{lemAlmgren}
Let $0$ be a singular point, $p \in \mathcal P$, and $w:= u-p$. Then
\[
\frac{d}{dr} \log \phi(r,w) \ge\frac 2 r  \frac{
% r^{4-n} \big\{\int_{\partial B_r} w_\nu^2\int_{\partial B_r} w^2  -\big(\int_{\partial B_r}ww_\nu\big)^2 \big\} + 
\left(r^{2-n} \int_{B_r} w\Delta w\right)^2 }{ r^{2-n} \int_{B_r}|\nabla w|^2  \ r^{1-n} \int_{\partial B_r} w^2 }\ge 0.
\]
\end{proposition}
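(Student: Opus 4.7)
The plan is to compute $\frac{d}{dr}\log\phi(r,w)$ using the standard first‐variation identities for $H(r):=\int_{\partial B_r} w^2$ and $D(r):=\int_{B_r}|\nabla w|^2$, together with a Rellich--Pohozaev identity, and then to extract the claimed lower bound using Cauchy--Schwarz on $\partial B_r$, the sign information $w\Delta w\ge 0$ from Remark~\ref{remsign}, and the estimate $rD(r)\ge 2H(r)$ from Lemma~\ref{lemWeiss}.

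First I would set $M(r):=\int_{B_r} w\Delta w$, so that $\phi(r,w) = rD(r)/H(r)$ and $\frac{d}{dr}\log\phi = \tfrac{1}{r} + \tfrac{D'}{D} - \tfrac{H'}{H}$. The usual polar computation yields $H'(r) = \tfrac{n-1}{r}H(r) + 2\int_{\partial B_r} w w_\nu$, while the divergence theorem gives $\int_{\partial B_r} w w_\nu = D(r) + M(r)$. For $D'$ I would invoke the Rellich--Pohozaev identity
\[
r\int_{\partial B_r}|\nabla w|^2 = 2r\int_{\partial B_r}(w_\nu)^2 - 2\int_{B_r} (x\cdot\nabla w)\Delta w + (n-2)D(r).
\]
The key observation --- specific to the obstacle problem --- is the pointwise identity $(x\cdot\nabla w)\Delta w = 2\,w\Delta w$. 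Indeed, $\Delta w\equiv 0$ on $\{u>0\}$, while on $\{u=0\}$ we have $\nabla u=0$, so $\nabla w=-\nabla p$, and since $p$ is $2$-homogeneous, $x\cdot\nabla w=-2p=2w$, so both sides equal $2p$ there. Consequently $D'(r) = 2\int_{\partial B_r}(w_\nu)^2 - \tfrac{4M(r)}{r} + \tfrac{n-2}{r}D(r)$.

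Combining these identities, I expect the dimensional terms to cancel and leave
\[
\frac{d}{dr}\log\phi(r,w) = \frac{2\int_{\partial B_r}(w_\nu)^2}{D(r)} - \frac{4M(r)}{r D(r)} - \frac{2(D(r)+M(r))}{H(r)}.
\]
Now I apply Cauchy--Schwarz on the sphere: $(D+M)^2 = \bigl(\int_{\partial B_r} w\,w_\nu\bigr)^2 \le H \int_{\partial B_r}(w_\nu)^2$, so $\tfrac{2\int(w_\nu)^2}{D}\ge \tfrac{2(D+M)^2}{DH} = \tfrac{2D}{H}+\tfrac{4M}{H}+\tfrac{2M^2}{DH}$. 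Substituting and simplifying collapses most terms and yields
\[
\frac{d}{dr}\log\phi(r,w) \ge \frac{2M(r)^2}{D(r)H(r)} + 2M(r)\left(\frac{1}{H(r)}-\frac{2}{rD(r)}\right).
\]

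Finally, Remark~\ref{remsign} gives $M(r)\ge 0$, and the inequality $rD(r)\ge 2H(r)$ in \eqref{eq:mon1} of Lemma~\ref{lemWeiss} forces the parenthesized factor to be nonnegative, so the second term is $\ge 0$. A direct rescaling check shows that the first term equals exactly $\tfrac{2}{r}\bigl(r^{2-n}M\bigr)^2\big/\bigl(r^{2-n}D\cdot r^{1-n}H\bigr)$, which is the bound in the statement. I expect the main obstacle to be the clean derivation of the Rellich identity together with the pointwise identity $(x\cdot\nabla w)\Delta w = 2\,w\Delta w$; everything else is an algebraic rearrangement feeding on the two ingredients already prepared (Cauchy--Schwarz and Lemma~\ref{lemWeiss}).
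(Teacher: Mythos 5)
Your proof is correct and takes essentially the same route as the paper: the Rellich--Pohozaev computation of $D'$ is exactly the integration by parts the paper carries out, the pointwise identity $(x\cdot\nabla w)\Delta w = 2\,w\Delta w$ is the same observation that lets the paper rewrite everything in terms of $I=\int_{B_1\cap\{u=0\}}p$, and after applying Cauchy--Schwarz your intermediate lower bound $\frac{2M(M+D-2H)}{DH}$ (at $r=1$) is algebraically identical to the paper's $\frac{2I\int_{\partial B_1}w(w_\nu-2w)}{DH}$. The only cosmetic difference is that you invoke \eqref{eq:mon1} of Lemma~\ref{lemWeiss} together with $M\ge 0$ to dispose of the cross term, whereas the paper packages the same two facts into \eqref{eq:mon2} and applies that directly.
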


\begin{proof}[Proof of Proposition \ref{lemAlmgren}]
Let us introduce the adimensional quatities
\[D(r) := r^{2-n} \int_{B_r} |\nabla w|^2 = r^2\int_{B_1} |\nabla w|^2(r\,\cdot\,),$$
$$
H(r) := r^{1-n}\int_{\partial B_r} w^2 = \int_{\partial B_1} w^2(r\, \cdot\,),\]
so that $\phi=D/H$.
By scaling it is enough to compute the derivative  of $\phi$ at $r=1$ and prove that it is nonnegative. 

Using lower indices to denote partial derivatives (so $w_i=\partial_{x_i}w,$ $w_{ij}=\partial^2_{x_ix_j}w$, etc.),
we have
\begin{equation}\label{zero}
\frac{d}{dr} \log \phi = \frac{D'}{D}- \frac{H'}{H} 
\end{equation}
where 
\begin{equation}\label{first}
\begin{split}
D'(1) &=  \sum_{i,j} \int_{B_1} 2w_i x_j  w_{ij}  + 2D(1)
\\
&= \sum_{i,j}\int_{\partial B_1}2w_i x_j w_{j} \nu_i  -\sum_{i,j} \int_{B_1}  2(w_{i}x_j)_i w_j +2D(1)
\\
&= 2\int_{\partial B_1} w_\nu ^2 - 2\int_{B_1} \Delta w \,(x\cdot \nabla w) - 2\int_{B_1} |\nabla w|^2 +2D(1)
\\
&= 2\int_{\partial B_1} w_\nu ^2 - 2\int_{B_1} \Delta w \,(x\cdot \nabla w)
\\
&= 2\int_{\partial B_1} w_\nu ^2 - 2\int_{B_1\cap\{u=0\}} \,(x\cdot \nabla p)
\\
&= 2\int_{\partial B_1} w_\nu ^2 - 4\int_{B_1\cap\{u=0\}}  p.
\end{split}
\end{equation}
Here we used that $x\cdot \nabla w|_{\partial B_1}=w_\nu|_{\partial B_1}$ is the outer normal derivative, $\Delta w=-\chi_{\{u=0\}}$, and $x\cdot \nabla p=2p$ (since $p$ is $2$-homogeneous).

On the other hand, recalling \eqref{wLapw}, we have
\begin{equation}\label{secon}
\int_{\partial B_1}w w_\nu = \int_{B_1} w\Delta w + \int_{B_1}|\nabla w|^2
=\int_{B_1\cap\{u=0\}}  p + \int_{B_1}|\nabla w|^2,
\end{equation}
therefore
\begin{equation}\label{third}
\begin{split}
H'(1) = 2\int_{\partial B_1}  w w_\nu &=  2\int_{B_1\cap\{u=0\}}  p + 2\int_{B_1}|\nabla w|^2.
\end{split}
\end{equation}
Hence, combining \eqref{first}, \eqref{secon}, \eqref{third}, and \eqref{zero}, and denoting 
\[I:=\int_{B_1}w\Delta w = \int_{B_1\cap\{u=0\}}  p \ge 0,\]
we obtain
\[
\begin{split}
\frac{d}{dr} \log \phi(1,w) &=  2\left(\frac{\int_{\partial B_1} w_\nu^2 -2I}{\int_{B_1} |\nabla w|^2} - \frac{\int_{\partial B_1}ww_\nu}{\int_{\partial B_1} w^2} \right)
\\
&= 2\, \frac{\big(\int_{\partial B_1} w_\nu^2 -2I \big)\int_{\partial B_1} w^2 - \int_{\partial B_1}ww_\nu \big(\int_{\partial B_1}ww_\nu -I\big)  }{\int_{B_1} |\nabla w|^2 \int_{\partial B_1} w^2} 
\\
&= 2\, \frac{\big\{\int_{\partial B_1} w_\nu^2\int_{\partial B_1} w^2  -\big(\int_{\partial B_1}ww_\nu\big)^2 \big\}  + I \int_{\partial B_1}w(w_\nu-2w) }{\int_{B_1} |\nabla w|^2 \int_{\partial B_1} w^2} 
%\\
%&\ge 2\, \frac{ I \,\int_{\partial B_1}w(w_\nu-2w)   }{\int_{B_1} |\nabla w|^2 \int_{\partial B_1} w^2}.
\end{split}
\]
Observe that the first term inside the brackets is nonnegative by the Cauchy-Schwartz inequality. Also, recalling \eqref{eq:mon2}, we have that
\[
 \int_{\partial B_1} w( w_\nu-2w) \ge \int_{B_1} w\Delta w =I.
\]
Since $I\geq 0$, the result follows.
\end{proof}

Note that, because $r\mapsto\phi(r,w)$ is monotone nondecreasing, it must have a limit as $r\downarrow 0.$
The first observation is that this limit is at least $2$.

\begin{lemma}\label{lemblowup0}
Let $0$ be a singular point, $p\in \mathcal P$, and $w:= u-p$. 
Then $\phi(0^+,w) \geq 2$.
\end{lemma}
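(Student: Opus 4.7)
The claim follows essentially immediately from the inequality \eqref{eq:mon1} in Lemma \ref{lemWeiss}. My plan is simply to observe that \eqref{eq:mon1} is, up to a scale factor of $r^4$, precisely the pointwise statement $\phi(r,w)\geq 2$, and then let $r\to 0^+$.

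More concretely, I would multiply both sides of \eqref{eq:mon1} by $r^4$. Since $r^4\cdot r^{-(n+2)}=r^{2-n}$ and $r^4\cdot r^{-(n+3)}=r^{1-n}$, the inequality becomes
\[
r^{2-n}\int_{B_r}|\nabla w|^2\;\geq\;2\,r^{1-n}\int_{\partial B_r}w^2 \qquad \forall\,r>0.
\]
Because $u\not\equiv p$ and $p\in\mathcal P$, we have $w\not\equiv 0$, so $\int_{\partial B_r}w^2>0$ for all sufficiently small $r>0$ (in fact, by unique continuation, for every $r>0$ small; but even without that, we have the inequality at every $r$ where the denominator is positive, which is a set of $r$'s accumulating at $0$ since otherwise $w$ would vanish on a neighborhood of $0$ contradicting $w\not\equiv 0$ and the real-analyticity of $u$ on $\{u>0\}$ together with Remark \ref{remsign}). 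Dividing,
\[
\phi(r,w)=\frac{r^{2-n}\int_{B_r}|\nabla w|^2}{r^{1-n}\int_{\partial B_r}w^2}\;\geq\;2.
\]

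Finally, by Proposition \ref{lemAlmgren} the function $r\mapsto\phi(r,w)$ is monotone nondecreasing, hence the limit $\phi(0^+,w):=\lim_{r\downarrow 0}\phi(r,w)$ exists. Passing to the limit in the pointwise bound $\phi(r,w)\geq 2$ yields $\phi(0^+,w)\geq 2$, as desired.

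The only subtle point is ensuring $\phi(r,w)$ is well-defined (i.e.\ the denominator does not vanish) for a sequence of radii $r\downarrow 0$; this is guaranteed by the standing assumption $u\not\equiv p_*$ in $B_1$, which together with the equation satisfied by $u$ forces $w=u-p\not\equiv 0$ in any neighborhood of $0$. So there is really no obstacle: the lemma is a reformulation of \eqref{eq:mon1}.
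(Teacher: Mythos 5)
Your proof is correct and follows essentially the same route as the paper: the paper's proof is the single line that \eqref{eq:mon1} is equivalent to $\phi(r,w)\geq 2$ for all $r>0$, which is exactly your observation after multiplying by $r^4$; the limit exists by the monotonicity established in Proposition \ref{lemAlmgren}, and passing to the limit gives the claim. Your digression on the denominator being nonzero is a valid (if slightly over-elaborate) concern, but the paper already disposes of it in a parenthetical remark when $\phi(r,w)$ is first introduced, so it is not a gap in either argument.
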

\begin{proof}
It suffices to observe that \eqref{eq:mon1} is equivalent to $\phi(r,w)\geq 2$ for all $r>0$.
\end{proof}

A first classical consequence of the frequency formula is the following monotonicity formula:
\begin{lemma}\label{Hincreasing}
Let $0$ be a singular point, $p\in \mathcal P$, and $w:= u-p$. 
Given $\lambda >0$ denote
\[
H_\lambda(r,w) := \frac{1}{r^{n-1+2\lambda}} \int_{\partial B_r} w^2
\]
Then the function
$r\mapsto H_{\lambda}(r,w)$ is nondecreasing
 for all $0\le \lambda \le  \phi(0^+,w)$.
\end{lemma}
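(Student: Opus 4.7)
Write $H(r) := r^{1-n}\int_{\partial B_r} w^2$ and $D(r):=r^{2-n}\int_{B_r}|\nabla w|^2$ as in the proof of Proposition \ref{lemAlmgren}, so that $H_\lambda(r,w) = r^{-2\lambda} H(r)$ and $\phi(r,w) = D(r)/H(r)$. Taking logarithmic derivatives,
\[
\frac{d}{dr}\log H_\lambda(r,w) = -\frac{2\lambda}{r} + \frac{H'(r)}{H(r)},
\]
so the statement reduces to a lower bound on $H'(r)/H(r)$.

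The plan is to express $H'(r)$ in terms of $\phi$ and a nonnegative remainder. Using the standard identity $\frac{d}{dr}\int_{\partial B_r} f = \int_{\partial B_r}\partial_\nu f + \frac{n-1}{r}\int_{\partial B_r} f$, direct differentiation of $H(r) = r^{1-n}\int_{\partial B_r} w^2$ gives
\[
H'(r) = 2\,r^{1-n}\int_{\partial B_r} w\,w_\nu.
\]
Integration by parts rewrites this as $H'(r) = \frac{2}{r}\bigl(D(r) + r^{2-n}\int_{B_r} w\,\Delta w\bigr)$. Now invoke the key sign information \eqref{wLapw}, namely $w\,\Delta w = p\,\chi_{\{u=0\}}\ge 0$, to discard the second term and obtain
\[
\frac{H'(r)}{H(r)} \ge \frac{2}{r}\,\frac{D(r)}{H(r)} = \frac{2\,\phi(r,w)}{r}.
\]

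Plugging this into the previous display yields
\[
\frac{d}{dr}\log H_\lambda(r,w) \ge \frac{2}{r}\bigl(\phi(r,w)-\lambda\bigr).
\]
By Proposition \ref{lemAlmgren}, $\phi(\cdot,w)$ is monotone nondecreasing, hence $\phi(r,w)\ge \phi(0^+,w)\ge \lambda$ for all $r>0$, and the right-hand side is nonnegative. Therefore $H_\lambda(\cdot,w)$ is nondecreasing, which is the claim. There is no real obstacle here: the argument is entirely classical (it is the standard derivation of Almgren-type height estimates) and the only two inputs are the pointwise sign $w\Delta w\ge 0$, already proved in Remark \ref{remsign}, and the monotonicity of the frequency, just established in Proposition \ref{lemAlmgren}.
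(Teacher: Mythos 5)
Your proof is correct and is essentially the same argument as the paper's: both compute $H'(r)/H(r)$, use the divergence theorem to write the boundary term $\int_{\partial B_r} w w_\nu$ as $D(r)$ plus $\int_{B_r} w\Delta w$, discard the latter using the sign $w\Delta w\ge 0$ from \eqref{wLapw}, and then invoke the monotonicity of $\phi$ from Proposition \ref{lemAlmgren} to conclude $\phi(r,w)\ge\lambda$. The only cosmetic difference is that the paper carries out the computation after rescaling to $w_r(x)=(u-p)(rx)$ on the unit ball, whereas you differentiate $H(r)$ directly; the two are equivalent.
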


\begin{proof}
Denoting
\[w_r(x) := (u-p)(rx).\] we have
\[
\frac{H'_{\lambda}} {H_{\lambda}} (r,w) = \frac{ 2r^{-2\lambda} \int_{\partial B_1} w_r(x) \big(x\cdot \nabla w(rx)\big) -2\lambda r^{-2\lambda-1}  \int_{\partial B_1} w_r ^2 }{ r^{-2\lambda} \int_{\partial B_1} w_r ^2}.
\] 
Using that
\[ 
r\int_{\partial B_1} w_r(x) \big(x\cdot \nabla w(rx)\big)=\int_{\partial B_1} w_r (x\cdot \nabla w_r)  = \int_{B_1} |\nabla w_r|^2 + \int_{B_1} w_r\Delta w_r
\]
and  that $w_r\Delta w_r\ge 0$ (recall \eqref{wLapw}), we obtain 
\[
\frac{H'_{\lambda}} {H_{\lambda}}(r,w) \ge \frac{2\int_{B_1} |\nabla w_r|^2}{r\int_{\partial B_1} w_r ^2 }  - \frac{2\lambda}{r}  = \frac{2}{r} \big(\phi(r,w)-\lambda\big).
\] 
Since $\phi(r,w)\geq \phi(0^+,w) \geq \lambda$ (by Proposition \ref{lemAlmgren}), the result follows.
\end{proof}

\begin{corollary}[Monneau monotonicity formula \cite{M03}]\label{monneau}
Let $0$ be a singular point and let $H_\lambda$ be as in Lemma \ref{Hincreasing}.
The function $H_2(r, u-p)$ is monotone nondecreasing  in $r$, for all $p$ in $\mathcal P$.
\end{corollary}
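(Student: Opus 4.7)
The plan is to derive Corollary \ref{monneau} as a direct consequence of the two preceding results: Lemma \ref{Hincreasing} (the monotonicity of $H_\lambda$ whenever $\lambda \leq \phi(0^+,w)$) and Lemma \ref{lemblowup0} (which states that $\phi(0^+, u-p) \geq 2$ for every $p \in \mathcal P$ when $0$ is singular). The main observation is that the Monneau functional $H_2(r, u-p)$ is precisely the borderline case $\lambda = 2$ of the family $H_\lambda$ from Lemma \ref{Hincreasing}.

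Concretely, I would fix an arbitrary $p \in \mathcal P$ and set $w := u-p$. Since by assumption $u \not\equiv p_*$ and our use of Almgren's frequency was for $w \not\equiv 0$, we first note that if $w \equiv 0$ there is nothing to prove, so we may assume $w \not\equiv 0$, so that $\phi(r,w)$ is well-defined. By Lemma \ref{lemblowup0} we have $\phi(0^+,w) \geq 2$. In particular the choice $\lambda = 2$ satisfies $0 \leq \lambda \leq \phi(0^+,w)$, so Lemma \ref{Hincreasing} applies and gives
\[
\frac{d}{dr} H_2(r, u-p) \geq 0
\]
for all $r>0$ (or at least in the distributional sense implicit in the proof of Lemma \ref{Hincreasing}, which is what is needed to conclude monotonicity). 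This is exactly the conclusion of the corollary.

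There is essentially no obstacle here: the real work was done upstream, in establishing (i) the Almgren frequency monotonicity for $w = u-p$ with arbitrary $p \in \mathcal P$ (Proposition \ref{lemAlmgren}), which crucially used $w\Delta w \geq 0$, and (ii) the lower bound $\phi(0^+,w) \geq 2$ from the Weiss monotonicity (Lemma \ref{lemblowup0}). Once those two facts are in hand, Monneau's formula is merely the specialization $\lambda = 2$ of the already-proved family $H_\lambda$. I would therefore present the proof in a few lines as a corollary, emphasizing that the freedom to choose any $p \in \mathcal P$ (not just $p = p_*$) is inherited from the corresponding freedom in Proposition \ref{lemAlmgren} and Lemma \ref{Hincreasing}, and will be essential later in the analysis of blow-ups of $u - p_{*,x_\circ}$.
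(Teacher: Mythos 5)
Your proof is correct and matches the paper's argument exactly: the paper also states that Corollary~\ref{monneau} is ``a direct consequence of Lemmas \ref{Hincreasing} and \ref{lemblowup0},'' i.e.\ apply Lemma~\ref{Hincreasing} with $\lambda=2\le\phi(0^+,u-p)$, the latter inequality being Lemma~\ref{lemblowup0}. Your remark about the case $w\equiv 0$ is subsumed by the paper's standing assumption $u\not\equiv p_*$, which (as noted right after the definition of $\phi$) already guarantees $u-p\not\equiv 0$ for every $p\in\mathcal P$.
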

\begin{proof}
It is a direct consequence of Lemmas \ref{Hincreasing} and \ref{lemblowup0}.
\end{proof}

The following result shows the monotonicity for a modified Weiss function. It is remarkable that the quantity below is monotone for all $\lambda>0$, independently of the value of the frequency. 
\begin{lemma}\label{modifieWeiss}
Let $0$ be a singular point, $\lambda\ge 0$, and $w:= u-p$, where $p\in \mathcal P$. Then the function
\[
W_\lambda(r,w) :=r^{-2\lambda}\bigg( r^{2-n} \int_{B_r} |\nabla w|^2 -  \lambda\, r^{1-n}\int_{\partial B_r} w^2 \bigg)
\]
is monotone nondecreasing in $r$.
\end{lemma}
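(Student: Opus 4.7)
My plan is to reduce the monotonicity to the case $r=1$ and then carry out the algebra there. By either redoing the computation at an arbitrary radius $r>0$, or by working with the rescaled function $v(x):=r^{-\lambda}w(rx)$ (for which the structural identities of Remark~\ref{remsign} survive thanks to the $2$-homogeneity of $p$), it suffices to prove that $W_\lambda'(1,w)\ge 0$.

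Starting from $W_\lambda(r,w)=r^{-2\lambda}\bigl(D(r)-\lambda H(r)\bigr)$ with $D,H$ as in the proof of Proposition~\ref{lemAlmgren}, I would combine the formulas $D'(1)=2\int_{\partial B_1}w_\nu^2-2\int_{B_1}\Delta w\,(x\cdot\nabla w)$ and $H'(1)=2\int_{\partial B_1}ww_\nu$ (both already derived in the Almgren computation), the integration by parts $\int_{B_1}|\nabla w|^2=\int_{\partial B_1}ww_\nu-\int_{B_1}w\Delta w$, and the pointwise identities $\Delta w\,(x\cdot\nabla w)=2p\,\chi_{\{u=0\}}$ and $w\Delta w=p\,\chi_{\{u=0\}}$ from Remark~\ref{remsign}. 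A routine simplification produces the key identity
\[
W_\lambda'(1,w) \;=\; 2\int_{\partial B_1}(w_\nu-\lambda w)^2 \;+\; 2(\lambda-2)\,I, \qquad I:=\int_{B_1\cap\{u=0\}}p\ge 0.
\]

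For $\lambda\ge 2$ both summands are manifestly nonnegative, so the inequality is immediate. The main obstacle is the regime $\lambda\in[0,2)$, where the second term has the wrong sign and must be controlled by the first. Here the crucial input is the second inequality of Lemma~\ref{lemWeiss}, namely $I\le\int_{\partial B_1}w(w_\nu-2w)$; multiplying it by $2(\lambda-2)<0$ reverses the sign and, after substituting and expanding, gives
\[
W_\lambda'(1,w) \;\ge\; 2\int_{\partial B_1}\bigl[w_\nu^2-(\lambda+2)\,w w_\nu+(\lambda^2-2\lambda+4)\,w^2\bigr].
\]
The integrand is a quadratic form in $(w_\nu,w)$ with leading coefficient $1>0$ and discriminant $(\lambda+2)^2-4(\lambda^2-2\lambda+4)=-3(\lambda-2)^2\le 0$, hence pointwise nonnegative. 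The neat cancellation making this discriminant a perfect nonpositive square is precisely what allows the lemma to hold for every $\lambda\ge 0$, and equality occurs exactly at $\lambda=2$, which serves as a sanity check that the range $[0,2)$ cannot be handled without appealing to Lemma~\ref{lemWeiss}.
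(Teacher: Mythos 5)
Your argument is correct, and it takes a genuinely different route from the paper. The paper splits on whether $\lambda$ is below or above the frequency $\phi(0^+,w)$: for $\lambda\le\phi(0^+,w)$ it writes $W_\lambda=(\phi-\lambda)H_\lambda$ and invokes the monotonicity of both factors (Proposition~\ref{lemAlmgren} and Lemma~\ref{Hincreasing}), while for $\lambda>\phi(0^+,w)$ it computes the same identity $W'_\lambda(1)=2\int_{\partial B_1}(w_\nu-\lambda w)^2+2(\lambda-2)I$ that you derive and concludes via $\phi(0^+,w)\ge 2$ (Lemma~\ref{lemblowup0}). You instead split at the universal threshold $\lambda=2$: for $\lambda\ge 2$ the identity alone suffices, and for $\lambda\in[0,2)$ you absorb the negative term through the Monneau-type inequality $I\le\int_{\partial B_1}w(w_\nu-2w)$ of Lemma~\ref{lemWeiss}, with the discriminant calculation $(\lambda+2)^2-4(\lambda^2-2\lambda+4)=-3(\lambda-2)^2\le0$ certifying pointwise nonnegativity of the resulting quadratic form in $(w_\nu,w)$. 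Your route is more self-contained, bypassing Proposition~\ref{lemAlmgren}, Lemma~\ref{Hincreasing}, and Lemma~\ref{lemblowup0} and relying only on the identity and Lemma~\ref{lemWeiss}; the paper's route in exchange makes visible the two monotone factors $\phi-\lambda$ and $H_\lambda$ that structurally explain the lemma. One small caveat on your scaling reduction: with $v:=r^{-\lambda}w(r\,\cdot\,)$, Remark~\ref{remsign} survives only up to a multiplicative constant, since $\Delta p_r=r^{2-\lambda}\ne 1$ for $\lambda\ne 2$, so $v\Delta v=r^{4-2\lambda}\,p\,\chi_{\{u(r\,\cdot\,)=0\}}$ rather than $p\,\chi_{\{u(r\,\cdot\,)=0\}}$. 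This is harmless because both $W'_\lambda(1,\cdot)$ and the inequality of Lemma~\ref{lemWeiss} are quadratically homogeneous; alternatively the unnormalized rescaling $w_r(x)=w(rx)$ preserves $\Delta w_r\,(x\cdot\nabla w_r)=2\,w_r\Delta w_r$ exactly, which is all the computation uses.
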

\begin{proof}
For $0\le \lambda\le  \phi(0^+,w)$ we have $W_\lambda =  (\phi -\lambda) H_\lambda$, the product of two positive nondecreasing functions (thanks to Proposition \ref{lemAlmgren} and Lemma \ref{Hincreasing}), hence $W_\lambda$ is nondecreasing.

The result is more interesting for $\lambda >\phi(0^+,w)$ and it requires a different proof. Indeed, using the notation and calculations from the proof of Proposition \ref{lemAlmgren} we have, for $I := \int_{B_1} w\Delta w  = \int_{B_1\cap\{u=0\}} p \ge 0$,
\[
\begin{split}
W'_\lambda(1) &= D'(1) - \lambda H'(1) -2\lambda \big(D(1)-\lambda H(1)\big) 
\\
& = \bigg(2\int_{\partial B_1} w_\nu ^2- 4 I \bigg) - 2\lambda \int_{\partial B_1}ww_\nu -2\lambda\bigl( D(1)- \lambda H(1)\bigr)
\\
&= \bigg(2\int_{\partial B_1} w_\nu ^2- 4 I \bigg) - 2\lambda \int_{\partial B_1}ww_\nu -2\lambda \bigg(\int_{\partial B_1} ww_\nu -  I \bigg)   +2\lambda^2 H(1)
\\
&= 2 \bigg( \int_{\partial B_1} w_\nu ^2 - 2\lambda\int_{\partial B_1} ww_\nu + \lambda^2 \int_{\partial B_1} w^2\bigg)  +  2(\lambda -2)I 
\\
&= 2  \int_{\partial B_1} (w_\nu -\lambda w)^2   + 2(\lambda -2)I.
\end{split}
\]
Since $\lambda > \phi(0^+,w)\geq 2$ (by Lemma \ref{lemblowup0}), the result follows.
\end{proof}

As a consequence of this result we can prove that, given $\lambda>\lambda_*=\phi(0^+,u-p_*)$, the function $H_\lambda$ blows up at 0. This, combined with the monotonicity of $H_{\lambda_*}$ (see Lemma \ref{Hincreasing}), shows that
\begin{equation}
\label{eq:decay w}
r^{2\lambda}\lesssim \ave_{\partial B_r}w^2 \lesssim r^{2\lambda_*}\qquad \text{ for $r \ll 1$}.
\end{equation}
Note that while this estimate is classical for harmonic functions (since the frequency function is related to the derivative of $H_\lambda$), in our case only an inequality is available (see the proof of Lemma \ref{Hincreasing}) and a different argument is needed.

\begin{corollary}
Let $0$ be a singular point, $w:=u-p_*$, $\lambda_*:=\phi(0^+,w)$, and fix $\lambda>\lambda_*$. Let $H_\lambda$ be as in as in Lemma \ref{Hincreasing}. Then 
$$
H_\lambda(r,w)\to +\infty\qquad \text{as $r\downarrow0$.}
$$
\end{corollary}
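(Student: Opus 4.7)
The plan is to argue by contradiction using the monotonicity of the modified Weiss function $W_\lambda$. Set $L := W_\lambda(0^+,w) = \lim_{r\to 0^+}W_\lambda(r,w)$, which exists in $[-\infty,\infty)$ by Lemma~\ref{modifieWeiss}. Since $\phi(r,w)\to\lambda_*<\lambda$, there is $r_1>0$ with $\phi(r,w)<\lambda$ on $(0,r_1]$, so $W_\lambda(r,w)=(\phi(r,w)-\lambda)H_\lambda(r,w)<0$ there; by monotonicity $L\le W_\lambda(r,w)<0$. If $L=-\infty$, the identity $H_\lambda(r,w)=W_\lambda(r,w)/(\phi(r,w)-\lambda)$, combined with the fact that $|\phi(r,w)-\lambda|$ is bounded above and bounded away from zero on $(0,r_1]$, immediately forces $H_\lambda(r,w)\to+\infty$.

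The substantive task is to rule out $L\in(-\infty,0)$. In that case, using that $W_\lambda$ and $\phi$ both have \emph{full} limits at $0^+$ by monotonicity, $H_\lambda(r,w)\to c := L/(\lambda_*-\lambda)\in(0,\infty)$. Since $\int_0^{r_1}W_\lambda'(r)\,dr = W_\lambda(r_1,w)-L<\infty$, the explicit formula for $W_\lambda'$ derived in the proof of Lemma~\ref{modifieWeiss}, applied to the $\lambda$-rescaling $\tilde v_r(x):=r^{-\lambda}w(rx)$ (and using $\lambda>\lambda_*\ge 2$ so both contributions to $W_\lambda'$ are nonnegative), yields
\[
\int_0^{r_1}\frac{1}{r}\int_{\partial B_1}\bigl((\tilde v_r)_\nu-\lambda\tilde v_r\bigr)^2 d\sigma\,dr<\infty.
\]
A logarithmic change of variables $t=-\log r$ then produces a sequence $r_n\to 0$ along which $(\tilde v_{r_n})_\nu-\lambda\tilde v_{r_n}\to 0$ in $L^2(\partial B_1)$.

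Since $\|\tilde v_r\|_{L^2(\partial B_1)}^2 = H_\lambda(r,w)$ and $\|\nabla\tilde v_r\|_{L^2(B_1)}^2 = \phi(r,w)H_\lambda(r,w)$ are both bounded, $\{\tilde v_{r_n}\}$ is bounded in $H^1(B_1)$; extracting further we obtain $\tilde v_{r_n}\to\tilde q$ weakly in $H^1$ and strongly in $L^2(\partial B_1)$, with $\|\tilde q\|_{L^2(\partial B_1)}^2 = c>0$. The strong $L^2$-convergence of the trace combined with $(\tilde v_{r_n})_\nu-\lambda\tilde v_{r_n}\to 0$ in $L^2(\partial B_1)$ pins down $\tilde q_\nu=\lambda\tilde q$ on $\partial B_1$ in the weak sense. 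On the other hand, by Almgren's frequency monotonicity (Proposition~\ref{lemAlmgren}) together with the standard compactness argument, the normalized blow-ups $v_r(x):=w(rx)/\sqrt{H(r)}$ converge (along a further subsequence) weakly in $H^1$ and strongly in $L^2(\partial B_1)$ to a $\lambda_*$-homogeneous function $q$ with $\|q\|_{L^2(\partial B_1)}=1$. As $\tilde v_r=\sqrt{H_\lambda(r,w)}\,v_r$ and $H_\lambda(r_n,w)\to c$, we must have $\tilde q=\sqrt{c}\,q$, and the $\lambda_*$-homogeneity gives $\tilde q_\nu=\lambda_*\tilde q$ on $\partial B_1$. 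Combining $\tilde q_\nu=\lambda\tilde q$ and $\tilde q_\nu=\lambda_*\tilde q$ with $\lambda\ne\lambda_*$ forces $\tilde q\equiv 0$ on $\partial B_1$, contradicting $c>0$. The main obstacle is precisely this double identification of $\tilde q_\nu$: one condition ($\lambda\tilde q$) comes from the modified Weiss integrability, the other ($\lambda_*\tilde q$) from the Almgren blow-up's inherent homogeneity, and their incompatibility is what drives the contradiction.
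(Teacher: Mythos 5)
Your argument is correct, but it takes a genuinely different and considerably heavier route than the paper. The paper's proof is a three-line trick: assuming $H_\lambda(r_k,w)\le C$ along some $r_k\downarrow 0$, pick an \emph{intermediate} exponent $\mu\in(\lambda_*,\lambda)$. Then $H_\mu(r_k,w)=r_k^{2(\lambda-\mu)}H_\lambda(r_k,w)\to 0$, and since $W_\mu\ge -\mu H_\mu$ trivially, $\liminf_k W_\mu(r_k,w)\ge 0$; monotonicity of $W_\mu$ (Lemma~\ref{modifieWeiss}) then forces $W_\mu(r,w)\ge 0$ for \emph{all} $r$, i.e.\ $\phi(r,w)\ge\mu>\lambda_*$ for all $r$, contradicting $\phi(0^+,w)=\lambda_*$. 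No case analysis, no Dini integrals, and crucially no blow-up analysis.

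You instead analyze the exact value of $L:=W_\lambda(0^+,w)$, dispose of $L=-\infty$ quickly, and in the ``hard'' case $L\in(-\infty,0)$ derive a Dini integrability of $\|(\tilde v_r)_\nu-\lambda\tilde v_r\|_{L^2(\partial B_1)}^2$ from the explicit $W_\lambda'$ formula, extract a good sequence, and contrast the forced Robin relation $\tilde q_\nu=\lambda\tilde q$ against the homogeneity relation $\tilde q_\nu=\lambda_*\tilde q$ coming from the Almgren blow-up. This works, but note two costs. First, the $\lambda_*$-homogeneity of the normalized blow-up is \emph{not} the ``standard compactness argument'' in this setting: since $w=u-p_*$ is not harmonic, the blow-up only satisfies $q\Delta q\equiv 0$, and proving that it is nonetheless $\lambda_*$-homogeneous is precisely the content of Steps 2--3 of Proposition~\ref{propblowup}, which in the paper is established \emph{after} this corollary; so your proof inverts the logical order. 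Second, the ``double identification'' of $\tilde q_\nu$ requires the extra (glossed-over) regularity step of identifying $\lim_n(\tilde v_{r_n})_\nu$ in $L^2(\partial B_1)$ with the normal derivative of the weak $H^1$ limit, which needs the bounded-Laplacian structure. The paper's choice of intermediate exponent $\mu$ sidesteps all of this: it buys a self-contained proof using only the monotonicity lemmas already on the table and needs no information about the blow-up at all.
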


\begin{proof}
Assume by contradiction that there exists a sequence $r_k\downarrow0$ such that $H_\lambda(r_k,w)\leq C$ for some constant $C$. Then, taking $\mu \in (\lambda_*,\lambda)$, it follows that  $H_\mu(r_k,w)\to 0$. 
Hence, with the notation of Lemma \ref{modifieWeiss}, this gives (since $W_\mu \geq -\mu \,H_\mu$)
$$
\liminf_{k\to \infty}W_\mu(r_k,w)\geq \liminf_{k\to \infty} -\mu \,H_\mu(r_k,w) =0.
$$
By the monotonicity of $W_\mu$, this implies that $W_\mu(r,w)\geq 0$ for all $r>0$, or equivalently
$$
r^{2-n} \int_{B_r} |\nabla w|^2 \geq  \mu\, r^{1-n}\int_{\partial B_r} w^2\qquad \forall\,r>0.
$$
But this means that $\phi(r,w)\geq \mu$ for all $r>0$, a contradiction to the fact that $\mu>\lambda_*$.
\end{proof}

\subsection{Blow-up analysis}
We now start investigating the structure of possible blow-ups.
\begin{proposition}\label{propblowup}
Let $0$ be a singular point, $w:=u-p_*$, and for $r>0$ small define
\[ w_r(x): = w(rx) ,\qquad 
 \tilde w_r :  = \frac{w_r}{\|w_r\|_{L^2(\partial B_1)}}.
 \]
Let $ L := \{p_*=0\}$,
and  $m\in \{0,1,2,\dots n-1\}$ be the dimension of $L$. Also, let $\lambda_*:=\phi(0^+,w)$.
Then:
\begin{enumerate}
\item[(a)]
For $0\le m\le n-2$ we have $\lambda_*\in \{2,3,4,5,\dots\}$. Moreover,  for every sequence $r_k\downarrow 0$ there is a subsequence $r_{k_\ell}$ such that  $\tilde w_{r_{k_\ell}} \rightharpoonup q$ in $W^{1,2}(B_1)$ as $\ell \to \infty$,  where $q\not\equiv 0$ is a $\lambda_*$-homogeneous harmonic polynomial.  

In addition, if $\lambda_*=2$, then in an appropriate coordinate frame it holds
\begin{equation}\label{howisD2q}
D^2p_*= 
\left(
\begin{array}{ccc|c}
 \mu_1& & &\\
& \ddots & & 0_m^{n-m}\\
 &  &\mu_{n-m}&\\
\hline
 & 0_{n-m}^m &&  0_{m}^{m}
\end{array}
\right)
\quad
\mbox{and} 
\quad
D^2 q= 
\left(
\begin{array}{ccc|c}
 t& & &\\
& \ddots & & 0_m^{n-m}\\
 &  &t&\\
\hline
 & 0_{n-m}^m && -N
\end{array}
\right),
\end{equation}
where $\mu_1,\ldots,\mu_{n-m},t>0$, $\sum_{i=1}^{n-m}\mu_i=1$, and $N$ is a symmetric nonnegative definite $m\times m$ matrix with ${\rm tr }(N) =(n-m)t$.
\item[(b)] For $m=n-1$ we have $\lambda_*\ge 2+\alpha_\circ$, where $\alpha_\circ >0$ is a dimensional constant. Moreover, for every sequence $r_k\downarrow 0$ there is a subsequence $r_{k_\ell}$ such that  $\tilde w_{r_{k_\ell}} \rightharpoonup q$ in $W^{1,2}(B_1)$,  where $q\not\equiv 0$ is a $\lambda_*$-homogeneous solution of the Signorini problem (with obstacle $0$ on $L$):
\begin{equation}\label{TOP}
\Delta q\le 0\quad \text{and} \quad q\Delta q=0 \quad \mbox{in }\R^n, \quad \Delta q=0 \quad \mbox{in }\R^n\setminus  L,  \quad\mbox{and}\quad q\ge 0 \quad \mbox{on } L.
\end{equation}
\end{enumerate}
\end{proposition}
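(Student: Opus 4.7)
I would construct $q$ as a weak $W^{1,2}(B_1)$ subsequential limit of the normalized rescalings $\tilde w_r$, deduce its $\lambda_*$-homogeneity from the Almgren frequency, identify the PDE satisfied by $q$ via a capacity argument that separates cases (a) and (b), and finally obtain the structural refinements from Monneau's monotonicity together with the Signorini classification.

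For compactness and homogeneity, the identity $\int_{B_1}|\nabla\tilde w_r|^2=\phi(r,w)$ combined with the monotonicity and boundedness of $\phi$ near $0$ (Proposition~\ref{lemAlmgren}) yields a uniform $W^{1,2}(B_1)$ bound on $\tilde w_r$, so along a subsequence $\tilde w_{r_{k_\ell}}\rightharpoonup q$ weakly in $W^{1,2}$ and strongly in $L^2(\partial B_1)$, giving $\|q\|_{L^2(\partial B_1)}=1$ and hence $q\not\equiv 0$. By scaling, $\phi(\rho,\tilde w_r)=\phi(r\rho,w)\to\lambda_*$ for every $\rho\in(0,1]$, so in the limit $\phi(\rho,q)=\lambda_*$; together with the distributional inequality $q\Delta q\ge 0$ (inherited from $\tilde w_r\Delta\tilde w_r=(r^4/h_r^2)\,p_*\,\chi_{\{u(r\cdot)=0\}}\ge 0$ via lower semicontinuity of nonnegative distributions), the derivative computation carried out in the proof of Proposition~\ref{lemAlmgren} forces $q$ to be $\lambda_*$-homogeneous and $q\Delta q\equiv 0$ in $B_1$.

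For the PDE, the inequality $p_*(x)\gtrsim\dist(x,L)^2$ combined with the uniform convergence $u(r\cdot)/r^2\to p_*$ confines $\{u(r\cdot)=0\}\cap B_1$ to an $\epsilon_r$-neighborhood of $L$ with $\epsilon_r\downarrow 0$, so the nonpositive measure $\Delta\tilde w_r=-r^2 h_r^{-1}\chi_{\{u(r\cdot)=0\}}$ concentrates on $L$; hence $\Delta q$ is a nonpositive distribution supported in $L\cap B_1$. In case (a) with $m\le n-2$, $L$ has zero $H^1$-capacity, so any element of $H^{-1}(B_1)$ supported on $L$ must vanish; thus $q$ is harmonic, and extending by homogeneity it is a $\lambda_*$-homogeneous harmonic polynomial on $\R^n$, forcing $\lambda_*\in\{2,3,4,\ldots\}$. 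In case (b) with $m=n-1$, $L$ is a hyperplane, $\Delta q\le 0$ is a measure on $L$, $q\Delta q=0$, and $\Delta q=0$ in $\R^n\setminus L$; the inequality $\tilde w_r|_L\ge 0$ (from $u\ge 0$ and $p_*|_L\equiv 0$) passes to the limit, using harmonicity and hence interior smoothness of $q$ on each side of $L$, to give $q\ge 0$ on $L$, which is precisely \eqref{TOP}.

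For the structural claim \eqref{howisD2q} in case (a) when $\lambda_*=2$, note that $H_2(0^+,w)=\int_{\partial B_1}(p_*-p_*)^2=0$, so the naive pointwise constraint $q+cp_*\ge 0$ is vacuous; instead I would apply Monneau's monotonicity (Corollary~\ref{monneau}) to perturbations $p=p_*+\epsilon q_0$ for those $2$-homogeneous harmonic polynomials $q_0$ with $D^2(p_*+\epsilon q_0)\in\mathcal M$ for small $\epsilon>0$, which in the limit yields the family of linear constraints $\int_{\partial B_1}q\,q_0\le 0$. The admissible $q_0$ are precisely the $2$-homogeneous harmonic polynomials whose Hessian restricted to $L$ is nonnegative; varying $q_0$ in this class and using $q\not\equiv 0$, a linear-algebra argument in a basis adapted to $L$ (and diagonalizing $A|_{L^\perp}$) forces the block structure of \eqref{howisD2q}. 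In case (b), the same Monneau-based constraints, combined with $q\ge 0$ on $L$ and $\Delta q\le 0$, rule out $\lambda_*=2$; the stronger gap $\lambda_*\ge 2+\alpha_\circ$ then follows from the spectral gap for homogeneities of global solutions to the codimension-one Signorini problem \cite{AC04,ACS08}. The main obstacle is the identification of $\Delta q$ with the correct support and sign (which is where the codimension dichotomy enters), together with the Monneau-based linear-algebra derivation of \eqref{howisD2q}; the exclusion of $\lambda_*\in[2,2+\alpha_\circ)$ in case (b) is the deepest external input, depending on the classification/gap theory for the Signorini problem.
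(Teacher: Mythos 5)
Your overall architecture mirrors the paper's quite closely (Almgren compactness, capacity dichotomy in $m\le n-2$ vs.\ $m=n-1$, Monneau/Lemma~\ref{charq}-type orthogonality feeding a linear-algebra argument), but several individual steps contain genuine gaps.

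First, the homogeneity of $q$. You assert ``$\phi(\rho,\tilde w_r)\to\lambda_*$, so in the limit $\phi(\rho,q)=\lambda_*$.'' Weak $W^{1,2}$ convergence plus trace compactness only gives the one-sided bound $\phi(\rho,q)\le\lambda_*$ (the Dirichlet integral is merely lower semicontinuous, so $D(\rho,\tilde w_{r_k})$ need not converge to $D(\rho,q)$). The reverse inequality is the crux: the paper establishes it by applying Lemma~\ref{Hincreasing} to $\tilde w_{r_k}$ to obtain the decay $\rho^{-2\lambda_*}\ave_{\partial B_\rho}q^2\le 1$, and then observing that if $\phi(R,q)<\lambda_*$ for some $R$, the (equality-case, since $q\Delta q\equiv 0$) differential identity for $H_\lambda$ would force a lower bound on $\ave_{\partial B_\rho}q^2$ incompatible with that decay. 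Your plan does not supply this step, and ``the derivative computation forces homogeneity'' is circular without first knowing $\phi(\cdot,q)$ is constant.

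Second, the identity $q\Delta q\equiv 0$. You claim it is ``inherited $\ldots$ via lower semicontinuity of nonnegative distributions.'' The measures $\tilde w_r\Delta\tilde w_r$ are nonnegative, but the product of a strongly-$L^2$ convergent sequence with a weak-$*$ convergent sequence of measures does not converge in general, so nonnegativity of $q\Delta q$ is not immediate, and even granting it you have not argued why it vanishes. In case (a) the paper avoids the issue altogether: once $\Delta q=0$ by the capacity argument, $q\Delta q=0$ is trivial. In case (b) the paper proves uniform semiconvexity/semiconcavity estimates (\eqref{semic} and its consequences) to upgrade to locally uniform convergence of $\tilde w_r$ and then extracts $\int_{B_1}q\Delta q=0$ from the vanishing remainder in the Almgren derivative along a well-chosen sequence of radii. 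Your proposal omits both ingredients.

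Third, and most seriously, the gap $\lambda_*\ge 2+\alpha_\circ$ in case (b). You attribute it to ``the spectral gap for homogeneities of global solutions to the codimension-one Signorini problem.'' No such gap theorem is available: $2$-homogeneous Signorini solutions exist (quadratic harmonic polynomials), and whether homogeneities in $(2,3)$ occur in $n\ge 3$ is, as the paper explicitly notes in Remark~\ref{rmk:optimal}(2a), an open problem. The paper's $\alpha_\circ$ does \emph{not} come from Signorini theory alone; it comes from the \emph{conjunction} of the Signorini structure, the Monneau orthogonality \eqref{orthogonality}, and the sign condition $q\ge 0$ on $L$ with $q(0)=0$. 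Concretely: a compactness argument produces, from a hypothetical sequence with $\lambda_*^{(k)}\downarrow 2$, a $2$-homogeneous harmonic polynomial $q^{(\infty)}$ to which Lemma~\ref{lemmatrix} applies, forcing the block form with $-N\le 0$ and $\tr(N)=t>0$; but $q^{(\infty)}\ge 0$ on $L$ requires $-N\ge 0$, a contradiction. You need to replace your invocation of a Signorini spectral gap by this compactness-plus-Monneau argument; as written that step is incorrect.

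Your treatment of \eqref{howisD2q} via Monneau perturbations $p_*+\epsilon q_0$ is workable and essentially equivalent to the paper's Lemma~\ref{charq}/Lemma~\ref{lemmatrix}; just note that admissibility of $p_*+\epsilon q_0$ in $\mathcal P$ requires $D^2q_0|_L\ge 0$ and you should verify the resulting family of linear constraints (equivalently $\tr(CA)\ge\tr(CB)$ for $B\in\mathcal M$) actually pins down the block structure, as Lemma~\ref{lemmatrix} does by testing $B=\boldsymbol v\otimes\boldsymbol v$ with $\boldsymbol v$ the top eigenvector of $C$.
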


To prove Proposition \ref{propblowup}, we  need the following auxiliary lemmas:
\begin{lemma}\label{charq}
Let $\tilde w_r$ be as in Proposition \ref{propblowup}
and assume that, for some sequence $r_{k_\ell}\downarrow 0$, it holds $\tilde w_{r_{k_\ell}} \rightharpoonup q$ in $W^{1,2}(B_1)$.  Then
\begin{equation}\label{orthogonality}
\int_{\partial B_1} q(p_*-p) \ge  0 \qquad \mbox{for all } p \in \mathcal P.
\end{equation}
\end{lemma}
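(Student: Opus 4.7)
The plan is to combine Monneau's monotonicity formula (Corollary \ref{monneau}) with the quadratic decay $u(x) = p_*(x) + o(|x|^2)$ at the singular point $0$. The idea is first to pin down the limit
\[
H_2(0^+, u-p) = \|p_* - p\|_{L^2(\partial B_1)}^2 \qquad \text{for every } p \in \mathcal P,
\]
and then to feed this into the natural expansion of $H_2(r, u-p)$ along $w$ and $p_*-p$ to extract the sign information.

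Setting $v_r(y) := r^{-2} u(ry)$ and using that $p$ is $2$-homogeneous, a direct rescaling gives $H_2(r, u-p) = \|v_r - p\|_{L^2(\partial B_1)}^2$. Since $0$ is a singular point, $v_r \to p_*$ uniformly on $\partial B_1$ by \cite{C98, M03}, so the claimed limit follows. Taking $p = p_*$ in particular yields $H_2(0^+, w) = 0$; combined with the identity $\delta_r^2 = r^4 H_2(r, w)$, where $\delta_r := \|w_r\|_{L^2(\partial B_1)}$, this says $r^{-2}\delta_r = \sqrt{H_2(r, w)} \to 0$ as $r \downarrow 0$.

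Writing $u - p = w + (p_* - p)$ and expanding $\|v_r - p\|_{L^2(\partial B_1)}^2$ produces the algebraic identity
\[
H_2(r, u-p) - \|p_*-p\|_{L^2(\partial B_1)}^2 \;=\; (r^{-2}\delta_r)\left(r^{-2}\delta_r + 2\int_{\partial B_1} \tilde w_r (p_* - p)\right).
\]
By Corollary \ref{monneau}, the left-hand side is nonnegative for every $r > 0$. For the indices $\ell$ in the given subsequence $\delta_{r_{k_\ell}} > 0$ (otherwise $\tilde w_{r_{k_\ell}}$ would not be defined), so dividing by the positive factor $r_{k_\ell}^{-2}\delta_{r_{k_\ell}}$ gives
\[
\int_{\partial B_1}\tilde w_{r_{k_\ell}}(p_*-p) \;\geq\; -\tfrac12\, r_{k_\ell}^{-2}\delta_{r_{k_\ell}}.
\]

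Finally, letting $\ell \to \infty$, the right-hand side vanishes by the previous paragraph. On the left, the weak convergence $\tilde w_{r_{k_\ell}} \rightharpoonup q$ in $W^{1,2}(B_1)$ yields (via compactness of the trace) strong $L^2(\partial B_1)$ convergence of the traces, so $\int_{\partial B_1} \tilde w_{r_{k_\ell}}(p_*-p) \to \int_{\partial B_1} q(p_*-p)$, and \eqref{orthogonality} follows. The only real content is the algebraic identity above, which converts Monneau's monotonicity directly into the desired sign condition once the quadratic decay $r^{-2}\delta_r \to 0$ is in hand; identifying $H_2(0^+,u-p)$ and the trace convergence are both routine.
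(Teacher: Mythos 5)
Your proof is correct and follows essentially the same approach as the paper: use Monneau's monotonicity to get $H_2(r,u-p) \ge H_2(0^+,u-p) = \|p_*-p\|_{L^2(\partial B_1)}^2$, expand the square, divide by the small positive factor $r^{-2}\|w_r\|_{L^2(\partial B_1)}$, and pass to the limit along the given subsequence using trace compactness. The paper's quantity $\varepsilon_r := h_r/r^2$ is exactly your $r^{-2}\delta_r$, so the two arguments are the same up to notation.
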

 
 \begin{proof}
By the definition of $p_*$ it holds that
\[ w_r(x) = (u-p_*)(rx) = o(r^2)\quad \mbox{as } r\downarrow 0.\]
Let us denote $h_r:=  \|w_r\| _{L^2{(\partial B_1)}} =o(r^2)$ and $\ep_r: = h_r/r^2 = o(1)$ as $r\downarrow 0$. 
Note that, by the compactness of the trace operator $W^{1,2}(B_1)\to L^2(\partial B_1)$, we have $\tilde w_{r_{k_\ell}}=w_{r_{k_\ell}}/h_{r_{k_\ell}}\to q$ in $L^2(\partial B_1)$.

By Corollary \ref{monneau} and the definition of $p_*$, for any fixed $p\in \mathcal P$ we have
\[
\int_{\partial B_1}  \bigg(\frac{w_{r}}{r^2} + p_*-p\bigg)^2 = \int_{\partial B_1} \biggl(\frac{u(rx)-p(rx)}{r^2}\biggr)^2 \downarrow  \int_{\partial B_1} (p_*-p )^2\qquad \text{as $r \downarrow 0$.}
\]
Hence, since $r^{-2}w_r=\ep_r\tilde w_r$,
\[
\int_{\partial B_1}  \big( \ep_r\tilde w_r + p_* -p\big)^2 \ge  \int_{\partial B_1} (p_*-p )^2\qquad \forall\,r>0,\,\forall\,p \in \mathcal P.
\]
Developing the squares
and taking $r=r_{k_\ell}$ we get
$$
\ep_{r_{k_\ell}}^2  \int_{\partial B_1}  \tilde w_{r_{k_\ell}}^2 + 2  \ep_{r_{k_\ell}}\int_{\partial B_1}  \tilde w_{r_{k_\ell}} (p_*-p) \ge  0.
$$
Dividing by $\ep_{r_{k_\ell}}$ and letting $\ell \to \infty$ we obtain \eqref{orthogonality}.
\end{proof}

\begin{lemma}\label{lemmatrix}
Let $p_* \in \mathcal P$,
and assume that $q\not\equiv 0$ is a $2$-homogeneous harmonic polynomial satisfying \eqref{orthogonality}. Then, in an appropriate system of coordinate, \eqref{howisD2q} holds.
\end{lemma}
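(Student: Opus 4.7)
My plan is to convert \eqref{orthogonality} into a linear-algebraic inequality on the Hessians of $p_*$ and $q$, and then exploit the extremal structure of $\mathcal M$ to deduce a block decomposition of $D^2q$. Write $p_*(x)=\tfrac12 x^\top Ax$, $p(x)=\tfrac12 x^\top Bx$ with $A,B\in\mathcal M$, and $q(x)=\tfrac12 x^\top Mx$ with $M$ symmetric and $\tr(M)=0$ by harmonicity. Using the standard spherical fourth-moment identity $\int_{\partial B_1}x_ix_jx_kx_l\,d\sigma\propto \delta_{ij}\delta_{kl}+\delta_{ik}\delta_{jl}+\delta_{il}\delta_{jk}$, a direct computation collapses $\int_{\partial B_1}q(p_*-p)$ to a positive multiple of $\tr\bigl(M(A-B)\bigr)$, the terms involving $\tr(M)$ dropping out. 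Hence \eqref{orthogonality} is equivalent to
\[
\tr(MA)\ge\tr(MB)\quad\text{for every } B\in\mathcal M,
\]
i.e.\ $A$ maximizes the linear functional $B\mapsto\tr(MB)$ on $\mathcal M$.

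Next I exploit the extremization. The set $\mathcal M$ is compact and convex with extreme points exactly the rank-one projections $\{vv^\top:|v|=1\}$, so the maximum of $\tr(M\,\cdot\,)$ on $\mathcal M$ equals $\lambda_{\max}(M)=:t$, attained on every unit top eigenvector of $M$. Since $\tr(M)=0$ and $q\not\equiv 0$ rule out $M\le 0$, we have $t>0$. Decomposing $A=\sum_i\mu_i u_i u_i^\top$ spectrally with $\mu_i\ge 0$, $\sum_i\mu_i=1$, and $\{u_i\}$ orthonormal, the identity $\sum_i\mu_i\,u_i^\top M u_i=t$ together with $u_i^\top M u_i\le t$ forces $u_i^\top Mu_i=t$ whenever $\mu_i>0$; the Rayleigh characterization then gives $Mu_i=tu_i$ for each such $i$. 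Therefore $L^\perp$, which equals the range of $A$, sits inside the top eigenspace of $M$.

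Third, I choose an orthonormal basis so that $e_1,\ldots,e_{n-m}$ are eigenvectors of $A$ with positive eigenvalues $\mu_1,\ldots,\mu_{n-m}$ and $e_{n-m+1},\ldots,e_n$ span $L$. In this frame $A$ has the claimed block form. Moreover, since $Me_i=te_i$ for $i\le n-m$, the top-left $(n-m)\times(n-m)$ block of $M$ equals $tI_{n-m}$, and the symmetry of $M$ forces the cross blocks to vanish; hence $M$ is block-diagonal of the form $\mathrm{diag}(tI_{n-m},M')$ for some symmetric $m\times m$ matrix $M'$. Setting $N:=-M'$, the harmonicity $\tr(M)=0$ yields $\tr(N)=(n-m)t$, which gives \eqref{howisD2q} modulo the sign claim on $N$.

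The main obstacle I foresee is proving $N\ge 0$. The orthogonality \eqref{orthogonality} on its own only controls $\lambda_{\max}(M')\le t$; positive eigenvalues of $M'$ remain compatible with $A$ being a maximizer whenever the top eigenspace of $M$ strictly contains $L^\perp$. To close this gap I would look to use the pointwise sign information on $q|_L$ that $q$ inherits from the blow-up construction (since $u\ge 0$ and $p_*\equiv 0$ on $L$), pushed through the weak $W^{1,2}$ limit that defines $q$, and translate it into the required semidefiniteness of $M'$. Pinning down this sign is where I expect the delicate part of the argument to lie.
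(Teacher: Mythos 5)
Your block-decomposition argument --- converting \eqref{orthogonality} to the trace inequality $\tr(MA)\ge\tr(MB)$ on $\mathcal M$, using the extreme points $\boldsymbol{v}\otimes\boldsymbol{v}$ of $\mathcal M$, deducing that $L^\perp={\rm range}(A)$ lies inside the top eigenspace of $M$, and reading off $\tr(N)=(n-m)t>0$ from harmonicity --- is exactly the paper's proof, modulo your use of the spherical fourth-moment identity in place of Green's theorem to reduce $\int_{\partial B_1}q(p_*-p)$ to $\tr(M(A-B))$.

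The concern you raise about $N\ge0$ is genuine: it does not follow from \eqref{orthogonality}, and the paper's own proof leaves it unjustified (``the result follows easily'' does not address it). For instance take $n=3$, $m=2$, $p_*(x)=\tfrac12 x_1^2$, $q(x)=x_1^2+\tfrac12 x_2^2-\tfrac32 x_3^2$; then $M={\rm diag}(2,1,-3)$ is traceless, $\tr(MA)=2=\lambda_{\max}(M)\ge\tr(MB)$ for every $B\in\mathcal M$, yet $N={\rm diag}(-1,3)$ is not nonnegative definite --- and here the top eigenspace of $M$ is exactly $L^\perp$, so the failure is not tied to strict containment. Moreover, the fix you sketch points in the wrong direction: if $q|_L\ge0$ then for $x\in L$ one gets $0\le q(x)=\tfrac12 x^\top M'x$, hence $M'\ge0$, i.e.\ $N\le0$, which together with $\tr(N)=(n-m)t>0$ is a contradiction. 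This is precisely the mechanism exploited in Step~4 of the proof of Proposition~\ref{propblowup} (for $m=n-1$, where $q\ge0$ on $L$ is available thanks to the uniform convergence established there) to \emph{rule out} a $2$-homogeneous blow-up; it therefore cannot serve to establish $N\ge0$. The practical resolution is that ``$N\ge0$'' should simply be removed from \eqref{howisD2q}: in each place the lemma is invoked (Step~4 of Proposition~\ref{propblowup}, Lemmas~\ref{lem11} and~\ref{lem12}, and part~(e) of the proof of Theorem~\ref{thm:main}), only the block form, the identity $\tr(D^2q|_{L\otimes L})=-(n-m)t<0$, and --- in the last instance --- a quantitative negative lower eigenvalue bound for $D^2q|_L$, which follows from the trace identity together with a lower bound on $t$, are actually needed.
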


\begin{proof}
Take $p \in \mathcal P$ and define  $A:= D^2 p_* $, $B:= D^2 p$, and $C := D^2q$.  Then, since $x\cdot \nabla q=2q$
and $\Delta q=0$, it follows from \eqref{orthogonality} that
\[
\begin{split}
0\le \int_{\partial B_1} q(p_*-p) &= \frac{1}{2} \int_{\partial B_1} q_\nu(p_*-p) = \frac 1 2 \int_{B_1} \nabla q \cdot \nabla (p_*-p) 
\\ &=  \frac 1 2\int_{B_1} Cx\cdot (A-B) x\, dx = c_n {\rm  tr}\big(C(A-B)\big),
\end{split}
\]
for some dimensional constant $c_n>0$. Hence, since $p\in \mathcal P$ was arbitrary, we  
deduce that (recall \eqref{eq:def M})
\begin{equation}\label{ineq}
{\rm tr}(CA) \ge {\rm tr}(CB) \quad \mbox{ for all }B \in \mathcal M.
\end{equation}
To show that this implies \eqref{howisD2q}, 
let $\boldsymbol{v} \in \mathbb S^{n-1}$ be an eigenvector for $C$ corresponding to its largest eigenvalue $\nu_{\max}>0$, and choose $B:=\boldsymbol{v}\otimes \boldsymbol{v}$. Then, since $A\geq 0$ and ${\rm tr} (A)=1$, \eqref{ineq} yields
$$
\nu_{\max}= {\rm tr}(\nu_{\max} {\rm Id} A) \geq {\rm tr}(CA) \geq {\rm tr}(CB)= \nu_{\max}.
$$
Thus
$$
{\rm tr}([\nu_{\max} {\rm Id} - C]A)=0,
$$
and because both $A$ and
$\nu_{\max} {\rm Id} - C$ are symmetric and nonnegative definite, we deduce that the kernels of these two matrices decompose orthogonally $\R^n$. In addition, if we set $L=\{p_*=0\}={\rm ker}(A)$, then $(\nu_{\max} {\rm Id} - C)|_{L^\perp}\equiv 0$. Thanks to this fact and recalling that ${\rm tr}(C)=0$ (since $q$ is harmonic), 
the result follows easily. 
\end{proof}

We can now prove  Proposition \ref{propblowup}.

\begin{proof}[Proof of Proposition \ref{propblowup}] For the sake of clarity, we divide the proof into several steps. 

\smallskip

- {\em Step 1}. We note that $\{\tilde w_r\}$ is precompact. Indeed, by Proposition \ref{lemAlmgren}  we have
\[
\int_{\partial B_1} \tilde w_r^2  =1 \quad \mbox{and}\quad  \int_{B_1} |\nabla \tilde w_r|^2  = \phi(r) \le \phi(1) < \infty. 
\]
This yields uniform bounds  $\| \tilde w_r \|_{W^{1,2}(B_1)} \le C$ for all $r\in (0,1)$.
As a consequence, given a sequence $r_k\downarrow 0$ there is a subsequence $r_{k_\ell}\downarrow 0$ such that 
\[
\tilde w_{r_{k_\ell}} \rightharpoonup q 
\qquad \mbox{in } W^{1,2}(B_1).
\]
In particular, by the compactness of the trace operator $W^{1,2}(B_1)\to L^2(\partial B_1)$, it follows that
\[
\| q\|_{L^2(\partial B_1)} =1.
\]

\smallskip

- {\em Step 2}.  We prove (a). So, we assume $m\leq n-2$ and we consider $q$ a possible limit of a converging sequence $\tilde w_{r_{k_\ell}}$. We want to prove that $q$ is a harmonic homogeneous polynomial.

We first show that $q$ is harmonic. Note that
\begin{equation}
\label{eq:sign Delta w}
\Delta w_r(x)  =\Delta u(rx) -\Delta p_*(rx)   =   -r^2\chi_{\{u=0\}} (rx)  \le 0,
\end{equation}
hence $\Delta \tilde w_r$ is a nonpositive measure.
Note also that the contact set $\{u(r\,\cdot\,)=0 \}$ converge in the Hausdorff sense to $L=\{p_*=0\}$ as $r\to0$ (this follows from the uniform convergence of $r^{-2}u(rx)$ to $p_*$ as $r\to 0$). This implies that $q$ has a distributional Laplacian given by a nonpositive measure supported in $L$. Since $q\in W^{1,2}(B_1)$ (by Step 1) and $L$ has codimension 2 (and thus it is of zero harmonic capacity) it follows that  $q$  must be harmonic.

Let us prove next that $q$ is homogeneous. To this aim we show that 
\begin{equation}\label{Ngoal}
\lambda_* = \phi(R,q) : =\frac{R^{2-n}\int_{\partial B_R} |\nabla q|^2}{R^{1-n}\int_{\partial B_R}  q^2}\qquad \forall\,R\in (0,1].
\end{equation}
Indeed, by lower semicontinuity of the Dirichlet integral we have 
$$\phi(1,q) \leq \liminf_{\ell\to \infty}\phi(1,\tilde w_{r_{k_\ell}})=\liminf_{\ell\to \infty}\phi(1,w_{r_{k_\ell}})=\liminf_{\ell\to \infty}\phi(r_{k_\ell},w)=\lambda_*.
$$ 
Also, since $q$ is harmonic, it follows that $R\mapsto \phi(R,q)$ is nondecreasing (this follows from the classical Almgren frequency formula, or equivalently from the proof of Proposition \ref{lemAlmgren}), thus $\phi(R,q)\leq \lambda_*$ for all $R\in (0,1]$.

To show the converse inequality we apply Lemma \ref{Hincreasing} to $\tilde w_{r_{k_\ell}}$
and let $\ell\to \infty$ to obtain
\begin{equation}\label{decayl2}
\frac{1}{\rho^{2\lambda_*}}\ave_{\partial B_\rho} q^2 \le \ave_{\partial B_1} q^2  =1.
\end{equation}
But since $q$ is harmonic (so, in particular, $q\Delta q\equiv 0$) we have 
\[
\frac{H'_\lambda(R,q) }{H_\lambda(R,q)} = \frac 2R (\phi(R,q) -\lambda)
\]
(this is a classical identity that also follows from the proof of Lemma \ref{Hincreasing}).
Hence, if it was $\phi(R,q)<\lambda_*$ for some $R\in(0,1)$ then, choosing $\lambda:= \phi(R,q)$, we would have that $H_\lambda$ would be nonincreasing on $(0,R)$. In particular we would find  
\[
\frac{1}{\rho^{2\lambda}}\ave_{\partial B_\rho} q^2 \ge \frac{1}{R^{2\lambda}}\ave_{\partial B_R} q^2  >0 \quad \mbox{for $\rho\in (0,R)$},
\]
which contradicts \eqref{decayl2} for $\rho$ small since $\lambda<\lambda_*$.
Hence, we proved \eqref{Ngoal}.

Note that \eqref{Ngoal} says that the  Almgren frequency formula $\phi(R,q)$ is constantly equal to $\lambda_*$ for all $R\in (0,1]$. As a classical consequence,  $q$ is $\lambda_*$-homogeneous. Hence, since $q$ harmonic, it follows that $q$ is a $\lambda_*$-homogeneous harmonic polynomial with $\lambda_*\in \{2,3,4,5,\dots\}$ (recall that $\lambda_*\geq 2$, see Lemma \ref{lemblowup0}).

Finally, to complete the proof of (a), it suffices to combine Lemmas \ref{charq}  and \ref{lemmatrix} to obtain that  \eqref{howisD2q} holds when $\lambda_*=2$.

\smallskip

- {\em Step 3}. We now prove the first part of (b): if $m = n-1$, then $q$ must be a homogenous solution of the Signorini problem. 

Indeed, let $\tilde w_{r_{k_\ell}} \rightarrow q$ in $L^2(B_1)$.
We first show uniform semiconvexity and Lipschitz estimates  that are of independent interest and will be useful later on in the paper. 
Namely, let us prove the estimate
\begin{equation}\label{semic}
\partial^2_{\boldsymbol e\boldsymbol e} \tilde w_r \ge -C \quad \mbox{in }B_R,   \qquad \forall\, \boldsymbol e\in L\cap \mathbb S^{n-1}, \,\forall\,R<1,
\end{equation}
where  $C= C(n,R)$  ---in particular $C$ is independent of $r$.

For this, given a vector $\boldsymbol e \in \mathbb S^{n-1}$ and $h>0$, let
\[\delta^2_{\boldsymbol e,h} f:= \frac{f(\,\cdot\,+h\boldsymbol e)+f(\,\cdot\,-h\boldsymbol e)-2f}{h^2}\]
 denote a second order incremental quotient. For $\boldsymbol e\in L\cap \mathbb S^{n-1}$ we have $\delta^2_{\boldsymbol e,h} p_*\equiv 0$ (since $p_*$ is constant in the directions of $L$). Thus, since $\Delta u =1$ outside of  $\{u=0\}$ and $\Delta u\le 1$ everywhere, 
\[
%\begin{split}
\Delta \big(\delta^2_{\boldsymbol e,h} w_r \big)%&
 =  \frac{\Delta u\big(r (\,\cdot\,+h\boldsymbol e)\big)+\Delta u\big(r (\,\cdot\,-h\boldsymbol e)\big) - 2\Delta u\big(r (\,\cdot\,)\big) }{h^2}
%\\
%&
%\le 1+1-2 
\le 0  \hspace{2mm}\mbox{ in } B_1\setminus \{u(r\,\cdot\,)=0\}.
%\end{split}
\]
On the other hand, since $u\ge 0$ we have
\[
\delta^2_{\boldsymbol e,h} w_r = \delta^2_{\boldsymbol e,h}  u(r\,\cdot\,) \ge 0  \quad \mbox{ in } \{u(r\,\cdot \,)=0\}.
\]
As a consequence, the negative part of the second order incremental quotient $(\delta^2_{\boldsymbol e,h} \tilde w_r)_-$ is a (nonnegative) subharmonic function, and so is its limit $(\partial_{\boldsymbol e\boldsymbol e}^2\tilde w_r)_-$
(recall that $u\in C^{1,1}$, hence $\delta^2_{\boldsymbol e,h} \tilde w_r\to \partial_{\boldsymbol e\boldsymbol e}^2\tilde w_r$ a.e. as $h \to 0$).

Therefore, given any radius $R' \in (R,1)$, by the weak Harnack inequality (see for instance \cite[Theorem 4.8(2)]{CC95}) there exists $\epsilon=\epsilon(n) \in (0,1)$ such that
$$
\|(\partial_{\boldsymbol e\boldsymbol e}^2\tilde w_r)_- \|_{L^\infty(B_R)} \le C(n,R,R') \biggl(\int_{B_{R'}} (\partial_{\boldsymbol e\boldsymbol e}\tilde w_r)^\epsilon_-  \biggr)^{1/\epsilon} 
\leq C(n,R,R') \biggl(\int_{B_{R'}} |\partial_{\boldsymbol e\boldsymbol e}\tilde w_r|^\epsilon  \biggr)^{1/\epsilon}.
$$
Also, by standard interpolation inequalities, the $L^\epsilon$ norm (here we use $\epsilon<1$) can be controlled by the weak $L^1$ norm, namely
$$
\biggl(\int_{B_{R'}} |\partial_{\boldsymbol e\boldsymbol e}\tilde w_r|^\epsilon \biggr)^{1/\epsilon} \leq C(n,R')\,\sup_{t>0}t\bigl|\bigl\{|\partial_{\boldsymbol e\boldsymbol e}\tilde w_r| >t\bigr\}\cap B_{R'}\bigr|.
$$
Furthermore, by Calderon-Zygmund theory (see for instance \cite[Equation (9.30)]{GT01}), the right hand side above is controlled by
$\|\Delta \tilde w_r\|_{L^1(B_{R''})}+
\|\tilde w_r\|_{L^1(B_{R''})}$, with $R'' \in (R',1)$.
Finally, since $\Delta \tilde w_r \leq 0$, $\|\Delta \tilde w_r\|_{L^1(B_{R''})}$ is controlled by the $L^1$ norm of $\tilde w$ inside $B_1$: indeed, if $\chi$ is a smooth nonnegative cut-off function that is equal to $1$ in $B_{R''}$ and vanished outside $B_1$, then
\begin{equation}
\label{eq:control Delta w}
\|\Delta \tilde w_r\|_{L^1(B_{R''})}\leq -\int_{B_1}\chi\,\Delta \tilde w_r =
-\int_{B_1}\Delta \chi\, \tilde w_r\leq C(n,R'')\int_{B_1}|\tilde w_r|.
\end{equation}
In conclusion, choosing $R'=\frac{2R+1}{3}$ and $R''=\frac{R+2}{3}$ we obtain 
$$
\|(\partial_{\boldsymbol e\boldsymbol e}\tilde w_r)_- \|_{L^\infty(B_R)}\leq C(n,R)
  \|\tilde w_r\|_{L^1(B_{1})} \leq C(n,R)
$$
(recall that $\tilde w_r$ is uniformly bounded in $W^{1,2}(B_1)\subset L^1(B_1)$, see Step 1),
which proves \eqref{semic}.
 
 Note that, as a consequence of \eqref{semic}, the Laplacian of $\tilde w_r$ in the tangential directions is uniformly bounded from below.
Since $\Delta  \tilde w_r \le 0$ everywhere and $L$ is $(n-1)$-dimensional, this implies a uniform semiconcavity estimate in the direction orthogonal  to $L$, namely
\[
\partial^2_{\boldsymbol e'\boldsymbol e'} \tilde w_r \le C  \qquad \mbox{in } B_R ,\quad  \mbox{ for } \boldsymbol e'\in L^\perp \mbox{ with } |\boldsymbol e'|=1, 
\]
where, as before, $R<1$ and $C= C(n,R)$.

Thanks to the previous semiconvexity and semiconcavity estimates, we deduce in particular a uniform Lipschitz bound:
\begin{equation}
\label{eq:wk Lip}
|\nabla \tilde w_r| \le C(n,R)  \qquad \mbox{in } B_R\quad \forall\,R<1. 
\end{equation}
Hence, the convergence  $\tilde w_{r_{k_\ell}}\rightarrow q$ holds also locally uniformly inside $B_1$.

Now, recall that by Proposition \ref{lemAlmgren} we have
\begin{equation}
\label{eq:Sign}
\begin{split}
r\phi'(r,w) &\ge  \phi(r,w)\, \frac{ \left(r^{2-n} \int_{B_r} w\Delta w\right)^2 }{ r^{2-n} \int_{B_r}|\nabla w|^2  \ r^{1-n} \int_{\partial B_r} w^2 }\\
& =  \left(\frac{r^{2-n} \int_{B_r} w\Delta w }{  r^{1-n} \int_{\partial B_r} w^2 } \right)^2 = \left(\int_{B_1} \tilde w_r\Delta\tilde w_r\right)^2.
\end{split}
\end{equation}
Since
$$
\ave_{r_{k_\ell}}^{2r_{k_\ell}} r \phi'(r,w)\,dr \leq 2\int_{r_{k_\ell}}^{2r_{k_\ell}}  \phi'(r,w)\,dr=2\bigl(\phi(2r_{k_\ell},w)-\phi(r_{k_\ell},w)\bigr) \to 0 \qquad \text{as $\ell \to \infty$}
$$
(because $\phi(r,w)\to \lambda_*$ as $r\to 0$),  
using the mean value theorem we may choose $\bar r_{k_\ell}\in [r_{k_\ell}, 2r_{k_\ell}] $ such that $\bar r_{k_\ell} \phi'(\bar r_{k_\ell},w)\rightarrow 0$ as $\ell \to \infty$.
Hence, thanks to \eqref{eq:Sign} and \eqref{wLapw}, we deduce that, for $\rho_\ell := \bar r_{k_\ell}/ r_{k_\ell} \in [1,2]$,
\[\int_{B_1} \tilde w_{r_{k_\ell}}\Delta\tilde w_{r_{k_\ell}}\leq 
\int_{B_{\rho_\ell}} \tilde w_{r_{k_\ell}}\Delta\tilde w_{r_{k_\ell}} \rightarrow 0.
\]
Since $\Delta\tilde w_{r_{k_\ell}} \rightarrow  \Delta q$ weakly$^*$ as measures inside $B_1$, $w_{r_k} \rightarrow q$ strongly in $C^0_{\rm loc}(B_1)$,
and $\tilde w_r\Delta \tilde w_r\geq 0$, we obtain
\[
\int_{B_{R}} q \Delta q =0\qquad \forall\,R<1,
\]
therefore, letting $R\uparrow 1$, $q\Delta q\equiv 0$ inside $B_1$.

Now, since 
\begin{itemize}
\item  $\Delta w_r \le 0$ is supported on $\{u(r\,\cdot\,)=0 \}$, that converges to $L$ as $r\downarrow 0$
\item $w_r = (u-p_*)( r\,\cdot\, ) = u(r\,\cdot\,)\ge 0$  on $L$
\item $\tilde w_{r_{k_\ell}}\to q$ locally uniformly
\end{itemize}
in the limit we obtain that $\Delta q\le 0$, $\Delta q = 0$ outside of $L$, and $q\ge 0$ on $L$.
This proves that $q\in W^{1,2}(B_1)$ is a solution of the thin obstacle problem \eqref{TOP} inside $B_1$.

%The fact that $q$ solves the thin obstacle problem gives uniform interior $L^\infty$ estimates in terms of the $L^2$ norm of the solution (see \cite[Lemma 1]{AC04}).
%Hence, 
The same argument as the one used
in Step 2 for case (a) (which only used that $q\Delta q\equiv 0$)  shows that $q$ is $\lambda_*$-homogeneous inside $B_1$. In particular we can extend $q$ by homogeneity to the whole space, and $q$ satisfies \eqref{TOP} in $\R^n$.

\smallskip

- {\em Step 4}. We conclude the proof of (b) by showing that $\lambda_*\ge 2+\alpha_\circ$ for some dimensional constant $\alpha_\circ>0$.

We argue by compactness.
Observe that any blow-up $q$ satisfies
\begin{equation}\label{properties}
x\cdot\nabla q = \lambda_* q, \quad  \int_{\partial B_1}q^2 =1, \quad \Delta q\le 0, \quad q\Delta q=0,\quad q\ge 0 \mbox{ on }L, \quad q(0)=0.
\end{equation}
Also, by Lemma \ref{charq} we have that \eqref{orthogonality}  holds.
Now, if we had a sequence of functions $q^{(k)}$ satisfying \eqref{properties} with $\lambda_*^{(k)} \downarrow 2$,  then we would find some limiting  function $q^{(\infty)}$ satisfying
\eqref{properties} with $\lambda_*^{(\infty)}=2$ and \eqref{orthogonality}. 
Then $q^{(\infty)}$ would be a $2$-homogeneous solution of the thin obstacle problem and hence a quadratic harmonic polynomial (see for instance \cite[Lemma 1.3.4]{GP09}).
Thus, applying Lemma \ref{lemmatrix} with $m=n-1$ we find that, in an appropriate coordinate system,
\[
D^2p_*= 
\renewcommand\arraystretch{1.5}
\left(
\begin{array}{c|c}
1 &0_{n-1}^1
 \\
\hline
0^{n-1}_1 &  0_{n-1}^{n-1}
\end{array}
\right)
\quad
\mbox{and} 
\quad
D^2 q^{(\infty)}= 
\renewcommand\arraystretch{1.5}
\left(
\begin{array}{c|c}
t &0_{n-1}^1
 \\
\hline
0^{n-1}_1 &  -N
\end{array}
\right)
\]
where $N\ge 0$ with ${\rm tr}(N)=t>0$ (since $\|q^{(\infty)}\|_{L^2(\partial B_1)}=1$). However, since $q^{(\infty)}(0)=0$ and $q^{(\infty)}\ge 0$ on $L=\{p_*=0\} = {\rm ker}(D^2p_*) $, we must have $-N\ge 0$, a contradiction.
\end{proof}

We conclude this section with an interesting observation: the gap between the value of the frequency and $2$ controls the decay of the measure of the contact set (recall that $\phi(0^+,w)\geq 2$, see Lemma \ref{lemblowup0}).

\begin{proposition}\label{decayest}
Let $0$ be a singular point, $w:=u-p_*$,
 and $\lambda_*:=\phi(0^+,w).$
 Then
 $$
 \frac{|\{u=0\}\cap B_r|}{|B_r|}\leq Cr^{\lambda_*-2}\qquad \forall\,r>0.
 $$
In addition, the constant $C>0$ can be chosen uniformly at all singular points in a neighborhood of $0$.
\end{proposition}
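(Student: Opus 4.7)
The plan is to combine the PDE $\Delta w=-\chi_{\{u=0\}}$ with a cutoff/integration-by-parts trick and the polynomial $L^2$-decay of $w$ coming from Lemma~\ref{Hincreasing}. First, since $\Delta u=\chi_{\{u>0\}}$ and $\Delta p_*\equiv 1$,
\[
|\{u=0\}\cap B_r| = \int_{B_r}(-\Delta w)\,dx.
\]
Pick a cutoff $\varphi\in C_c^\infty(B_{2r})$ with $\chi_{B_r}\le\varphi\le 1$ and $|\Delta\varphi|\le C r^{-2}$. Using that $-\Delta w\ge 0$ and integrating by parts twice (the boundary terms vanish because $\varphi$ has compact support in $B_{2r}$),
\[
|\{u=0\}\cap B_r|\le \int \varphi(-\Delta w) = -\int w\,\Delta\varphi \le \frac{C}{r^2}\int_{B_{2r}}|w|.
\]

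Now I invoke Lemma~\ref{Hincreasing} with $\lambda=\lambda_*=\phi(0^+,w)$: the function $\rho\mapsto H_{\lambda_*}(\rho,w)$ is nondecreasing, so $H_{\lambda_*}(\rho,w)\le H_{\lambda_*}(1,w)=:C_0$, i.e.
\[
\int_{\partial B_\rho} w^2 \le C_0\,\rho^{n-1+2\lambda_*}\qquad\text{for }\rho\le 1.
\]
Integrating in $\rho\in(0,2r]$ gives $\int_{B_{2r}}w^2\le C r^{n+2\lambda_*}$, and Cauchy--Schwarz yields $\int_{B_{2r}}|w|\le |B_{2r}|^{1/2}\|w\|_{L^2(B_{2r})}\le Cr^{n+\lambda_*}$. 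Plugging this back into the previous display produces $|\{u=0\}\cap B_r|\le C r^{n+\lambda_*-2}$, and dividing by $|B_r|\sim r^n$ gives the claimed bound.

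For the uniformity in nearby singular points $x_\circ$, restrict to $x_\circ\in B_{1/2}$ (say) and work with $w_{x_\circ}(x):=u(x)-p_{*,x_\circ}(x-x_\circ)$ on $B_{1/2}(x_\circ)$. Repeating the argument requires bounding $H_{\lambda_{*,x_\circ}}(\tfrac12,w_{x_\circ})$ uniformly in $x_\circ$. This holds because $\|w_{x_\circ}\|_{L^\infty(B_{1/2}(x_\circ))}$ is bounded by the global $C^{1,1}$ norm of $u$ together with the uniform bound on $D^2p_{*,x_\circ}\in\mathcal M$, and one also gets a universal upper bound $\lambda_{*,x_\circ}\le\phi_{x_\circ}(\tfrac12,w_{x_\circ})\le\Lambda$ from the uniformly controlled $W^{1,2}/L^2$ norms. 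The proof has no real obstacle beyond assembling these ingredients; the only delicate point is verifying that the constants in Lemma~\ref{Hincreasing} depend on $x_\circ$ only through quantities that are locally uniformly bounded.
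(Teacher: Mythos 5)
Your proof is correct and follows essentially the same approach as the paper's: a cutoff/integration-by-parts trick to control the measure of the contact set by an $L^1$ norm of $w$, combined with the $L^2$-decay supplied by the monotonicity of $H_{\lambda_*}$ from Lemma~\ref{Hincreasing}. The paper phrases the same computation in terms of the normalized blow-ups $\tilde w_r$ and the uniform $W^{1,2}$ bound from Proposition~\ref{propblowup}, while you work with $w$ directly and invoke the $L^2$ decay explicitly, but the underlying mechanism is identical.
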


\begin{proof}
Let
$w_r$ and $\tilde w_r$ be defined as in the statement of Proposition \ref{propblowup}.
Since $\tilde w_r$ is bounded in $W^{1,2}(B_1)$ (see Step 1 in the proof of Proposition \ref{propblowup}) and $\Delta w_{r}\leq 0$ (see \eqref{eq:sign Delta w}), we can bound the mass of $\Delta \tilde w_k$ inside $B_{1/2}$ by considering a smooth nonnegative cut-off function $\chi$ that is equal to $1$ in $B_{1/2}$ and vanished outside $B_1$, and then argue as in \eqref{eq:control Delta w}.
In this way we get
$$
\int_{B_{1/2}}|\Delta \tilde w_r|\leq C\|\tilde w_r\|_{L^1(B_1)}\leq C.
$$
But since 
$$
|\Delta \tilde w_r|=r^2\frac{\chi_{\{u(r\,\cdot\,)=0\}}}{\|w_r\|_{L^2(\partial B_1)}}
$$
and $\|w_r\|_{L^2(\partial B_1)}\leq Cr^{\lambda_*}$ (see \eqref{eq:decay w}),
we conclude that
$$
r^{2-\lambda_*}\frac{|\{u=0\}\cap B_{r/2}|}{|B_{r/2}|}=r^{2-\lambda_*}\frac{|\{u(r\,\cdot\,)=0\}\cap B_{1/2}|}{|B_{1/2}|}\leq C,
$$
as desired.
\end{proof}

Note that the density bound is actually stronger around points corresponding to lower dimensional strata $\{\Sigma_m\}_{1\leq m \leq n-2}$. Indeed, since $\Delta \tilde w_r \leq 0$ and any limit of $\tilde w_r$ is harmonic (see Proposition \ref{propblowup}(a)), it follows that 
$\int_{B_{1/2}}|\Delta \tilde w_r| \to 0$
as $r\to 0$,
so in this case the constant $C$ appearing in the statement can be replaced by $o(1)$.

\begin{remark}
In the case when $0 \in \Sigma_{n-1}$, we can actually prove a stronger estimate, namely that $\{u=0\}\cap B_r$ is contained in a $r^{\lambda_*-1}$-neighborhood of $L=\{p_*=0\}$.
To show this, note that \eqref{eq:wk Lip}
implies that
\begin{equation} \label{Lipchitzest}
|\nabla \tilde w_r|\leq C\qquad \text{in $B_{1/2}$},\quad  \forall\,r>0,
\end{equation}
or equivalently
\begin{equation}
\label{eq:grad u p}
|\nabla u(x)-\nabla p_*(x)|\leq C\frac{\|w_r\|_{L^2(\partial B_1)}}{r}
\qquad \forall\, x\in B_{r/2}.
\end{equation}
Observe now that $\nabla u= 0$ on $\{u=0\}$ (since $u\in C^{1,1}$ and $u\geq 0$). Also, since $p_*(x)=\frac12 (\boldsymbol e\cdot x)^2$ for some $\boldsymbol e \in \mathbb S^{n-1}$,
$$
|\nabla p_*(x)|=\,{\rm dist}(x,L)\qquad \forall\,x \in \R^n.
$$
Hence, it follows by \eqref{eq:grad u p} that
\[
{\rm dist}(x,L)\leq C\frac{\|w_r\|_{L^2(\partial B_1)}}{r}
\qquad \forall\, x\in B_{r/2}\cap \{u=0\}.
\]
Since $\|w_r\|_{L^2(\partial B_1)} \leq Cr^{\lambda_*}$ (see \eqref{eq:decay w}),
we conclude that
\begin{equation}
\label{eq:dist L}
{\rm dist}(x,L)\leq Cr^{\lambda_*-1}
\qquad \forall\, x\in B_{r/2}\cap \{u=0\}.
\end{equation}
\end{remark}

\section{Proof of Theorem \ref{thm:main}}  \label{sect:proof} 

In this section we prove Theorem \ref{thm:main}. This will require a fine analysis of the possible values of the frequency at singular points.
We begin with the simple case $n=2$.
\begin{lemma}\label{possiblefreq}
Let $n=2$ and $0$ be a singular point in $\Sigma_1$. Then $\lambda_* := \phi(0^+, u-p_*)$ belongs to the set 
\[
 \{3, 4,5,6,\ldots\} \cup { \textstyle \big\{\frac 7 2, \frac{11}{2}, \frac {15}{ 2},  \frac {19}{ 2}, \ldots\big\}  }.
 \]
 In particular $\alpha_\circ \geq 1$ (here $\alpha_\circ$ is as in Proposition \ref{propblowup}(b)).
\end{lemma}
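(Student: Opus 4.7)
Since $0 \in \Sigma_1$ with $n=2$, Proposition~\ref{propblowup}(b) (applied with $m = n-1 = 1$) tells us that $\lambda_* > 2$ strictly, and that any subsequential limit $q$ of the renormalized sequence $\tilde w_r$ is a nontrivial $\lambda_*$-homogeneous Signorini solution with obstacle $0$ on the line $L = \{p_* = 0\}$. After a rotation we may take $L = \{x_2 = 0\}$, and the plan is to classify such $q$ explicitly by decomposing it into its even and odd parts with respect to $x_2 \mapsto -x_2$.

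Write $q = q^e + q^o$. The odd part $q^o$ vanishes on $L$ and has matching one-sided normal derivatives across $L$ (an automatic consequence of oddness), so there is no distributional jump of $\partial_{x_2} q^o$ across $L$; combined with the fact that $q^o$ is harmonic on $\R^2 \setminus L$, this yields $\Delta q^o \equiv 0$ on all of $\R^2$. Hence $q^o$ is a globally defined $\lambda_*$-homogeneous harmonic function on $\R^2$, which forces it to be a homogeneous harmonic polynomial (otherwise it would be multi-valued on $\R^2 \setminus \{0\}$); the odd-in-$x_2$ ones have the form $c\,r^{\lambda_*}\sin(\lambda_* \theta)$ with $\lambda_* \in \{1, 2, 3, \dots\}$, so $q^o \not\equiv 0$ forces $\lambda_* \in \Z_{\geq 1}$.

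The even part $q^e$ is $\lambda_*$-homogeneous, harmonic off $L$, and inherits from $q$ the conditions $q^e \geq 0$ on $L$, $\Delta q^e \leq 0$, and $q^e \Delta q^e = 0$ (using $q^o = 0$ on $L$ and $\Delta q^o = 0$). By evenness, the restriction $h := q^e|_{\{x_2 \geq 0\}}$ is a classical $\lambda_*$-homogeneous solution of the thin obstacle problem on the upper half plane with obstacle $0$ on the entire axis $\{x_2 = 0\}$. Writing $h(r,\theta) = r^{\lambda_*}\bigl(A \cos(\lambda_* \theta) + B \sin(\lambda_* \theta)\bigr)$ for $\theta \in [0,\pi]$ and imposing the Signorini conditions $h \geq 0$, $-\partial_{x_2} h \geq 0$, $h\,\partial_{x_2}h = 0$ at both endpoints $\theta = 0, \pi$: complementarity at $\theta = 0$ yields $AB = 0$, while complementarity at $\theta = \pi$ gives $\sin(2\lambda_*\pi) = 0$, so $\lambda_* \in \tfrac{1}{2}\Z$. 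A brief case analysis on whether $A = 0$ or $B = 0$, combined with the sign inequalities at $\theta = \pi$ on $\sin(\lambda_*\pi)$ and $\cos(\lambda_*\pi)$, pins $\lambda_*$ down to the set $\{1, 2, 3, 4, \dots\}\cup\{3/2, 7/2, 11/2, 15/2, \dots\}$.

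Combining the odd and even cases and intersecting with the strict bound $\lambda_* > 2$ from Proposition~\ref{propblowup}(b) leaves precisely $\lambda_* \in \{3, 4, 5, \dots\}\cup\{7/2, 11/2, 15/2, 19/2, \dots\}$, as claimed, and in particular $\lambda_* \geq 3$, so $\alpha_\circ \geq 1$. The delicate ingredient in the argument is showing $\Delta q^o \equiv 0$ across $L$: because $L$ has positive harmonic capacity in $\R^2$, one cannot invoke capacity removal, and the argument really relies on the oddness to cancel the normal-derivative jump distributionally.
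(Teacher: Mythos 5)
Your proof is correct and follows essentially the same structure as the paper's: decompose the blow-up $q$ into its even and odd parts with respect to $L$, observe that the odd part is globally harmonic (integer homogeneity) while the even part is a symmetric Signorini solution (homogeneity in $\{1,2,3,\ldots\}\cup\{3/2,7/2,11/2,\ldots\}$), and intersect the union with the constraint $\lambda_*>2$ from Proposition~\ref{propblowup}(b). The only difference is presentational: the paper cites the known classification of symmetric two-dimensional Signorini homogeneities (referring to \cite{FS17}), whereas you carry out the separation of variables $r^{\lambda_*}(A\cos\lambda_*\theta + B\sin\lambda_*\theta)$ and the complementarity case analysis at $\theta = 0,\pi$ explicitly, and you spell out why the oddness kills the distributional jump of $\partial_{x_2}q^{\rm odd}$ across $L$ (a point the paper leaves as ``easily seen''). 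Both are fine; your version is self-contained at the cost of a few extra lines.
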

\begin{proof}
From Proposition \ref{propblowup}(b) we have that any possible blow-up $q$ is a $\lambda_*$-homogeneous solutions of the Signorini problem in two dimensions with obstacle $0$ on $L$ and $\lambda_*\ge 2+\alpha_\circ>2$. In dimension two, homogeneous solutions to the Signorini problem that are symmetric with respect to $L$ are completely classified via a standard argument by separation of variables, and the set of their possible homogeneities is
\[
 \{1,2,3,4,5\ldots\} \cup { \textstyle \big\{\frac 3 2, \frac 7 2, \frac{11}{2}, \frac {15}{ 2}, \frac {19}{ 2}, \ldots\big\} }
 \]
 (see for instance \cite{FS17}).
 In our case $q$ may also have a odd part. However, the odd part is easily seen to be harmonic, hence its possible homogeneity belongs to the set 
 \[
 \{1,2,3,4,5,\ldots\} .
 \]
 In conclusion
 $$
 \lambda_* \in  \{1,2,3, 4,5,\ldots\} \cup { \textstyle \big\{\frac 3 2,\frac 7 2, \frac{11}{2}, \frac {15}{ 2},  \frac {19}{ 2}, \ldots\big\}  }.
 $$
Since $\lambda_*>2$, the lemma follows.
\end{proof}

As explained in the introduction, our main goal is to prove that the set of points with frequency less than $3$ is small. For this, we need to understand what happens when too many singular points accumulate around another singular point. This is the purpose of the next two lemmata: the first concerns the case $m\leq n-2$, and the second deals with the case $m=n-1$.

\begin{lemma}\label{codge2}
Let $n\ge 3$ and suppose that  $0$ is a singular point. Assume that $m := \dim(L) \le n-2$, and that 
there is a sequence of singular points $x_k\to 0$ and radii $r_k\downarrow 0$ with $|x_k|\le r_k/2$ such that 
\[
\tilde w_{r_k} := \frac{(u-p_*)(r_k \,\cdot\, )}{\|(u-p_*)(r_k\,\cdot\,)\|_{L^2(\partial B_1)}} \rightharpoonup q
\qquad \text{in $W^{1,2}(B_1)$,}
\]
and $y_k :=\frac{x_k}{r_k} \to y_\infty$. 
Then $y_\infty\in L$ and $q(y_\infty) =0.$
\end{lemma}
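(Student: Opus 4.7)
The whole argument will hinge on the identity
\begin{equation*}
\tilde w_{r_k}(y_k)\;=\;-\frac{p_*(y_k)}{\varepsilon_{r_k}},\qquad
\varepsilon_{r_k}:=\frac{\|w(r_k\,\cdot\,)\|_{L^2(\partial B_1)}}{r_k^2}\;\to\;0^+,
\end{equation*}
which holds because $x_k$ is a free-boundary point: $u(x_k)=0$ gives $w(x_k)=-p_*(x_k)=-r_k^2 p_*(y_k)$ by the $2$-homogeneity of $p_*$.

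For the first claim $y_\infty\in L$, I argue by contradiction. If $p_*(y_\infty)>0$, then $p_*>0$ on an open neighborhood $U$ of $y_\infty$ with $\bar U\subset B_1\setminus L$. The Hausdorff convergence of the contact set of $\tilde u_{r_k}:=u(r_k\,\cdot\,)/r_k^2$ to $L$ (coming from $\tilde u_{r_k}\to p_*$ uniformly) forces $\Delta\tilde w_{r_k}\equiv 0$ in $U$ for $k$ large. Since $\{\tilde w_{r_k}\}$ is uniformly bounded in $W^{1,2}(B_1)$ (Step~1 of the proof of Proposition~\ref{propblowup}), interior estimates for harmonic functions give locally uniform convergence $\tilde w_{r_k}\to q$ in $U$, so $\tilde w_{r_k}(y_k)\to q(y_\infty)\in\R$, contradicting $-p_*(y_k)/\varepsilon_{r_k}\to-\infty$.

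For $q(y_\infty)=0$, the upper bound $q(y_\infty)\le 0$ follows from the superharmonicity $\Delta\tilde w_r\le 0$ (see \eqref{eq:sign Delta w}): the mean value inequality gives $\tilde w_{r_k}(y_k)\ge\ave_{B_\rho(y_k)}\tilde w_{r_k}$, and combining the strong $L^2$-convergence $\tilde w_{r_k}\to q$ on $B_1$ (Rellich-Kondrachov applied to the weak $W^{1,2}$ convergence) with continuity of the harmonic polynomial $q$ yields $\liminf_k\tilde w_{r_k}(y_k)\ge q(y_\infty)$; the identity $\tilde w_{r_k}(y_k)\le 0$ then forces $q(y_\infty)\le 0$.

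For the reverse inequality $q(y_\infty)\ge 0$ I will exploit the obstacle constraint $\tilde u_{r_k}=p_*+\varepsilon_{r_k}\tilde w_{r_k}\ge 0$: since $p_*\equiv 0$ on $L$, it yields $\tilde w_{r_k}|_L\ge 0$ pointwise. To transfer this pointwise information to the limit $q$, I invoke the tangential semiconvexity $\partial^2_{\boldsymbol e\boldsymbol e}\tilde w_r\ge -C(n,R)$ for $\boldsymbol e\in L\cap\S^{n-1}$ in $B_R$, whose derivation in Step~3 of the proof of Proposition~\ref{propblowup}(b) only uses $\delta^2_{\boldsymbol e,h}p_*\equiv 0$ for $\boldsymbol e\in L$ and so applies for arbitrary $m\le n-1$. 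Uniform semiconvexity together with $\tilde w_{r_k}|_L\ge 0$ makes $\tilde w_{r_k}|_L+\tfrac{C}{2}|\,\cdot\,|^2$ a sequence of locally equi-Lipschitz convex functions on $L\cap B_R$; a slicing argument in the $L^\perp$-directions combined with the $L^2$-convergence on $B_1$ then passes $\tilde w_{r_k}|_L\ge 0$ to the limit to obtain $q|_L\ge 0$ near $y_\infty$, in particular $q(y_\infty)\ge 0$. The delicate point is exactly this final passage: because $L$ has codimension $n-m\ge 2$, $L^2$-convergence on $B_1$ carries no direct information on $L$, and the uniform tangential semiconvexity is the essential tool that prevents $\tilde w_{r_k}$ from developing spike-like behavior near $L$.
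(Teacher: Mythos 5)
Your argument for $y_\infty\in L$ is correct (though more elaborate than necessary: from $u(x_k)=0$ and $(u-p_*)(r\,\cdot\,)=o(r^2)$ one reads off directly $p_*(y_k)=o(1)\to 0$, which is what the paper does), and your superharmonicity/mean-value argument for the one-sided bound $q(y_\infty)\le 0$ is also fine. But the last step --- the passage from $\tilde w_{r_k}|_L\ge 0$ to $q|_L\ge 0$ --- is a genuine gap, and it cannot be repaired with the tools you invoke.

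The difficulty is precisely the one you flag: when $n-m\ge 2$, the plane $L$ has zero $W^{1,2}$-capacity, so $L^2(B_1)$ convergence carries no information whatsoever about traces on $L$. Tangential semiconvexity does not save the day, because it controls only derivatives \emph{along} $L$ and is completely blind to what happens as one approaches $L$ in the $L^\perp$-directions. To see that the implication you want is false with exactly the ingredients you list, take $L=\{x_1=x_2=0\}\subset\R^n$ ($n\ge 4$, $m=n-2$), let $q$ be any $2$-homogeneous harmonic polynomial with $\inf_{L\cap B_1} q=-M/2<0$ (e.g.\ $q=x_3x_4$), and set $g_k(x)=M\max\bigl(0,\,1+\log\mathrm{dist}(x,L)/\log\delta_k\bigr)$ with $\delta_k\downarrow 0$. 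Then $g_k\equiv M$ on $L$, $g_k\to 0$ in $W^{1,2}(B_1)$ (a direct computation gives $\int|\nabla g_k|^2\lesssim M^2/|\log\delta_k|\to 0$), $\Delta g_k\le 0$ in the sense of distributions, and $\partial^2_{\boldsymbol e\boldsymbol e}g_k\equiv 0$ for $\boldsymbol e\in L$ because $g_k$ depends only on $\mathrm{dist}(\cdot,L)$. The sequence $f_k:=q+g_k$ then satisfies all of: $f_k|_L\ge M/2>0$, $\Delta f_k\le 0$, uniform tangential semiconvexity, $f_k\to q$ in $W^{1,2}(B_1)$ --- yet $q|_L\not\ge 0$. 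So your ``slicing plus semiconvexity'' mechanism proves nothing, even qualitatively. Moreover, $q|_L\ge 0$ near $y_\infty$ is simply not a conclusion of the lemma; it is false for typical harmonic blow-ups when $m\le n-2$ (the Signorini-type sign constraint on $L$ is special to the $m=n-1$ stratum).

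A useful diagnostic: your whole argument only uses that $u(x_k)=0$, never that $x_k$ is a \emph{singular} point. But the singularity of $x_k$ is the crux. The paper's proof applies Monneau's monotonicity (Corollary~\ref{monneau}) at $x_k$ with the fixed polynomial $p_*=p_{*,0}\in\mathcal P$ (legitimate since Corollary~\ref{monneau} holds for every $p\in\mathcal P$, not just $p_{*,x_k}$), rescales, and passes to the limit to obtain the quantitative second-order decay
\[
\rho^{-4}\ave_{\partial B_\rho}\bigl|q(y_\infty+x)+c_\infty+b_\infty\cdot x\bigr|^2\le C,\qquad b_\infty\perp L.
\]
This forces the affine part of $q(y_\infty+\cdot)$ to be $-(c_\infty+b_\infty\cdot x)$, hence $\nabla_L q(y_\infty)=0$; since $y_\infty\in L$ and $q$ is $\lambda_*$-homogeneous, Euler's identity $\lambda_* q(y_\infty)=y_\infty\cdot\nabla q(y_\infty)=y_\infty\cdot\nabla_L q(y_\infty)=0$ gives $q(y_\infty)=0$. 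This is a strictly stronger conclusion ($q$ vanishes to first order tangentially at $y_\infty$) than the sign condition you were chasing, and it is obtained by exploiting the monotonicity structure at the nearby singular point, not by a trace-passage argument on $L$.

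In short: your bound $q(y_\infty)\le 0$ is correct but insufficient, and the proposed route to the reverse inequality fails for a structural reason (zero capacity of $L$) that tangential semiconvexity cannot overcome. The missing idea is the application of Monneau's monotonicity formula at $x_k$ with the ``wrong'' polynomial $p_*=p_{*,0}$, which is exactly what makes the hypothesis that $x_k$ is singular essential.
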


\begin{proof}
Since $(u-p_*)(r\,\cdot\,) = o(r^2)$  and $u(r_ky_k)=u(x_k)=0$, it follows that $p_*( r_ky_k) = r^2_kp_*(y_k) = o(r_k^2)$  as $r_k\to 0$, therefore $p_*(y_\infty)=0$. This proves that $y_\infty\in L = \{p_*=0\}$. We now prove that $q(y_\infty)=0$.

Note that, since $q$ is homogeneous (see Proposition \ref{propblowup}), if $y_\infty=0$ then the result is trivial. So we can assume that $|y_\infty|>0$.

We now use that $x_k$ is a singular point for $u$. Thanks to Lemma \ref{lemblowup0}
applied at $x_k$ with $p=p_{*}$, we know that the frequency of $u(x_k+\,\cdot\,)-p_{*}$ is at least $2$,
therefore
\[\phi\big(1/2,  u(r_k(y_k +\,\cdot\,)) -p_*(r_k\,\cdot\,) \big) \ge 2.
\]
(Note that here $p_{*}$ is the quadratic polynomial of $u$ at $0$, not at $x_k$!)
Equivalently, recalling the definition of $\tilde w_{r_k}$, we have
\begin{equation}\label{freqge2}
2\le \frac{1}{2}\frac{\int_{B_{1/2}}\big |\nabla \tilde w_{r_k}(y_k +\cdot ) +  h_{r_k} ^{-1} \nabla \big(p_*(r_k y_k+r_k \,\cdot\, )- p_*(r_k \,\cdot\, )\big )\big|^2} 
{\int_{\partial B_{1/2}}\big |\tilde w_{r_k}(y_k +\cdot ) + h_{r_k}^{-1}\big(p_*(r_k y_k+r_k \,\cdot\, )- p_*(r_k \,\cdot\, ) \big)\big|^2},
\end{equation}
where $h_{r_k} := \|(u-p_*)(r_k\,\cdot\,)\|_{L^2(\partial B_1)}$.
Note that, because $p_*$ is a quadratic polynomial that vanishes on $L$, we have 
\begin{equation}
\label{eq:b c}
h_{r_k}^{-1}\big(p_*(r_ky_k+r_k \,\cdot\,  )- p_*(r_k \,\cdot\, )\big) = c_k + b_k \cdot x
\end{equation}
for some constant $c_k \in \R$ and some vector $b_k \in \R^n$ with $b_k \perp L$.

We now observe that, since $|y_k|\leq 1/2$ we have $B_{1/2}(y_k) \subseteq B_1$, therefore
\[
\int_{B_{1/2}} \big|\nabla\tilde w_{r_k}(y_k + \,\cdot\,) \big|^2 + \int_{\partial B_{1/2}} \big|\tilde w_{r_k}(y_k + \,\cdot\,) \big|^2 \le \|\tilde w_{r_k}\|^2_{W^{1,2}(B_1)} \le C.
\]
We claim that
\[
|c_k| \le C \quad \mbox{and} \quad |b_k|\le C, \qquad \mbox{with $C$ independent of $k$}.
\]
Indeed, if this was false, dividing by $(|c_k|+|b_k|)^2$ both the numerator and the denominator in \eqref{freqge2}, we would obtain 
\[
2\le \frac{1}{2}\frac{\int_{B_{1/2}}\big |\nabla (\varepsilon_k(x) + \bar c_k + \bar b_k \cdot x)\big|^2} {\int_{\partial B_{1/2}}\big | \varepsilon_k(x) + \bar c_k + \bar b_k \cdot x)\big|^2},
\]
where $\bar c_k := c_k/ (|c_k|+|b_k|)$, $\bar b_k := b_k/ (|c_k|+|b_k|)$, and $\int_{B_{1/2}}|\nabla \varepsilon_k|^2+\int_{\partial B_{1/2}}\varepsilon_k^2 \rightarrow 0$. Thus, in the limit we would find 
\[
2\le \frac{1}{2}\frac{\int_{B_{1/2}}\big |\nabla (\bar c_\infty + \bar b_\infty \cdot x)\big|^2} {\int_{\partial B_{1/2}}| \bar  c_\infty  + \bar b_\infty  \cdot x|^2} = \frac{ |\bar b_\infty|^2}{4n|\bar c_\infty|^2 + |\bar b_\infty|^2} \le 1,
\]
a contradiction that proves the claim.

Thanks to the claim, up to a subsequence, $c_k \to c_\infty$ and $b_k\to b_\infty$ as $k \to \infty$, with $b_\infty \perp L$.
Note now that, since $x_k$ is a singular point, it follows by Corollary \ref{monneau} that, for all $\rho\in(0,1/2)$,
\[
\frac{1}{\rho ^4} \int_{\partial B_1} | u(x_k +r_k \rho \, \cdot\,)  - p_*(r_k \rho \, \cdot\,)|^2 \le 2^4\int_{\partial B_1} \big| u\big(x_k +{\textstyle \frac {r_k} 2}  \, \cdot\,\big)  - p_*\big({\textstyle \frac {r_k} 2} \, \cdot\,\big)\big|^2  ,
\]
or equivalently, recalling \eqref{eq:b c},
\begin{equation}\label{control}
\frac{1}{\rho ^4} \ave_{\partial B_\rho} | \tilde w_{r_k}(y_k + x)  + c_k +   b_k\cdot x|^2    \le 2^4\ave_{\partial B_{1/2}} | \tilde w_{r_k}(y_k + x)  + c_k +   b_k\cdot x|^2 .
\end{equation}
Hence, in the limit (note that $B_{\rho}(y_\infty)\subset B_1$ for all $\rho \leq 1/2$)
\begin{equation}\label{controllim}
\frac{1}{\rho^4} \ave_{\partial B_\rho} | q(y_\infty  + x)  + c_\infty +   b_\infty\cdot x |^2   \le 2^4\ave_{\partial B_{1/2}} | q(y_\infty + x)  + c_\infty +   b_\infty\cdot x|^2\qquad \forall\,\rho\in (0,1/2).
\end{equation}
Since $q$ is a homogeneous harmonic function, $y_\infty \in L$ with $|y_\infty|>0$, and $b_\infty$ is orthogonal to $L$, it follows by \eqref{controllim} that 
the gradient of $q$ in the directions of $L$ must vanish at $y_\infty$, namely $\nabla_L q(y_\infty) =0$. Hence, by homogeneity we find  $q(y_\infty)=0$, as desired.
\end{proof}

\begin{lemma}\label{cod1}
Let $n\ge 2$ and suppose that  $0$ is a singular point with $m:=\dim(L) =n-1$. Assume that
there is a sequence of singular points $x_k\to 0$  with $x_k\in \Sigma_{n-1}$, and let $r_k \downarrow 0$ with $ |x_k| \le r_k/2$.
Denote
\[
 \lambda_*:=\phi(0^+,u-p_*)\qquad \mbox{and}\qquad  \lambda_{*,x_k}:=\phi\big(0^+,u(x_k+\,\cdot\,)-p_{*,x_k}\big).
\]
Suppose that 
\[
\tilde w_{r_k} := \frac{(u-p_*)(r_k \,\cdot\, )}{\|(u-p_*)(r_k\,\cdot\,)\|_{L^2(\partial B_1)}} \rightharpoonup q
\qquad \text{in $W^{1,2}(B_1)$,}
\]
and $y_k :=\frac{x_k}{r_k} \to y_\infty$. 
Also, let $\R\boldsymbol  e=L^\perp$ with $|\boldsymbol  e|=1$, and denote by $q^{\rm even}$ and $q^{\rm odd}$ the even and odd part of $q$ with respect to $L$, namely
$$
q^{\rm even}(x) = \frac 1 2 \big\{ q(x)  +q\big(x-2(\boldsymbol e\cdot x)\boldsymbol e\big)  \big\} ,\qquad
q^{\rm odd}(x) = \frac 1 2 \big\{ q(x)  -q\big(x-2(\boldsymbol e\cdot x)\boldsymbol e\big)  \big\}.
$$
Finally, let $\alpha_\circ$ be as in Proposition \ref{propblowup}(b).

Then $y_\infty\in L$, and  for $\lambda := \inf_k\{ \lambda_{*,x_k}\} \ge 2+\alpha_\circ$ we have
\begin{equation}
\label{eq:q even y infty}
\rho^{-2 \lambda} \ave_{\partial B_\rho} q^{\rm even}(y_\infty+ x)^2  \le 
2^{2\lambda} \ave_{\partial B_{1/2}}  q^{\rm even}(y_\infty+ x)^2 \quad \forall\,\rho\in(0,1/2).
\end{equation}
In addition, if $\lambda_*<3$ then
\begin{equation}
\label{eq:q y infty}
\rho^{-2\lambda} \ave_{\partial B_\rho} q(y_\infty+ x)^2  \le 2^{2\lambda} \ave_{\partial B_{1/2}}  q(y_\infty+ x)^2
\qquad \forall\,\rho\in(0,1/2).
\end{equation}
\end{lemma}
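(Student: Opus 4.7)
The plan is to mimic the argument of Lemma \ref{codge2}, but to exploit the sharper frequency bound $\lambda_{*,x_k}\ge 2+\alpha_\circ$ that Proposition \ref{propblowup}(b) provides at singular points in $\Sigma_{n-1}$. Concretely, I would apply Lemma \ref{Hincreasing} at $x_k$ with the polynomial $p=p_{*,x_k}$ and $\lambda=\inf_k\lambda_{*,x_k}\le\lambda_{*,x_k}$, instead of the weaker Monneau version with $\lambda=2$. The inclusion $y_\infty\in L$ follows exactly as in Lemma \ref{codge2}: since $x_k$ is singular we have $u(x_k)=0$ and $(u-p_*)(x_k)=o(|x_k|^2)=o(r_k^2)$, hence $p_*(y_\infty)=\lim_k p_*(y_k)=0$.

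After rescaling by $r_k$, dividing by $h_{r_k}^2:=\|(u-p_*)(r_k\,\cdot\,)\|_{L^2(\partial B_1)}^2$, and expanding
$p_*(x_k+r_kz)=p_*(x_k)+r_k\nabla p_*(x_k)\cdot z+r_k^2p_*(z)$, Lemma \ref{Hincreasing} gives, for every $\rho\in(0,1/2]$,
\[
\rho^{-2\lambda}\ave_{\partial B_\rho}\bigl[\tilde w_{r_k}(y_k+z)+c_k+b_k\cdot z+Q_k(z)\bigr]^2\le 2^{2\lambda}\ave_{\partial B_{1/2}}\bigl[\,\cdots\,\bigr]^2,
\]
with $c_k:=h_{r_k}^{-1}p_*(x_k)$, $b_k:=h_{r_k}^{-1}r_k\nabla p_*(x_k)$, and $Q_k(z):=h_{r_k}^{-1}r_k^2[p_*(z)-p_{*,x_k}(z)]$. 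The distance bound \eqref{eq:dist L}, i.e.\ $|\boldsymbol e\cdot x_k|\le Cr_k^{\lambda_*-1}$ (valid since $x_k\in\{u=0\}$), combined with $h_{r_k}\gtrsim r_k^{\lambda_*-\varepsilon}$ (from the frequency formula) and $\lambda_*>2$, gives $c_k,b_k\to 0$. Boundedness of $Q_k$ follows by a standard compactness/contradiction argument: were $|Q_k|\to\infty$, normalizing and passing to the limit would yield a nonzero $2$-homogeneous $\overline Q_\infty$ with $\rho^{-2\lambda}\ave_{\partial B_\rho}\overline Q_\infty^2\le C$; since $\ave_{\partial B_\rho}\overline Q_\infty^2\asymp\rho^4$, one gets $\rho^{4-2\lambda}\le C$ as $\rho\downarrow 0$, contradicting $\lambda>2$.

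The main geometric input is that the $\sigma$-even part of $Q_\infty$ vanishes. Writing $p_{*,x_k}=\tfrac12(\boldsymbol e_k\cdot z)^2$ with $\boldsymbol e_k=\cos\theta_k\,\boldsymbol e+\sin\theta_k\,\boldsymbol v_k$, $\boldsymbol v_k\in L$, $\theta_k\to 0$, one checks that the $\sigma$-even part of $Q_k$ is of order $\theta_k^2\,h_{r_k}^{-1}r_k^2=O(\theta_k)$ times the (bounded) $\sigma$-odd part, so it vanishes in the limit. Passing to the limit in the displayed inequality---using the locally uniform convergence $\tilde w_{r_k}(y_k+\cdot)\to q(y_\infty+\cdot)$, a consequence of the Lipschitz bound \eqref{eq:wk Lip}---produces the same inequality with $q(y_\infty+z)+Q_\infty^{\rm odd}(z)$ replacing the bracket, where $Q_\infty^{\rm odd}$ denotes the $\sigma$-odd part of $Q_\infty$. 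Because $y_\infty\in L$ is $\sigma$-fixed and each $\partial B_\rho$ is $\sigma$-symmetric, the $\sigma$-even and $\sigma$-odd components are $L^2(\partial B_\rho)$-orthogonal; the $\sigma$-even part of the integrand is exactly $q^{\rm even}(y_\infty+z)$, and isolating it yields \eqref{eq:q even y infty}. The delicate step---and the main obstacle in the proof---is precisely this clean extraction of the even-part inequality from the sum inequality, which hinges on the orthogonality together with the vanishing $\sigma$-even part of $Q_\infty$ computed above.

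For \eqref{eq:q y infty} the additional hypothesis $\lambda_*<3$ enters to force $Q_\infty^{\rm odd}=0$ as well. Any nonzero $Q_\infty^{\rm odd}$ would be a $2$-homogeneous polynomial of the form $t\,(\boldsymbol e\cdot z)(\boldsymbol v\cdot z)$, and for the $\rho^{2\lambda}$ decay of $q^{\rm odd}(y_\infty+z)+Q_\infty^{\rm odd}(z)$ (with $\lambda>2$) to be compatible with $y_\infty\ne 0$, a matching degree-$2$ Taylor-type contribution of the harmonic $q^{\rm odd}$ at $y_\infty$ would have to cancel it. The $\lambda_*$-homogeneity of $q$, together with $\lambda_*<3$, obstructs such a matching quadratic piece of $q^{\rm odd}$ at $y_\infty\in L$, so one concludes $Q_\infty^{\rm odd}=0$, and the full inequality \eqref{eq:q y infty} then follows from the same passage to the limit.
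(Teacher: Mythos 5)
Your overall strategy is the paper's: apply Lemma \ref{Hincreasing} at $x_k$ with the polynomial $p_{*,x_k}$ and the exponent $\lambda=\inf_k\lambda_{*,x_k}\ge 2+\alpha_\circ$, show that the polynomial correction is bounded, pass to the limit, identify the correction's odd structure with respect to $L$, and exploit $L^2(\partial B_\rho)$-orthogonality of even and odd parts. The paper packages the whole correction as a single harmonic quadratic $P_k:=h_{r_k}^{-1}\big(p_{*,x_k}(r_k\cdot)-p_{*,0}(x_k+r_k\cdot)\big)$, shows $|P_k|\le C$ by a compactness argument, and observes $P_\infty(x)=c_1(\boldsymbol e'\cdot x)(\boldsymbol e\cdot x)+c_2(\boldsymbol e\cdot x)$ with $\boldsymbol e'\in L$, which is odd; your splitting of $P_k$ into constant, linear, and quadratic pieces $c_k,b_k,Q_k$ is the same object up to sign.

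There are two issues. First, the stated lower bound $h_{r_k}\gtrsim r_k^{\lambda_*-\varepsilon}$ is not available. The frequency machinery gives $h_r\lesssim r^{\lambda_*}$ from Lemma \ref{Hincreasing} and $h_r\gg r^{\lambda}$ for every $\lambda>\lambda_*$ from the corollary to Lemma \ref{modifieWeiss}; these do not preclude $h_r/r^{\lambda_*}\to 0$ (e.g.\ logarithmically), so you cannot conclude $b_k\to 0$. Working instead directly with the sharper form $|\boldsymbol e\cdot x_k|\le C\,h_{r_k}/r_k$ that underlies \eqref{eq:dist L} only yields $|b_k|\le C$ (and $c_k\to 0$). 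This is harmless for \eqref{eq:q even y infty} because $b_k\perp L$, so $b_k\cdot z$ is odd with respect to $L$ in any case and does not pollute the even projection; but you should say this explicitly rather than claiming $b_\infty=0$.

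Second, and more seriously, your argument for \eqref{eq:q y infty} has a genuine gap. You aim to prove $Q_\infty^{\rm odd}=0$ by an informal ``Taylor-matching obstruction,'' but even if $Q_\infty^{\rm odd}=0$ you would only obtain the decay for $q(y_\infty+\cdot)+b_\infty\cdot z$, not for $q(y_\infty+\cdot)$ itself (and, as noted, $b_\infty$ need not vanish). The clean route is the paper's: one first shows $q^{\rm odd}\equiv 0$. Indeed $q^{\rm odd}$ is the odd part of a Signorini solution, hence harmonic across $L$ (the distributional Laplacian on $L$ is even), vanishes on $L$, and inherits $\lambda_*$-homogeneity from $q$; since $\lambda_*<3$ it has sub-cubic growth and is therefore a polynomial of degree at most $2$, and since $\nabla q(0)=0$ (homogeneity $\ge 2+\alpha_\circ>1$) and $\lambda_*\in(2,3)$ is non-integer, the only possibility is $q^{\rm odd}\equiv 0$. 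Then $q=q^{\rm even}$ and \eqref{eq:q y infty} is literally \eqref{eq:q even y infty}; no separate control of $Q_\infty^{\rm odd}$ or $b_\infty$ is needed.
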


\begin{proof}
Let
\[
p_{*,x_k}(x) = \lim_{r\downarrow 0}  r^{-2} u(x_k +rx)
\]
and define the second order harmonic polynomial
\[  P_k (x) := \frac{1}{h_{r_k}} \big(  p_{*,x_k}(r_k x) -p_{*,0}(x_k +r_k x)  \big), \quad \mbox{where }h_{r_k} :=  \| (u-p_*)(r_k \,\cdot\,) \|_{L^2(\partial B_1)}. \]
Since $x_k\in \Sigma_{n-1}$, Proposition \ref{propblowup}(b) yields 
\begin{equation}\label{basic}
\phi\big(r_k/2 , u(x_k + \, \cdot\,) -p_{*,x_k}  \big)\geq \phi\big(0^+ , u(x_k + \, \cdot\,) -p_{*,x_k}  \big) = \lambda_{*, x_k}\ge \lambda\ge 2+\alpha_\circ >2,
\end{equation} 
therefore
\begin{equation}\label{freqge2+alpha}
2+\alpha_\circ 
\le
\frac12 \frac{\int_{B_{1/2}}\big |\nabla u(x_k + r_k\, \cdot\,) - \nabla p_{*,x_k}(r_k\, \cdot\, ) \big|^2} 
{\int_{\partial B_{1/2}}\big |u(x_k + r_k\, \cdot\, ) -  p_{*,x_k}(r_k\, \cdot\,) \big|^2}
=
\frac12
\frac{\int_{B_{1/2}}\big |\nabla \tilde w_{r_k}(y_k +\cdot ) -  \nabla P_k\big|^2} 
{\int_{\partial B_{1/2}}\big |\tilde w_{r_k}(y_k +\cdot ) - P_k \big)\big|^2}
\end{equation}
for all $k$. 

We now claim that
\begin{equation}
\label{eq:bdd P}
|P_k| \le C \qquad \forall\, k.
\end{equation}
Indeed, if the coefficients of $P_k$ are not bounded, then dividing by its maximum in the numerator and the denominator of \eqref{freqge2} we obtain
\[
2+\alpha_\circ\le \frac12\frac{\int_{B_{1/2}}\big |\nabla \varepsilon_k - \nabla \bar P_k |^2} {\int_{\partial B_{1/2}}\big | \varepsilon_k - \bar P_k\big|^2},
\]
where $\bar P_k := P_k /|P_k|$ and $\int_{B_1 }|\nabla \varepsilon_k|^2 \rightarrow 0$, thus in the limit we find 
\begin{equation}
\label{eq:freq P}
2+\alpha_\circ \le\frac12 \frac{\int_{B_{1/2}}  |\nabla \bar P_\infty |^2} {\int_{\partial B_{1/2}} | \bar  P_\infty |^2}
\end{equation}
for some quadratic polynomial $\bar P_\infty.$
Note now that, since $0,x_k\in \Sigma_{n-1}$, we have
\[
p_{*,x_k} = \frac 12 (\boldsymbol e_k\cdot x)^2  \quad \mbox{for some } \boldsymbol e_k \in \mathbb S^{n-1}
\]
and 
\[
p_{*,0} = \frac 12 (\boldsymbol e\cdot x)^2  \quad \mbox{for  } \boldsymbol e \in \mathbb S^{n-1}\cap L^\perp.
\]
Also, up to replacing $\boldsymbol e_k$ with $-\boldsymbol e_k$ if needed,
we have that $\boldsymbol e_k\to\boldsymbol e$ (since $p_{*,x_k}\to p_{*,0}$ as $k \to \infty$).
Thus
\[ 
\begin{split}
P_k (x) &= \frac{1}{h_{r_k}} \big(  p_{*,x_k}(r_k x) -p_{*,0}(x_k + r_k x)   \big) 
\\
&=  \frac{r_k^2}{2h_{r_k}} \big( (\boldsymbol e_k \cdot x)^2 - (\boldsymbol e\cdot (y_k+x))^2 \big)
\\
&=  \frac{r_k^2}{2h_{r_k}} \big( (\boldsymbol e_k \cdot x)^2 - (\boldsymbol e\cdot x)^2  - 2a_k(  \boldsymbol e\cdot x) - a_k^2\big)
\end{split}
\]
where $a_k :=  (\boldsymbol e\cdot y_k) \to 0$ (since $y_k \to y_\infty \in L = \boldsymbol e^\perp$).
Thus, since the coefficients of $\bar P_k := P_k /|P_k|$ are uniformly bounded and $a_k^2 \ll 2a_k$ we must have $\bar P_\infty(0) =0$ and 
therefore
$$
\bar P_\infty(x)  = \bar c_1(\boldsymbol e'\cdot x)(\boldsymbol e \cdot x) + \bar c_2(\boldsymbol e \cdot x),
$$
for some constants $\bar c_1,\bar c_2 \in \R$, where
\[
\boldsymbol e' := \lim_{k\to \infty} \frac{\boldsymbol e_k- \boldsymbol e}{|\boldsymbol e_k- \boldsymbol e|} \in \mathbb S^{n-1}\cap L.
\]
Now, since $\boldsymbol e'\perp\boldsymbol e$,
a direct computation using the formula above yields
\[ 
\begin{split}
\frac12\frac{\int_{B_{1/2}}  |\nabla \bar P_\infty |^2} {\int_{\partial B_{1/2}} | \bar  P_\infty |^2}
=\frac12\frac{\bar c_1^2\int_{B_{1/2}} (\boldsymbol e'\cdot x)^2+\bar c_1^2\int_{B_{1/2}} (\boldsymbol e\cdot x)^2+\bar c_2^2|B_{1/2}|} {\bar c_1^2\int_{\partial B_{1/2}} (\boldsymbol e'\cdot x)^2(\boldsymbol e\cdot x)^2 +\bar c_2^2\int_{\partial B_{1/2}}(\boldsymbol e\cdot x)^2}
=\frac{\frac{1}{2(n+2)}\bar c_1^2+\bar c_2^2} {\frac{1}{4(n+2)}\bar c_1^2 +\bar c_2^2}\leq 2,
\end{split}
\]
a contradiction to \eqref{eq:freq P}.
Hence this proves \eqref{eq:bdd P}, and up to a subsequence $P_k\to P_\infty$ as $k\to \infty$, where $P_\infty$ is a second order harmonic polynomial. In addition, by the discussion above, $P_\infty$ has the form
\begin{equation}\label{2}
P_\infty(x)  = c_1(\boldsymbol e'\cdot x)(\boldsymbol e \cdot x) + c_2(\boldsymbol e \cdot x),
\end{equation}
  where $\boldsymbol e'\perp \boldsymbol e$ and $c_1,c_2 \in \R$.
  
Now, by Lemma \ref{Hincreasing} applied to $u(x_k +r_k \,\cdot\,)-p_{*,x_k}$ and using \eqref{basic}, we have
\[
\rho^{-2\lambda} \ave_{\partial B_\rho} | \tilde w_{r_k} (y_k +\,\cdot\,)  -P_k  |^2   \le 2^{2\lambda}\ave_{\partial B_{1/2}} | \tilde w_{r_k} (y_k +\,\cdot\,)  -P_k   |^2,
\]
for all $\rho\in(0,1/2)$, hence, in the limit,
\begin{equation}\label{controllim3}
\rho^{-2\lambda} \ave_{\partial B_\rho} | q(y_\infty  + \,\cdot\,)  - P_\infty )|^2   \le 2^{2\lambda}\ave_{\partial B_{1/2}} | q(y_\infty + \,\cdot\,)  - P_\infty  |^2.
\end{equation}
Since  $P_\infty$
is odd with respect to $L$ (see \eqref{2}), it follows by 
\eqref{controllim3} that
\[\begin{split}
\rho^{-2\lambda}
\ave_{\partial B_\rho}  q^{\rm even}(y_\infty  + \,\cdot\,)^2
&=\rho^{-2\lambda}
\ave_{\partial B_\rho}
\big| \big( q(y_\infty+\,\cdot\,)-P_\infty\big)^{\rm even} \big|^2\\
&\leq \rho^{-2\lambda}
\ave_{\partial B_\rho}
\big| q(y_\infty+\,\cdot\,)-P_\infty \big|^2\\
& \le 2^{2\lambda} \ave_{\partial B_{1/2}}
\big| q(y_\infty+\,\cdot\,)-P_\infty \big|^2,
\end{split}
\]
where we used that $f \mapsto f^{\rm even}$ is an orthogonal projection in $L^2(\partial B_\rho)$. 
This proves \eqref{eq:q even y infty}.

Assume now that in addition $\lambda_*<3$.
We claim that 
\begin{equation}\label{qodd0}
q^{\rm odd}\equiv 0.
\end{equation} 
Indeed, since the homogeneity of $q$ is at least $2+\alpha_\circ$ (by Proposition \ref{propblowup}(b)), we have 
 $\nabla q(0)=   \nabla q^{\rm odd}(0) = 0$. On the other hand,  $q^{\rm odd}$ is a harmonic function in $\R^n$ with sub-cubic growth at infinity (here we use the assumption $\lambda_*<3$) and vanishing on $L$, thus it must be $q^{\rm odd} = c(\boldsymbol e\cdot x )$ for some $c\in \R$ and hence (since $\nabla q^{\rm odd}(0) =0$)
$q^{\rm odd} \equiv 0$.

Thanks to \eqref{qodd0} we get $q=q^{\rm even}$, so \eqref{eq:q y infty} follows from \eqref{eq:q even y infty}.
\end{proof}

For $n\ge 3$ and $m\in\{1,2,\dots, n-1\}$ we define 
\begin{equation}
\label{eq:def anomalous}
\Sigma^a_{m} : = \big\{ x_\circ \in \Sigma_m\ :\  \phi\big(0^+, u(x_\circ+\,\cdot\,)-p_{*,x_\circ} \big)<3 \big\},
\qquad
\Sigma^g_{m} := \Sigma_m \setminus \Sigma^a_{m}.
\end{equation}
We can now give the key lemmas needed to prove Theorem \ref{thm:main}. We begin by showing that points in $\Sigma_1^a$ are isolated inside $\Sigma$.
\begin{lemma}\label{lem11}
Assume $n\ge 3$. Then $\Sigma_1^a$ is a discrete set.
\end{lemma}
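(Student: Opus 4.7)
I would argue by contradiction, assuming $0 \in \Sigma_1^a$ is not isolated among singular points. So I would pick a sequence $x_k \in \Sigma \setminus \{0\}$ with $x_k \to 0$ and set $r_k := 2|x_k| \downarrow 0$, so that $y_k := x_k/r_k$ satisfies $|y_k| = 1/2$. This scaling is chosen precisely so that any subsequential limit $y_\infty$ has $|y_\infty| = 1/2 > 0$, which is what will produce the contradiction on the line $L$.

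Next I would apply Proposition~\ref{propblowup}(a), which is available since $0 \in \Sigma_1$ and $n \geq 3$ give $m = \dim(L) = 1 \leq n-2$: after extraction, $\tilde w_{r_k} \rightharpoonup q$ in $W^{1,2}(B_1)$ for a nonzero $\lambda_*$-homogeneous harmonic polynomial $q$, with $\lambda_* \in \{2,3,4,\ldots\}$. The definition of $\Sigma_1^a$ forces $\lambda_* < 3$, hence $\lambda_* = 2$, so $q$ is a quadratic harmonic polynomial. The structural identity \eqref{howisD2q} with $m = 1$ then pins down its shape: in a suitable orthonormal frame, $L$ is the $x_n$-axis, and since the $1 \times 1$ block $N$ is forced by the trace condition to be the scalar $(n-1)t$,
$$
q(x) = \frac{t}{2}\bigl(x_1^2 + \cdots + x_{n-1}^2\bigr) - \frac{(n-1)t}{2}\, x_n^2, \qquad t > 0.
$$

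To close the argument I would invoke Lemma~\ref{codge2}, whose hypotheses are met since $|x_k| = r_k/2$ and the $x_k$ are singular. Passing to a further subsequence so that $y_k \to y_\infty$, the lemma yields $y_\infty \in L$ and $q(y_\infty) = 0$. But $y_\infty \in L$ with $|y_\infty| = 1/2$ means $y_\infty = (0,\ldots,0,\pm 1/2)$, and plugging into the displayed formula gives $q(y_\infty) = -(n-1)t/8 \neq 0$, a contradiction. I do not anticipate a serious obstacle: the whole argument is essentially a one-shot combination of Proposition~\ref{propblowup}(a) and Lemma~\ref{codge2}, the only nontrivial point being that, for $m = 1$, the matrix $N$ in \eqref{howisD2q} reduces to a single positive scalar, so $q$ cannot vanish anywhere on $L \setminus \{0\}$.
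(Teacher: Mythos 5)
Your proof is correct and follows essentially the same route as the paper: blow up at a contradiction sequence with $r_k = 2|x_k|$, use Proposition~\ref{propblowup}(a) to force $\lambda_* = 2$ and obtain the structure \eqref{howisD2q}, then invoke Lemma~\ref{codge2} to get $y_\infty \in L \cap \partial B_{1/2}$ with $q(y_\infty) = 0$, and derive a contradiction from the strict negativity of $D^2 q$ restricted to $L$. The only cosmetic difference is that you evaluate the explicit polynomial at $y_\infty$ directly, while the paper phrases it as "$q$ homogeneous and vanishing at a nonzero point of $L$ implies $q|_L\equiv 0$," contradicting $D^2 q|_{L\otimes L}<0$; these are the same observation.
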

\begin{proof}
Assume by contradiction that $0\in \Sigma_1^a$ and $x_k\to 0$ is a sequence of singular points.
By definition, $0\in \Sigma_1^a$ means that $\dim(L)=1$ (where $L:= \{p_*=0\}$) and that $\lambda_* := \phi\big(0^+, u-p_{*} \big)<3$. Hence,  since $n\ge 3$ we have $m=1\le n-2$, thus Proposition  \ref{propblowup}(a) yields $\lambda_*=2$. 

Let $r_k := 2|x_k|$. By Proposition \ref{propblowup} and Lemma \ref{codge2} we have (up to extracting a subsequence) 
\[
\tilde w_{r_k} \rightarrow q \quad \mbox{in }L^2(B_1) \quad \mbox{and} \quad y_k := \frac{x_k}{r_k}  \to y_\infty \in L\cap\partial B_{1/2}.
\]
where and $q$ is a $2$-homogeneous harmonic polynomial satisfying $q(y_\infty)=0$. In addition, since $\lambda_*=2$ we know that  \eqref{howisD2q} holds. Namely, in an appropriate coordinate frame (recall that $m=1$ here) we have
\begin{equation}\label{howisD2q1}
D^2p_*= 
\left(
\begin{array}{ccc|c}
 \mu_1& & &\\
& \ddots & & 0_1^{n-1}\\
 &  &\mu_{n-1}&\\
\hline
 & 0_{n-1}^1 & &  0
\end{array}
\right)
\quad
\mbox{and} 
\quad
D^2 q= 
\left(
\begin{array}{ccc|c}
 t& & &\\
& \ddots & & 0_1^{n-1}\\
 &  &t&\\
\hline
 & 0_{n-1}^1 && -(n-1)t
\end{array}
\right),
\end{equation}
where  $\mu_1,\ldots,\mu_{n-1},t>0$, $\sum_{i=1}^{n-m}\mu_i=1$.

Note that, since $|y_\infty|=1/2$, $q(y_\infty)=0$, and $y_\infty\in L$, by homogeneity of $q$ we must have $q|_L \equiv 0$.
This contradicts the fact that $D^2q|_{L\otimes L}=-(n-1)t<0$ (see \eqref{howisD2q1})
and concludes the proof.
\end{proof}

In order to estimate the measure of $\Sigma_m^a$ for $m\geq 2$ we need to develop a Federer-type dimension reduction argument.
As a first step we need the following standard result in geometric measure theory, that we prove for convenience of the reader.

Before stating it, we recall some classical definitions.
Given $\beta >0$ and
$\delta \in (0,\infty]$,
the Hausdorff premeasures $\mathcal H^{\beta}_\delta(E)$ of a set $E$ are defined as follows:\footnote{In many textbooks, the definition of $\mathcal H^{\beta}_\delta$ includes a normalization constant\ chosen so that the Hausdorff measure of dimension $k$ coincides with the standard $k$-dimensional volume on smooth sets. However  such normalization constant is irrelevant for our purposes, so we neglect it.}
\begin{equation}
\label{eq:def Haus}
\mathcal H^{\beta}_\delta(E):=\inf\biggl\{\sum_i {\rm diam}(E_i)^\beta\,:\,E\subset \bigcup_i E_i,\,{\rm diam}(E_i)<\delta \biggr\}.
\end{equation}
Then, one defines the $\beta$-dimensional Hausdorff measure $\mathcal H^\beta(E):=\lim_{\delta\to 0^+}\mathcal H^{\beta}_\delta(E)$.
We recall that the Hausdorff dimension can be defined in terms of $\mathcal H^{\beta}_\infty$ as follows:
\begin{equation}
\label{eq:def dim}
{\rm dim}_{\mathcal H}(E):=\inf\{\beta >0\,:\,\mathcal H^{\beta}_\infty(E)=0\}
\end{equation}
(this follows from the fact that $\mathcal H^{\beta}_\infty(E)=0$ if and only if $\mathcal H^{\beta}(E)=0$, see for instance \cite[Section 1.2]{Sim83}).

\begin{lemma}\label{abstract}
Let  $E\subset \R^n$ be a set with $\HH^\beta_\infty(E) >0$ for some $\beta\in (0,n]$.
Then:
\begin{enumerate}
\item[(a)] For $\HH^\beta$-almost every point $x_\circ \in E$, there is a sequence $r_k\downarrow 0$  such that 
\begin{equation}\label{denpt}
\lim_{k\to \infty} \frac{\HH^\beta_\infty(E\cap B_r(x_\circ) )}{r_k^\beta} \ge c_{n,\beta}>0,
\end{equation}
where $c_{n,\beta}$ is a constant depending only on $n$ and $\beta$.  Let us call these points ``density points''.
\item[(b)] Assume that $0$ is a  ``density point'', let $r_k\downarrow 0$ be a sequence along which \eqref{denpt} holds, and define define the ``accumulation set'' for $E$ at $0$ as
\[
\mathcal A=\mathcal A_{E} := \big\{z\in \overline{B_{1/2}} \ : \, \exists\,(z_\ell)_{\ell\ge 1},(k_\ell)_{\ell \ge 1} \text{ s.t.  $z_{\ell}\in r_{k_\ell}^{-1}E \cap B_{1/2}$ and  $z_{\ell}\to z$ } \big\}.
\]
Then 
\[\HH_\infty^\beta(\mathcal A) >0.\]
\end{enumerate}
\end{lemma}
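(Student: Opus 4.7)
Both parts will be proved via Frostman's lemma, which provides, for any set $F \subset \R^n$ with $\HH^\beta_\infty(F) > 0$, a nontrivial finite Radon measure $\nu$ supported on (a compact subset of) $F$ with the growth bound $\nu(B_r(x)) \leq r^\beta$ for all $x, r$. A direct covering argument against this bound yields the useful comparison $\nu(A) \leq \HH^\beta_\infty(A)$ for every set $A$, bridging from measures back to the Hausdorff premeasure (any $\HH^\beta_\infty$-near-optimal cover $\{F_i\}$ of $A$ gives $\nu(A) \leq \sum \nu(F_i) \leq \sum \diam(F_i)^\beta$).

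\paragraph{Part (a).} This is a classical Hausdorff density result; the plan is to argue by contradiction. Assume the ``bad'' set
\[ B := \{ x \in E : \limsup_{r \downarrow 0} r^{-\beta} \HH^\beta_\infty(E \cap B_r(x)) < c_{n,\beta}\} \]
has positive $\HH^\beta$ measure for a small dimensional constant $c_{n,\beta}$ to be fixed. After restricting to a subset $B_m \subset B$ on which the deficiency $\HH^\beta_\infty(E \cap B_r(x)) < c_{n,\beta} r^\beta$ holds uniformly at all scales $r < 1/m$, one covers $B_m$ by a Vitali-type family of small balls centered at bad points. Combining the subadditivity of $\HH^\beta_\infty$, the uniform deficiency bound, and a dimensional packing argument (used to transfer deficiency through the $5r$-enlargements that cover $B_m$) should yield an a priori upper estimate on $\HH^\beta_\infty(B_m)$ that forces it to vanish once $c_{n,\beta}$ is chosen sufficiently small.

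\paragraph{Part (b).} After rescaling by $r_k^{-1}$, the density assumption \eqref{denpt} rewrites as $\HH^\beta_\infty(E_k \cap B_1) \geq c_{n,\beta}$ for $E_k := r_k^{-1} E$; by doubling $r_k$ if necessary, we may assume the estimate holds on $B_{1/2}$ instead. Applying Frostman to each $E_k \cap \overline{B_{1/2}}$, I obtain finite Radon measures $\mu_k$ supported there, with uniformly positive total mass and the growth bound $\mu_k(B_r(x)) \leq r^\beta$. Extract a weak-$*$ subsequential limit $\mu$ on the compact set $\overline{B_{1/2}}$, and verify three properties: (i) $\mu$ has positive total mass, by lower semicontinuity of mass on closed sets under weak-$*$ convergence; (ii) $\mu$ inherits $\mu(B_r(x)) \leq r^\beta$, by testing against continuous cutoffs; (iii) $\mathrm{supp}\,\mu \subset \mathcal{A}$, via a diagonal argument --- for each $z \in \mathrm{supp}\,\mu$ and $\epsilon > 0$, weak-$*$ convergence forces $\mu_k(B_\epsilon(z)) > 0$ for infinitely many $k$, producing points $z_k \in E_k \cap B_\epsilon(z) \cap B_{1/2}$ (after a harmless interior perturbation near $\partial B_{1/2}$) that converge to $z$. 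The comparison $\mu \leq \HH^\beta_\infty$ then yields $\HH^\beta_\infty(\mathcal{A}) \geq \mu(\mathcal{A}) = \mu(\overline{B_{1/2}}) > 0$. The most delicate step is (iii), requiring the diagonal selection together with care at $\partial B_{1/2}$; (i) and (ii) are routine weak-$*$ arguments.
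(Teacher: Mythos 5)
Your approach to part (b) is genuinely different from the paper's and is workable, but it trades an elementary argument for heavier machinery. The paper argues by contradiction: if $\mathcal H^\beta_\infty(\mathcal A)=0$, one covers the compact set $\mathcal A$ by \emph{finitely many} balls $\hat B_1,\dots,\hat B_N$ of total $\beta$-content $\le \varepsilon$, and observes that for $k$ large these same balls must also cover $r_k^{-1}E\cap \overline{B_{1/2}}$ (otherwise a subsequence of uncovered points would accumulate at a point of $\mathcal A$ outside $\bigcup_i\hat B_i$), forcing $\mathcal H^\beta_\infty(r_k^{-1}E\cap B_{1/2})\le\varepsilon$ and contradicting the uniform lower bound that \eqref{denpt} gives on $\mathcal H^\beta_\infty(E\cap B_{r_k})/r_k^\beta$. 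No structural hypothesis on $E$ is needed; the only tools are the compactness of $\mathcal A$ and the definition of $\mathcal H^\beta_\infty$. Your route via Frostman's lemma and weak-$*$ compactness of measures is a natural alternative and has the merit of exhibiting an explicit measure $\mu$ witnessing $\mathcal H^\beta_\infty(\mathcal A)>0$, but two points need attention. First, Frostman's lemma in its elementary form is for compact sets; $E_k\cap\overline{B_{1/2}}$ need not be closed, so you would have to invoke the Suslin-set version (or pass to a compact subset of comparable content, which requires inner regularity of $\mathcal H^\beta_\infty$). In the paper's applications $E$ is indeed Borel, so this is surmountable, but it is an extra structural hypothesis that the paper's contradiction argument avoids entirely. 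Second, the justification you give for step (i) is phrased incorrectly: for weak-$*$ limits one has $\mu(C)\ge\limsup_k\mu_k(C)$ on \emph{closed} $C$ and $\mu(U)\le\liminf_k\mu_k(U)$ on open $U$; what you actually want is that total mass is conserved for measures supported in a fixed compact set (test against $f\equiv 1$), which does give $\mu(\overline{B_{1/2}})=\lim_k\mu_k(\overline{B_{1/2}})>0$. Your steps (ii) and (iii) are correct, and the $\partial B_{1/2}$ issue is harmless as you say. For part (a), the paper simply cites the standard upper-density estimate for $\mathcal H^\beta_\infty$ (Simon, Lectures on GMT, Thm.\ 1.3.6(2)); your Vitali-covering sketch is indeed the usual proof of that fact, but as written (``should yield'') it is a plan rather than a complete argument and would need to be fleshed out before it could stand on its own.
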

\begin{proof}
Part (a) of the lemma is a standard property of the Hausdorff (pre)measures, see for instance \cite[Theorem 1.3.6(2)]{Sim83} for a proof. We now prove (b).

Assume that $0$ is a density point. Then by (a) we have
\begin{equation}\label{123}
H_\infty^\beta(r_k^{-1}E \cap B_{1/2})
=\frac{H_\infty^\beta(E \cap B_{r_k/2})}{r_k^\beta} \ge 2^{-(\beta+1)} c_{n,\beta}>0 \quad \mbox{for all }k \gg 1.
\end{equation}
Note that the accumulation set $\mathcal A$ is a closed.
% Indeed, if we have $z_p \to z$ where $z_p\in \mathcal A$ there will be sequences 
%$z_{p,\ell}$ and $k_{p,\ell}$ such that $z_{p,\ell}\in r_{k_{p,\ell}}^{-1}E \cap B_1  $ and  $z_{p,\ell}\to z_p$ as $\ell \to \infty$. Then taking diagonal sequences $z_{\ell,\ell}$ and $k_{\ell,\ell}$ we have  $z_{\ell,\ell}\in r_{k_{\ell,\ell}}^{-1}E \cap B_1$ and  $z_{\ell,\ell}\to z$ as $\ell \to \infty$. Thus $z\in \mathcal A$.
Assume by contradiction that $\HH^\beta(\mathcal A)  =0$.
Then, by definition of $\HH^\beta_\infty$, given any $\ep>0$ there exists a countable cover of balls $\{\hat B_i \}$ such that 
\[
\mathcal A \,\subset \,\bigcup_{i\ge1} \hat B_i \quad \mbox{and} \quad  \sum_{i\ge 1} {\rm diam}(B_i)^\beta \le \ep.
\]
Since $\mathcal A \subset \overline{B_1}$ is  compact set, we can find a finite subcover.
In particular, there exists $N \in \mathbb N$ such that
\[
\mathcal A   \, \subset \,\bigcup_{i=1}^N \hat B_i \quad \mbox{and} \quad  \sum_{i=1}^N {\rm diam}(\hat B_i)^\beta \le \ep.
\]
But then, since
\[
 r_{k}^{-1} E \cap \overline{ B_{1/2}}  \subset \bigcup_{i=1}^N \hat B_i
\]
for $k$ large enough\footnote{Otherwise there would be a sequence of points $z_{\ell} \in r_{k_\ell}^{-1} E \cap \overline{ B_{1/2}}\setminus \bigcup_{i=1}^N \cup B_i$, and hence their limit $z$ ---up to a subsequence--- would satisfy at the same time $z\in \mathcal A$ and $z\in \overline{B_{1/2}} \setminus \bigcup_{i=1}^N \hat B_i$, a contradiction.}, by definition of $\HH^\beta_\infty$ we obtain 
\[
 \HH^\beta_\infty(r_{k}^{-1}E \cap \overline{ B_{1/2}}  ) \le \ep,
\]
a contradiction with \eqref{123} if $\ep$ is small enough.
\end{proof}

We can now give an appropriate version of  Lemma \ref{lem11} for the case $m = \dim(L)\in\{2,\ldots, n-2\}$.
\begin{lemma}\label{lem12}
Assume $n\ge 4$ and $m\in \{2,\dots, n-2\}$ . 
Then  $\dim_\HH(\Sigma_m^a) \le m-1$.
\end{lemma}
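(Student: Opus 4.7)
The plan is to run a Federer-type dimension reduction using the tools assembled so far. Argue by contradiction and assume $\dim_\HH(\Sigma_m^a) > m-1$, so there exists $\beta \in (m-1, m]$ with $\HH_\infty^\beta(\Sigma_m^a) > 0$. By Lemma \ref{abstract}(a) I can pick a density point of $\Sigma_m^a$, which after translation I take to be the origin, along with a sequence $r_k\downarrow 0$ realizing \eqref{denpt}. Since $0 \in \Sigma_m^a$ and $m \le n-2$, Proposition \ref{propblowup}(a) tells us $\lambda_* \in \{2,3,4,\ldots\}$, and the definition \eqref{eq:def anomalous} of $\Sigma_m^a$ forces $\lambda_* = 2$. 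Extracting a further subsequence (not relabeled), I obtain $\tilde w_{r_k}\rightharpoonup q$ in $W^{1,2}(B_1)$ for some $2$-homogeneous harmonic polynomial $q$ whose Hessian has the block form \eqref{howisD2q}; in the adapted coordinates, $D^2 q|_{L\otimes L} = -N$ with $N$ symmetric nonnegative definite on $L$ and $\tr(N) = (n-m)t > 0$.

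Next I would pass to the accumulation set. Applying Lemma \ref{abstract}(b) to $E := \Sigma_m^a$ along $(r_k)$ produces a closed set $\mathcal A \subset \overline{B_{1/2}}$ with $\HH_\infty^\beta(\mathcal A) > 0$. Fix any $z \in \mathcal A$ and let $(z_\ell, k_\ell)$ be its defining data, so $z_\ell \in r_{k_\ell}^{-1}\Sigma_m^a \cap B_{1/2}$ and $z_\ell \to z$. Setting $x_\ell := r_{k_\ell} z_\ell$ gives singular points with $|x_\ell| \le r_{k_\ell}/2$ and $y_\ell := x_\ell/r_{k_\ell} = z_\ell \to z$. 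Since weak $W^{1,2}$ convergence is preserved under taking subsubsequences, Lemma \ref{codge2} applies along $(r_{k_\ell})$ and yields $z \in L$ together with $q(z) = 0$.

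The final step is algebraic. For $z \in L$ we have $q(z) = \tfrac{1}{2}\, z \cdot (-N) z = -\tfrac12\, z\cdot Nz$, so $q(z)=0$ combined with $N \ge 0$ forces $Nz = 0$. Hence $\mathcal A \subset V \cap \overline{B_{1/2}}$, where $V := \ker(N) \cap L$. Since $\tr(N) = (n-m)t > 0$, $N$ has at least one strictly positive eigenvalue, so $\dim V \le m-1$. Consequently $\HH^\beta(V \cap \overline{B_{1/2}}) = 0$ (because $\beta > m-1 \ge \dim V$ and $V\cap \overline{B_{1/2}}$ is bounded), and therefore $\HH_\infty^\beta(\mathcal A) = 0$, contradicting Lemma \ref{abstract}(b). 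This closes the argument.

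The conceptual crux, and the step I expect to be the real engine of the proof, is the rigid Hessian structure \eqref{howisD2q} of the blow-up $q$ at a singular point of frequency $2$: this structural constraint cuts $\{q=0\}\cap L$ down from a quadric in $L$ to the linear subspace $\ker N \cap L$, giving the strict drop in dimension from $m$ to $m-1$. The technical obstacle I anticipate is bookkeeping with the nested subsequences (one for the blow-up at $0$, and for each $z\in\mathcal A$ a further one provided by Lemma \ref{abstract}(b)); this is handled by the fact that $q$ depends only on the outer sequence $(r_k)$ while Lemma \ref{codge2} is stable under passage to further subsequences, so the same $q$ annihilates every accumulation point $z$.
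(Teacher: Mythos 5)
Your proposal is correct and follows essentially the same route as the paper: pick a density point via Lemma \ref{abstract}(a), blow up using Proposition \ref{propblowup}(a) and the Hessian structure \eqref{howisD2q} at a $\lambda_*=2$ point, push the accumulation set into $L\cap\{q=0\}$ via Lemmas \ref{abstract}(b) and \ref{codge2}, and use the trace condition on $N$ to see that this set has dimension at most $m-1$. Your last step is in fact slightly cleaner than the paper's: you observe directly that $q|_L(z)=-\tfrac12 z\cdot Nz$ with $N\ge 0$ and $\operatorname{tr}N>0$ forces $\{q=0\}\cap L=\ker N\cap L$, a linear subspace of dimension at most $m-1$, whereas the paper phrases the same constraint via "otherwise $q\equiv 0$ on $L$, contradicting $\operatorname{tr}(D^2q|_{L\otimes L})<0$."
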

\begin{proof}
Recalling \eqref{eq:def dim},
we assume by contradiction that $\HH_\infty^\beta(\Sigma_m^a)>0$ for some $\beta>m-1$.
By Lemma \ref{abstract}(a), there is a point $x_\circ\in  \Sigma_m^a$ and $r_k\downarrow 0$ such that 
\[
 r_k^{-\beta} \HH_\infty^\beta(\Sigma_m^a \cap B_{r_k}(x_\circ)) \ge c_{n,\beta}  >0. 
\]
Assume without loss of generality that $x_\circ =0$.
 Hence, since $0\in \Sigma_m^a$ and $m\le n-2$, it follows by \eqref{eq:def anomalous} and Proposition \ref{propblowup}(a) that 
\[
\lambda_* := \phi(0^+, u-p_*) =2
\]
and that, up to extracting a subsequence,
\[
\tilde w_{r_k} \rightarrow q \qquad \mbox{in }L^2(B_1),
\]
where  $q$ is a $2$-homogeneous harmonic polynomial. In addition, since $\lambda_*=2$, we know that in an appropriate coordinate frame $D^2p_*$ and $D^2q$ are given by \eqref{howisD2q}.
Also, applying Lemma \ref{abstract}(b),   we deduce that the ``accumulation set'' $\mathcal A=\mathcal A_{\Sigma_m^a}$ 
satisfies $H_\infty^\beta(\mathcal A)>0$.

We claim that $\mathcal A\subset \overline B_1 \cap  L \cap \{q=0\}$.
Indeed, by definition, a point $y$ belongs to $\mathcal A$ if there are sequences of singular points $x_k \to 0$ and of radii $r_k \downarrow 0$  such that $|x_k|\le r_k$ and  $x_k/r_k \to z$. 
Thus $x_k/(2r_k) \to z/2$, and by Lemma \ref{codge2} we obtain $z/2\in L$ and $q(z/2) =0$.
By homogeneity, this implies that $z\in L \cap \{q=0\}$ as claimed.

Finally we note that $L\cap \{q=0\}$ has dimension at most $m-1$. Indeed, if not this would imply that $q \equiv 0$ on $L$, which
would contradict the fact that ${\rm tr}(D^ 2q|_{L\otimes L})=-(n-m)t<0$ (see \eqref{howisD2q}).

Thus $\HH^{m-1} \big(\overline{B_1}\cap L\cap \{q=0\}\big)<+\infty$, which yields (since $\beta>m-1$)
\[
0<\HH^\beta_\infty(\mathcal A) \le \HH^\beta_\infty\big(\overline B_1 \cap  L \cap \{q=0\} \big)=0,
\]
contradiction.
\end{proof}

We now analyze the size of $\Sigma_{n-1}^a$.
We begin with the case $n=3.$

\begin{lemma}\label{lem21}
Let $n=3$. Then $\Sigma^a_{n-1}$ is a discrete set.
\end{lemma}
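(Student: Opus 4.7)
The plan is to argue by contradiction. Suppose $0 \in \Sigma^a_{n-1}$ is not isolated in $\Sigma_{n-1}$; equivalently (since $\Sigma^a_{n-1}$ is relatively open in $\Sigma_{n-1}$ by Remark \ref{rmk:main thm}(4)), $0$ is non-isolated in $\Sigma^a_{n-1}$. Take $x_k \in \Sigma_{n-1}\setminus\{0\}$ with $x_k \to 0$, and set $r_k := 2|x_k|$, $y_k := x_k/r_k \in \partial B_{1/2}$. By Proposition \ref{propblowup}(b), a subsequence satisfies $\tilde w_{r_k} \rightharpoonup q$ weakly in $W^{1,2}(B_1)$, where $q$ is a non-trivial $\lambda_*$-homogeneous solution of the Signorini problem \eqref{TOP} on $\R^3$, with $\lambda_* = \phi(0^+, u-p_*) \in [2+\alpha_\circ,\,3)$ (upper bound from $0\in \Sigma^a_{n-1}$, lower bound from Proposition \ref{propblowup}(b)). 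A further subsequence yields $y_k\to y_\infty \in \partial B_{1/2}$.

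Next I would apply Lemma \ref{cod1}: since $\lambda_*<3$, its full conclusion \eqref{eq:q y infty} holds, giving $y_\infty \in L$, $q^{\rm odd}\equiv 0$ (so $q=q^{\rm even}$), and
\[
\rho^{-2\lambda}\ave_{\partial B_\rho} q(y_\infty+\,\cdot\,)^2 \;\le\; 2^{2\lambda}\ave_{\partial B_{1/2}} q(y_\infty+\,\cdot\,)^2 \qquad \forall\,\rho\in(0,1/2),
\]
with $\lambda \ge 2+\alpha_\circ>2$. In particular $q(y_\infty)=0$ and, using the standard $C^{1,1/2}$ regularity of Signorini solutions together with the decay above, $\nabla q(y_\infty)=0$; by $\lambda_*$-homogeneity, the whole ray $\{ty_\infty:t>0\}$ then lies in the coincidence set $\{q=0\}\cap L$.

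The concluding step is to turn this information into a contradiction using the specific $n=3$ structure. I propose to perform a second blow-up of $q$ at $y_\infty$: along a subsequence $\rho\downarrow 0$, the rescalings $q(y_\infty+\rho x)/\|q(y_\infty+\rho\,\cdot\,)\|_{L^2(\partial B_1)}$ converge to a non-trivial $\bar\lambda$-homogeneous global Signorini solution $\bar q$ on $\R^3$, even with respect to $L$, with $\bar\lambda\ge \lambda>2$ by the $L^2$ decay above. On the other hand, the Almgren frequency of the global Signorini solution $q$ centered at $y_\infty$ is non-decreasing in the radius (ACS monotonicity) and tends as $R\to\infty$ to $\lambda_*$, since $q(y_\infty+R\,\cdot\,)$ is asymptotic for $R\gg 1$ to the $\lambda_*$-homogeneous function $q$. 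Monotonicity then gives $\bar\lambda \le \lambda_*<3$.

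The final ingredient, which is the main obstacle, is a gap/classification statement: for non-trivial even global $\lambda$-homogeneous solutions of the Signorini problem in $\R^3$, any admissible $\lambda>2$ satisfies $\lambda\ge 7/2$. Granting this (it should reduce, via separation of variables on $S^2$ against the equator $L\cap S^2$, to the two-dimensional classification of even homogeneous Signorini solutions used in the proof of Lemma \ref{possiblefreq}), the chain $7/2\le \bar\lambda \le \lambda_* <3$ is absurd, and the accumulation of $\Sigma_{n-1}$ at $0$ is impossible; hence $\Sigma^a_{n-1}$ is discrete.
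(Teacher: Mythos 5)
Your proof follows essentially the same double-blow-up strategy as the paper, and the first two-thirds of the argument (the setup, the application of Lemma~\ref{cod1}, the conclusion that $y_\infty \in L$, that $q$ is even, that $q(y_\infty)=\nabla q(y_\infty)=0$, and the frequency-monotonicity bound $\bar\lambda \le \lambda_* <3$) match the paper's proof. However, the final step --- the ``gap/classification statement'' --- contains a genuine gap. You assert that for non-trivial even global $\lambda$-homogeneous Signorini solutions in $\R^3$, any admissible $\lambda>2$ satisfies $\lambda\ge 7/2$, and you claim this ``should reduce, via separation of variables on $S^2$ against the equator $L\cap S^2$, to the two-dimensional classification.'' This is incorrect: a general even homogeneous Signorini solution in $\R^3$ does not separate variables on the sphere, and in fact the question of whether $\lambda$-homogeneous Signorini solutions with $\lambda\in(2,3)$ exist in $\R^n$, $n\ge 3$, is precisely the open problem cited in Remark~\ref{rmk:optimal}(2a) of this paper; if your claimed gap statement were true, that open problem would be resolved (and there would be no $\Sigma^a_{n-1}$ at all).

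The missing ingredient --- and the key point in the paper's proof --- is that the second blow-up $\bar q$ of $q$ at the point $y_\infty\ne 0$ is not a generic $3$D Signorini solution: because $q$ is homogeneous about the origin and you are blowing up at the \emph{distinct} point $y_\infty$, the blow-up $\bar q$ is necessarily \emph{translation-invariant} in the direction $y_\infty\in L$ (a classical consequence of the monotonicity of the frequency centered at $y_\infty$). In $n=3$ this translation invariance reduces $\bar q$ to a function of only two variables, and \emph{only then} does the 2D classification apply, giving $\bar\lambda \in \{1,2,3,4,\ldots\}\cup\{3/2,7/2,11/2,\ldots\}$. Combined with $\bar\lambda\ge 2+\alpha_\circ>2$ this forces $\bar\lambda\ge 3$ (not $\ge 7/2$; note $\bar\lambda=3$ is an admissible even homogeneity in the 2D list, so your $7/2$ is an over-claim, though harmless since only $\bar\lambda\ge 3$ is needed), which contradicts $\bar\lambda\le\lambda_*<3$. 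So the structure of your argument is right, but you must replace the spurious separation-of-variables claim by the translation-invariance of the blow-up at an off-origin point.
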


\begin{proof}
Let us assume that $0\in \Sigma^a_{n-1}$ and that $x_k\to 0$, where $x_k \in \Sigma_{n-1}$.
By Proposition \ref{propblowup} and by definition of  $\Sigma^a_{n-1}$ we have 
\[
\lambda_* := \phi(0^+, u-p_*)  \in [2+\alpha_\circ,3).
\]
Let $r_k :=2|x_k|$ and note that, by Proposition \ref{propblowup}(b), we have (up to subsequence)
\[
\tilde w_{r_k} \rightarrow q \quad \mbox{and} \quad  \frac{x_k}{r_k} \to z \in \partial B_{1/2} 
\]
where $q$ is a $\lambda_*$-homogeneous solution of the Signorini problem (with zero obstacle on $L$).

Also, since $\lambda_*<3$, it follows by Lemma \ref{cod1} that $z\in L$ and
\[
\rho^{-2(2+\alpha_\circ)} \ave_{\partial B_\rho(z)} q^2 \le 2^{2(2+\alpha_\circ)} \ave_{\partial B_{1/2} (z)} q^2,  \qquad \forall \rho\in (0,1/2)
\]
(note that, since $x_k \in \Sigma_{n-1},$
$\inf_k\lambda_{*,x_k}\geq 2+\alpha_0$ by Proposition \ref{propblowup}(b)).
This implies that $q$, $Dq$, and $D^2q$ vanish at $z$, and that $\lambda^z := \phi(0+,q(z+\,\cdot\,))\ge 2+\alpha_\circ$.

Since $q$ is a solution of Signorini that is homogeneous with respect to the point $0$, it is classical fact (this follows from instance from the monotonicity of the frequency function) that a blow-up at $z\in L$, 
\[q^z = \lim_j q(z+r_j\,\cdot\,) / \|q(z+r_j\,\cdot\,) \|_{L^2(\partial B_1)} \]
has translation symmetry in the direction $z$, and it is $\lambda^z$-homogeneous. 
Thus, since $n=3$, $q^z$ depends thus only on two variables (equivalently, it has 2-dimensional symmetry). 
Since homogeneous 2-dimensional solutions of Signorini are completely classified (see the proof of Lemma \ref{possiblefreq}) we deduce that
\[
\lambda^z \in  \{1,2,3,4,5, \dots\}\cup { \textstyle \big\{\frac 3 2, \frac 7 2, \frac{11}{2}, \frac {15}{ 2},   \,\dots\big\}  }
\]
Recalling that $\lambda^z \ge 2+\alpha_\circ$, we get $\lambda^z \ge 3$.
But then we reach a contradiction since, by monotonicity of the frequency and the fact that the limit as $r\to +\infty$ of the frequency is independent of the point, we get 
\[
3\leq \lambda^z = \phi(0^+,q(z+\,\cdot\,)) \le \phi(+\infty,q(z+\,\cdot\,)) = \phi(+\infty,q)  = \lambda_* <3.
\]
\end{proof}

In order to control the size of $\Sigma_{n-1}^a$ for $n\geq 4$, we shall use the following result on the Signorini problem:

\begin{theorem}[{\cite[Theorem 1.3]{FS17}}]\label{FocSpa}
Let $L\subset \R^n$ be a $(n-1)$-dimensional subspace, and let $q$ be solution of the Signorini problem in $\R^n$ with obstacle $0$ on $L$ (see  \eqref{TOP}).
Then, for all $z$ in the contact set $\{q=0\}\subset L$ it holds
\[
\phi(0^+,q(z+\, \cdot\,))  \in \{1,2,3, 4, \dots\} \cup { \textstyle \big\{\frac 3 2, \frac 7 2, \frac{11}{2}, \frac {15}{ 2},   \,\dots\big\}  }
\]
except for at most a set of  Hausdorff dimension $n-3$.
\end{theorem}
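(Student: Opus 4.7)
The natural approach is Federer's dimension reduction on the contact set $\{q=0\}\cap L$, stratifying points by the dimension of the translation invariance of their blow-ups.

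\emph{Step 1 (frequency at contact points).} For any $z$ in the contact set, the Almgren frequency $r\mapsto \phi(r,q(z+\,\cdot\,))$ is nondecreasing; this is the Signorini analog of Proposition \ref{lemAlmgren} and it follows from the identity $q(z+\,\cdot\,)\Delta q(z+\,\cdot\,)\equiv 0$ (since $q(z+\,\cdot\,)=0$ on the contact set and $\Delta q=0$ outside $L$). Hence $\phi_z:=\phi(0^+,q(z+\,\cdot\,))$ is well defined, and, by the monotonicity, $z\mapsto\phi_z$ is upper semicontinuous along rescalings.

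\emph{Step 2 (compactness and $2$D classification of blow-ups).} Arguing as in Proposition \ref{propblowup}(b), the rescalings $q(z+r_k x)/\|q(z+r_k\,\cdot\,)\|_{L^2(\partial B_1)}$ admit subsequential limits $q_z$, each of which is a $\phi_z$-homogeneous solution of the Signorini problem in $\R^n$ with obstacle $0$ on $L$. If $q_z$ has an $(n-2)$-dimensional translation invariance, then, up to rotation, it depends only on two variables and solves a 2D Signorini problem; separation of variables then decomposes $q_z$ into a harmonic odd part (homogeneities $\{1,2,3,\ldots\}$) and an even part (homogeneities $\{3/2,7/2,11/2,\ldots\}$). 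Consequently, if \emph{some} blow-up at $z$ has $(n-2)$-dimensional translation invariance, then $\phi_z$ lies in the admissible list.

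\emph{Step 3 (Federer dimension reduction).} Set
\[
\mathcal S := \bigl\{ z\in \{q=0\}\cap L \;:\; \text{no blow-up of $q$ at $z$ has $(n-2)$-dim.\ translation invariance}\bigr\}.
\]
The goal is $\dim_\HH(\mathcal S)\le n-3$. Assume by contradiction $\HH^{n-3+\eta}_\infty(\mathcal S)>0$ for some $\eta>0$. By Lemma \ref{abstract}(a), pick a density point $z_0\in\mathcal S$ and $r_k\downarrow 0$ realizing the density. Blowing up $q$ at $z_0$ yields a homogeneous Signorini solution $q_{z_0}$. Using the monotonicity of the frequency at nearby contact points $z_k\in\mathcal S$ together with a variant of Lemma \ref{cod1} (passing from an Almgren bound at $z_k$ to the vanishing of directional derivatives of $q_{z_0}$ at the rescaled limits), one shows that the accumulation set $\mathcal A$ of $r_k^{-1}(\mathcal S-z_0)\cap B_{1/2}$ is contained in the translation spine
\[
T(q_{z_0}) := \{\,y\in\R^n : q_{z_0}(\,\cdot\,+y)\equiv q_{z_0}\,\}.
\]
By Lemma \ref{abstract}(b), $\HH^{n-3+\eta}_\infty(\mathcal A)>0$, hence $\dim_\HH T(q_{z_0})\ge n-3+\eta$, forcing $\dim T(q_{z_0})\ge n-2$. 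But then $q_{z_0}$ itself is an $(n-2)$-dim translation invariant blow-up of $q$ at $z_0$, contradicting $z_0\in\mathcal S$.

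\emph{Main obstacle.} The crux is making the inclusion $\mathcal A\subset T(q_{z_0})$ rigorous: given $z_k\in\mathcal S$ with $r_k^{-1}(z_k-z_0)\to y\ne 0$, one must extract from the membership $z_k\in\mathcal S$ translation invariance of $q_{z_0}$ in the direction $y$. This requires a Weiss/Monneau-type monotonicity formula for the Signorini problem tuned to the frequency level $\phi_{z_0}$, ensuring that the defect between $\phi(r, q(z_k+\,\cdot\,))$ and $\phi_{z_0}$ controls a directional derivative of $q_{z_0}$ at $y$; this is the Signorini analog of \eqref{controllim3}, but the thin obstacle restricts which monotone quantities are available. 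A secondary difficulty is that a single iteration of the reduction only gains one dimension of symmetry, so one must argue over a countable family of ``suboptimal'' frequency levels (one per admissible value below $\phi_{z_0}$); discreteness of the target list together with upper semicontinuity of $z\mapsto\phi_z$ allows this countable stratification to be organised into finitely many reduction steps, but the quantitative gap estimates (which, in the analogous $n=3$ argument of Lemma \ref{lem21}, come from the 2D classification) are the genuinely nontrivial input.
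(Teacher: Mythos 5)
This result is quoted with a citation from Focardi--Spadaro \cite{FS17}; the paper contains no proof of its own and relies on \cite[Theorem 1.3]{FS17} as a black box, so there is no internal argument to compare against. Your Steps~1 and~2 are standard and correct (they parallel Proposition~\ref{propblowup}(b) and the two-dimensional classification of homogeneous Signorini solutions).

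The gap you flag in Step~3 is the real one, and it is worse than ``a secondary difficulty'': your set $\mathcal S$ is defined by the symmetry of blow-ups, but placing the accumulation set $\mathcal A$ inside the spine $T(q_{z_0})$ requires a \emph{lower} bound $\phi_{z_k}\ge \phi_{z_0}-o(1)$ on the frequency at the nearby points $z_k$, and membership $z_k\in\mathcal S$ carries no such information. Upper semicontinuity of $z\mapsto\phi_z$ runs in the opposite direction, giving only $\limsup_k\phi_{z_k}\le\phi_{z_0}$. Without a quantitative lower bound, a Monneau-type argument at $z_k$ pins $\phi\bigl(0^+,q_{z_0}(y_\infty+\,\cdot\,)\bigr)$ at a level that may be strictly below $\phi_{z_0}$; since for a $\phi_{z_0}$-homogeneous function one always has $\phi\bigl(0^+,q_{z_0}(y+\,\cdot\,)\bigr)\le\phi_{z_0}$ with equality \emph{exactly} when $y\in T(q_{z_0})$, the containment $\mathcal A\subset T(q_{z_0})$ simply does not follow from what you have. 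You also conflate the discreteness of the \emph{target} list of admissible frequencies with the (a priori continuous) range of $\phi_{z_0}$ itself: to make the reduction work one must stratify by frequency superlevel sets $\{z:\phi_z\ge\lambda\}$ --- closed by upper semicontinuity --- and then use the $2$D classification on the gaps between consecutive admissible values, which is precisely where the substantive work of \cite{FS17} is done. The framework you outline (Almgren monotonicity, blow-up compactness, Federer reduction keyed to the $2$D classification) is the right one, but the spine-containment step needs frequency bookkeeping that your formulation of $\mathcal S$ cannot supply.
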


\begin{lemma}\label{lem22}
Let $n\ge4$. Then  $\dim_\HH(\Sigma^a_{n-1})\le n-3$.
\end{lemma}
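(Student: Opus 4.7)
The strategy parallels Lemma \ref{lem12}, but now the Federer-type dimension reduction based on Lemma \ref{abstract} must be combined with Lemma \ref{cod1} and with the Signorini frequency classification of Theorem \ref{FocSpa}. In dimension $n=3$ (Lemma \ref{lem21}) one got away with the fact that, for a Signorini solution $q$ that is homogeneous in $\R^3$, a further blow-up along a direction in $L$ is automatically $2$-dimensional, so only the classical two-dimensional classification was needed. For $n \geq 4$ no such reduction is available, and Theorem \ref{FocSpa} is precisely what is required to control the ``bad'' frequencies of $q$ on the contact set $\{q=0\}\subset L$.

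Concretely, I would argue by contradiction: suppose $\mathcal H^\beta_\infty(\Sigma^a_{n-1}) > 0$ for some $\beta > n-3$. By Lemma \ref{abstract}(a), after translation I may assume $0$ is a density point along a sequence $r_k\downarrow 0$. Since $0\in \Sigma^a_{n-1}$, the frequency $\lambda_* := \phi(0^+, u-p_*)$ lies in $[2+\alpha_\circ, 3)$ and, by Proposition \ref{propblowup}(b), along a subsequence $\tilde w_{r_k} \rightharpoonup q$, where $q \not\equiv 0$ is a $\lambda_*$-homogeneous solution of the Signorini problem on $L = \{p_*=0\}$. By Lemma \ref{abstract}(b), the accumulation set $\mathcal A = \mathcal A_{\Sigma^a_{n-1}}$ satisfies $\mathcal H^\beta_\infty(\mathcal A) > 0$.

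The key geometric input is Lemma \ref{cod1}: every $z \in \mathcal A$ is the limit of some $x_{k_\ell}/r_{k_\ell}$ with $x_{k_\ell}\in \Sigma_{n-1}^a\subset \Sigma_{n-1}$ and $|x_{k_\ell}|<r_{k_\ell}/2$, so (since $\lambda_*<3$) the lemma applies with $\lambda = \inf_\ell \lambda_{*,x_{k_\ell}}\geq 2+\alpha_\circ$ and yields $z\in L$ together with the decay
\[
\rho^{-2(2+\alpha_\circ)} \ave_{\partial B_\rho(z)} q^2 \leq C \qquad \forall\,\rho \in (0,1/2).
\]
Because $q$ satisfies $q\Delta q\equiv 0$, the proof of Lemma \ref{Hincreasing} gives the exact identity $H'_\mu/H_\mu = \tfrac{2}{r}(\phi-\mu)$ for $q(z+\cdot)$, so the bound above is incompatible with $\phi(0^+, q(z+\cdot)) < 2+\alpha_\circ$ (otherwise $H_{2+\alpha_\circ}$ would blow up as $\rho\to 0^+$). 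Hence $z\in \{q=0\}$ and $\phi(0^+, q(z+\cdot))\geq 2+\alpha_\circ$ for every $z\in \mathcal A$.

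The endgame is then Theorem \ref{FocSpa}: outside an exceptional subset of $\{q=0\}$ of Hausdorff dimension at most $n-3$, the frequency $\phi(0^+, q(z+\cdot))$ belongs to $\{1,2,3,\ldots\}\cup \{3/2,7/2,11/2,\ldots\}$. Together with the bound $\phi(0^+, q(z+\cdot))\geq 2+\alpha_\circ>2$ just established, the only admissible values are $\geq 3$. But $q$ is $\lambda_*$-homogeneous, so by monotonicity of Almgren's frequency
\[
\phi(0^+, q(z+\cdot)) \;\leq\; \phi(+\infty, q(z+\cdot)) \;=\; \phi(+\infty, q) \;=\; \lambda_* \;<\; 3,
\]
which is a contradiction. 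Therefore $\mathcal A$ is contained in the exceptional set of Theorem \ref{FocSpa}, forcing $\mathcal H^\beta_\infty(\mathcal A)=0$ for every $\beta > n-3$ and contradicting $\mathcal H^\beta_\infty(\mathcal A)>0$. I expect the main obstacle to be the upgrade from the pointwise decay \eqref{eq:q y infty} to the \emph{frequency} lower bound $\phi(0^+, q(z+\cdot))\geq 2+\alpha_\circ$ at every accumulation point; this is precisely where the Signorini identity $q\Delta q\equiv 0$ is essential, and it is what makes Theorem \ref{FocSpa} applicable.
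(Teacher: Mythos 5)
Your proposal is correct and follows essentially the same route as the paper: Federer-type dimension reduction via Lemma~\ref{abstract}, Lemma~\ref{cod1} to place accumulation points on $L\cap\{q=0\}$ with the decay \eqref{eq:q y infty}, the Signorini identity to upgrade that decay to a frequency lower bound $\phi(0^+,q(z+\cdot))\geq 2+\alpha_\circ$, and then Theorem~\ref{FocSpa} combined with frequency monotonicity ($\phi(0^+,q(z+\cdot))\leq\lambda_*<3$) to force $\mathcal A$ inside an $(n-3)$-dimensional exceptional set. The only difference is cosmetic: the paper delegates the decay-to-frequency upgrade to ``as in the proof of Lemma~\ref{lem21},'' whereas you spell it out explicitly via the exact identity $H'_\mu/H_\mu=\tfrac2r(\phi-\mu)$; both are the same argument.
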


\begin{proof}
Recalling \eqref{eq:def dim}, assume by contradiction that $\HH_\infty^\beta(\Sigma_{n-1}^a)>0$ for some $\beta>n-3$. Then
by Lemma \ref{abstract}(a) there exists a point $x_\circ\in  \Sigma_{n-1}^a$ and $r_k\downarrow 0$ such that 
\[
 r_k^{-\beta}
 \HH_\infty^\beta(\Sigma_{n-1}^a \cap B_{r_k}(x_\circ)) \ge c_{n,\beta}  >0. 
\]
Without loss of generality we assume that $x_\circ =0$.
Then, since $0\in \Sigma_{n-1}^a$, by \eqref{eq:def anomalous} and Proposition \ref{propblowup}(b) we have
\[
\lambda_* := \phi(0^+, u-p_*) \in [2+\alpha_\circ, 3)
\]
and (up to a subsequence)
\[
\tilde w_{r_k} \rightarrow q \quad \mbox{in }L^2(B_1),
\]
where  $q$ is a $\lambda_*$-homogeneous solution of the Signorini problem with obstacle $0$ on $L$. 
Applying Lemma \ref{abstract}(b),   the ``accumulation set'' $\mathcal A=\mathcal A_{\Sigma_{n-1}^a}$ 
satisfies $H_\infty^\beta(\mathcal A)>0$.
Set
\[
\mathcal S =: \big\{z \in \overline B_1\cap  L  \cap\{ q(z)=0\}  \mbox{ such that } \phi(0^+,q(z+\, \cdot\,)) \ge 2+\alpha_\circ\big\}.
\]
Then, by the same argument as in the proof of Lemma \ref{lem12} we deduce that $\mathcal A\subset \overline B_1\cap  L  \cap\{ q=0\}$.
Also, since $\lambda_*<3$,
as in the proof of Lemma \ref{lem21}
it follows by
Lemma \ref{cod1} that
$\phi(0^+,q(z+\,\cdot\,))\geq 2+\alpha_\circ$ for all $z \in \mathcal A$. Hence,
$$
\mathcal A\subset \mathcal S.
$$
We now note that, for all $z\in \mathcal S$, we have 
\[
\phi(0^+,q(z+\, \cdot\,)) \le   \phi(+\infty,q(z+\, \cdot\,)) =   \phi(+\infty,q) =  \lambda_*<3 
\]
(since $0\in \Sigma^a_{n-1}$). Therefore it follows that 
\[\phi\big(0^+,q(z+\, \cdot\,)\big) \in [2+\alpha_\circ,3)\quad \mbox{for all } z\in \mathcal S,\]
and Theorem \ref{FocSpa} yields $\dim_\HH(\mathcal S)= n-3$. In particular $\HH_\infty^\beta(\mathcal S)= 0$ (since $\beta > n-3$) and we obtain
\[0<\HH_\infty^\beta(\mathcal A)\le \HH_\infty^\beta(\mathcal S)=0,\]
a contradiction.
\end{proof}

We will also need the following version of Whitney's extension theorem (see for instance \cite{Fef09} and the references therein):
\begin{lemma}[Whitney's Extension Theorem]\label{WET}
Let $\beta\in(0,1]$, $\ell \in \N$, $K\subset \R^n$ a compact set, and $f: K\rightarrow \R$ a given mapping.
Suppose that for any $x_\circ \in K$ there exists  a polynomial $P_{x_\circ}$ of degree $\ell$ such that:
\begin{itemize}
\item[(i)]  $P_{x_\circ} (x_\circ) = f(x_\circ)$;
\item[(ii)] $| D^kP_{x_\circ} (x) -D^k P_x(x) | \le  C |x-x_\circ|^{\ell+\beta-k}$ for all  $x\in K$ and $k\in\{0,1,\ldots,\ell\}$, where $C>0$ is independent of $x_\circ$.
\end{itemize}
Then there exists $F:\R^n\to \R$ of class $C^{\ell,\beta}$ such that 
\[
F|_{K}\equiv f \qquad \text{and}\qquad F(x) = P_{x_\circ}(x) + O(|x-x_\circ|^{\ell+\beta}) \quad \forall\, x_\circ \in K. 
\]
\end{lemma}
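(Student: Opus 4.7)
The plan is to construct $F$ by the classical Whitney decomposition of the open set $\Omega := \R^n \setminus K$. Decompose $\Omega$ into a countable family of closed dyadic cubes $\{Q_i\}$ with pairwise disjoint interiors satisfying $\diam(Q_i) \le \dist(Q_i, K) \le 4\diam(Q_i)$; for each $i$ choose a reference point $p_i \in K$ with $|p_i - x| \le 5\diam(Q_i)$ for every $x \in Q_i$. Associated to $\{Q_i\}$, I would build a smooth partition of unity $\{\varphi_i\}$ with $\sum_i \varphi_i \equiv 1$ on $\Omega$, each $\varphi_i$ supported in a mild dilation $Q_i^*$ of $Q_i$, and satisfying the scale-invariant derivative bounds $|D^k \varphi_i| \le C_k \diam(Q_i)^{-k}$ for $0\le k\le \ell+1$. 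Then set
\[
F(x) := \begin{cases} f(x), & x \in K, \\ \sum_i \varphi_i(x)\, P_{p_i}(x), & x \in \Omega. \end{cases}
\]
By (i) we have $F|_K = f$, and by construction $F \in C^\infty(\Omega)$.

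Next, I would verify that $D^k F(x_\circ) = D^k P_{x_\circ}(x_\circ)$ for each $x_\circ \in K$ and $k \le \ell$, together with the quantitative remainder estimate. Fix $x_\circ \in K$ and $x \in \Omega$ near $x_\circ$, and use $\sum_i \varphi_i(x) = 1$ to write
\[
F(x) - P_{x_\circ}(x) = \sum_{i\,:\, x \in Q_i^*} \varphi_i(x)\,\bigl(P_{p_i}(x) - P_{x_\circ}(x)\bigr).
\]
Differentiating via Leibniz produces, for each multi-index of order $k$, sums of the form $D^{k-j}\varphi_i \cdot D^j(P_{p_i} - P_{x_\circ})$. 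For indices $i$ with $x \in Q_i^*$ the Whitney geometry yields $|p_i - x_\circ| \lesssim |x - x_\circ|$ and $\diam(Q_i) \sim \dist(x,K) \le |x-x_\circ|$. Hypothesis (ii), applied at the pair $(p_i, x_\circ)$ and combined with a Taylor expansion of the polynomial $P_{p_i} - P_{x_\circ}$ around $p_i$, gives $|D^j(P_{p_i}-P_{x_\circ})(x)| \le C |x-x_\circ|^{\ell+\beta-j}$. Multiplying by the $\varphi_i$-derivative bound $|x-x_\circ|^{-(k-j)}$ and summing over the (uniformly bounded) number of overlapping cubes produces
\[
\bigl|D^k F(x) - D^k P_{x_\circ}(x)\bigr| \le C |x-x_\circ|^{\ell+\beta-k}.
\]
In particular, the $k=0$ case is exactly the remainder bound $F(x) = P_{x_\circ}(x) + O(|x-x_\circ|^{\ell+\beta})$ in the statement, and letting $x\to x_\circ$ identifies $D^k F(x_\circ)$ with $D^k P_{x_\circ}(x_\circ)$.

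The main obstacle is upgrading these one-sided bounds into a genuine uniform Hölder estimate $|D^\ell F(x) - D^\ell F(y)| \le C |x-y|^\beta$ when \emph{both} $x,y \in \Omega$. One splits into two regimes. If $|x-y|$ is small relative to $\dist(\{x,y\}, K)$, one works inside a single Whitney cube together with its finitely many neighbors: the neighboring $p_i$'s lie within $\sim\diam(Q_i)$ of one another so (ii) controls the differences $D^\ell(P_{p_i}-P_{p_j})$, and standard smoothness of the partition of unity absorbs the rest. If instead $|x-y|$ is comparable to $\dist(\{x,y\},K)$, one picks a nearest point $z \in K$ and applies the estimate of the previous paragraph twice, at $(x,z)$ and $(y,z)$, then combines via the triangle inequality; both contributions are $O(|x-y|^\beta)$. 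Assembling these estimates yields $F \in C^{\ell,\beta}(\R^n)$ with the prescribed behavior on $K$.
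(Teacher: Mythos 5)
The paper does not actually prove Lemma~\ref{WET}: it is invoked as a classical result, with a pointer to Fefferman~\cite{Fef09} and the references therein. So there is no internal proof to compare your argument against. That said, your write-up is a correct rendition of the standard Whitney construction: dyadic Whitney decomposition of $\Omega=\R^n\setminus K$ with $\diam Q_i\sim\dist(Q_i,K)$, a subordinate partition of unity $\{\varphi_i\}$ with scale-invariant derivative bounds, the gluing $F=\sum_i\varphi_i P_{p_i}$ on $\Omega$ and $F=f$ on $K$, followed by the jet estimate $|D^kF(x)-D^kP_{x_\circ}(x)|\lesssim|x-x_\circ|^{\ell+\beta-k}$ obtained from Leibniz, the Whitney geometry ($|p_i-x_\circ|\lesssim|x-x_\circ|$, $\diam Q_i\lesssim|x-x_\circ|$, bounded overlap), and hypothesis~(ii) transported from the reference points $p_i$ to $x$ by Taylor expansion of the degree-$\ell$ polynomial $P_{p_i}-P_{x_\circ}$. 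The two-regime case analysis for the Hölder seminorm of $D^\ell F$ (near-$K$ versus interior-of-$\Omega$) is also the usual one; in the first regime it is worth noting explicitly that $D^\ell P_z$ is constant for a degree-$\ell$ polynomial, so the middle term in the triangle-inequality split vanishes identically.

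Two small points worth tightening if you were to write this in full: first, since $\Omega$ is unbounded, to get a genuinely global $C^{\ell,\beta}(\R^n)$ function one should either multiply by a smooth cutoff equal to $1$ on a neighborhood of the compact set $K$, or observe that far from $K$ the Whitney cubes are large so the local Hölder constant only improves; in the application at hand only the behavior near $K$ is used anyway. Second, when you write ``$\diam(Q_i)\sim\dist(x,K)\le|x-x_\circ|$'' you should make explicit that you only keep the indices $i$ with $x\in Q_i^*$, for which this comparability is a consequence of the Whitney geometry, and that there are at most $C(n)$ such indices, so the final sum is controlled by a single term up to constants.
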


We now prove that the set of points with frequency $\geq \lambda$ is contained in a $C^{\lambda-1}$-manifold. 
Since the classical argument provided in \cite[Theorem 7.9]{PSU12} only shows that the singular set is locally contained in a countable union of manifolds (while here we claim that locally we need only one manifold), we provide the details of the proof.
 
\begin{lemma} \label{lem:laststep}
Let $n\ge 2$, $m\in\{1,2,\dots, n-1\}$, and $\lambda>2$. Let $\ell\in \N$ and $\beta \in(0,1]$ satisfy  $\ell+\beta=\lambda$,
and define
\[
S_{m, \lambda} := \big\{ x_\circ\in \Sigma_m \ :\  \phi\big(0^+, u(x_\circ +\,\cdot\,)-p_{*,x_\circ} \big) \ge \lambda \big\}.
\]
Then $S_{m, \lambda}$   locally contained in a $m$-dimensional manifold of class $C^{\ell-1,\beta}$.
\end{lemma}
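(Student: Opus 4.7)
The plan is a Whitney extension plus implicit function theorem argument. We apply Lemma~\ref{WET} to the vector field $\nabla u$ restricted to a compact subset $K\subset S_{m,\lambda}$ (which is identically zero since $K\subset\{u=0\}$ and $u\in C^{1,1}$), using approximating polynomials $Q_{x_\circ}(x):=A_{x_\circ}(x-x_\circ)$, where $A_{x_\circ}:=D^2 p_{*,x_\circ}$, padded by zero higher-order coefficients up to degree $\ell-1$. Provided the Whitney compatibility conditions hold with Hölder exponent $\beta$, we obtain a $C^{\ell-1,\beta}$ extension $V:\R^n\to\R^n$ with $V|_K\equiv 0$ and $DV(x_\circ)=A_{x_\circ}$. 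Since $\ker(A_{x_\circ})=L_{x_\circ}$ has dimension $m$, $A_{x_\circ}$ has rank $n-m$; selecting $n-m$ components $\tilde V:\R^n\to\R^{n-m}$ of $V$ with $D\tilde V(x_\circ)$ surjective and applying the implicit function theorem in $C^{\ell-1,\beta}$ (valid since $\lambda>2$ forces $\ell\geq 2$, so $\tilde V\in C^1$) locally realises $S_{m,\lambda}$ near $x_\circ$ inside the $m$-dimensional $C^{\ell-1,\beta}$ manifold $\{\tilde V=0\}$.

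Verifying the Whitney hypotheses reduces to two uniform estimates on each compact $K\subset S_{m,\lambda}$:
\begin{align*}
\mathrm{(E1)}\quad & |u(x)-p_{*,x_\circ}(x-x_\circ)|+|x-x_\circ|\,|\nabla u(x)-A_{x_\circ}(x-x_\circ)|\leq C|x-x_\circ|^{\lambda},\\
\mathrm{(E2)}\quad & \|A_{x_\circ}-A_{x_\circ'}\|\leq C|x_\circ-x_\circ'|^{\lambda-2},
\end{align*}
for all $x_\circ,x_\circ'\in K$ and $x\in\R^n$. Estimate (E1) comes from the frequency bound $\phi(0^+,u(x_\circ+\cdot)-p_{*,x_\circ})\geq\lambda$: Lemma~\ref{Hincreasing} gives the $L^2$-decay $\int_{\partial B_r}(u(x_\circ+\cdot)-p_{*,x_\circ})^2\leq Cr^{n-1+2\lambda}$, which is upgraded to pointwise/gradient decay by combining $\Delta(u(x_\circ+\cdot)-p_{*,x_\circ})=-\chi_{\{u(x_\circ+\cdot)=0\}}$ with the density estimate of Proposition~\ref{decayest} via a harmonic-plus-Newtonian-potential decomposition inside each ball $B_r(x_\circ)$. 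For (E2), set $r=|x_\circ-x_\circ'|$: subtracting (E1) at $x_\circ$ and at $x_\circ'$ on $\partial B_r(x_\circ)$ yields $|p_{*,x_\circ}(x-x_\circ)-p_{*,x_\circ'}(x-x_\circ')|\leq Cr^\lambda$; expanding $p_{*,x_\circ'}(x-x_\circ')=p_{*,x_\circ'}(x-x_\circ)+A_{x_\circ'}(x_\circ-x_\circ')\cdot(x-x_\circ)+p_{*,x_\circ'}(x_\circ-x_\circ')$ and projecting onto the degree-2 spherical harmonics on $\partial B_r(x_\circ)$---noting that $\mathrm{tr}(A_{x_\circ}-A_{x_\circ'})=0$ since both matrices have trace $1$---extracts $\|A_{x_\circ}-A_{x_\circ'}\|\lesssim r^{\lambda-2}$.

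With (E1) and (E2) in place, the Whitney hypotheses are immediate: (i) $Q_{x_\circ}(x_\circ)=0=\nabla u(x_\circ)$; for (ii) at $k=0$, $|Q_{x_\circ}(x)-Q_x(x)|=|A_{x_\circ}(x-x_\circ)|=|\nabla u(x)-A_{x_\circ}(x-x_\circ)|\leq C|x-x_\circ|^{\lambda-1}$ by (E1) and $\nabla u\equiv 0$ on $K$; for $k=1$, (E2) gives the required bound; for $2\leq k\leq \ell-1$, both $D^k Q_{x_\circ}$ and $D^k Q_x$ vanish. The main technical obstacle in this plan is (E1): upgrading the Almgren-frequency-based $L^2$ decay to pointwise and gradient decay at the sharp rate $\lambda$. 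This is the one step where methods beyond the monotonicity machinery already developed enter, and it depends crucially on the smallness of the contact set quantified in Proposition~\ref{decayest}.
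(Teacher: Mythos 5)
Your high-level strategy coincides with the paper's: build second-order polynomial jets from $p_{*,x_\circ}$, verify Whitney compatibility, extend, and apply the implicit function theorem. The paper applies Lemma~\ref{WET} to the scalar $f\equiv 0$ with degree-$\ell$ polynomials $P_{x_\circ}(x)=p_{*,x_\circ}(x-x_\circ)$ and then works with $\nabla F$; you apply it directly to the vector field $\nabla u|_K\equiv 0$ with the linear polynomials $A_{x_\circ}(x-x_\circ)$. These are equivalent reformulations.

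The substantive issue is your derivation of (E1), specifically the gradient half. The paper never proves---and never needs---a pointwise bound $|\nabla u(x)-A_{x_\circ}(x-x_\circ)|\leq C|x-x_\circ|^{\lambda-1}$. Its proof runs entirely at the $L^2$ level: Lemma~\ref{Hincreasing} gives $\|u(x_\circ+\rho\,\cdot\,)-p_{*,x_\circ}(\rho\,\cdot\,)\|_{L^2(\partial B_1)}\leq C\rho^\lambda$ uniformly on the (closed, by upper semicontinuity of frequency) set $K$, and a triangle inequality as in~\eqref{triangle!} gives $\|(P_{x_\circ}-P_x)(\rho\,\cdot\,)\|_{L^2(B_1)}\leq C\rho^\lambda$ with $\rho=|x-x_\circ|$. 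Equivalence of norms on the finite-dimensional space of quadratic polynomials then delivers \emph{all} the Whitney conditions at once, including the $k=0$ condition $|A_{x_\circ}(x-x_\circ)|\leq C|x-x_\circ|^{\lambda-1}$ (this comes out of the \emph{linear coefficient} of $P_{x_\circ}-P_x$; one gets $|A_x(x-x_\circ)|\lesssim\rho^{\lambda-1}$ and transfers to $A_{x_\circ}$ using the $k=1$ bound). No pointwise or gradient estimate on $u-p_{*,x_\circ}$ is used.

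Your proposed route to (E1) via a harmonic-plus-Newtonian-potential split fails for the gradient part. Proposition~\ref{decayest} controls only the \emph{measure} of the contact set, $|\{u=0\}\cap B_r(x_\circ)|\lesssim r^{n+\lambda-2}$. The gradient of the Newtonian potential of a set of that measure inside $B_r$ is, in the worst concentration scenario, of size $\sim r^{1+(\lambda-2)/n}$, which exceeds $r^{\lambda-1}$ for every $n\geq 2$ and every $\lambda>2$; the exponents simply do not match. (The stronger distance-to-$L$ bound~\eqref{eq:dist L} is only established for $m=n-1$, and even there the gradient estimate picks up a logarithm.) The value part of (E1) \emph{is} recoverable---$(u-p_{*,x_\circ}(\cdot-x_\circ))_+$ is subharmonic and its negative part is controlled by the mean value property, so the $L^2\to L^\infty$ upgrade works---but that alone does not close the $k=0$ Whitney condition as you have written it. The gap is fixable: replace (E1) by the $L^2$ polynomial comparison above and the rest of your argument (including your projection-onto-degree-2-spherical-harmonics derivation of (E2), which is fine) goes through.

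One smaller point worth stating explicitly: you need $K$ to be a compact subset of $S_{m,\lambda}$ containing $S_{m,\lambda}\cap B_{1/4}$, which requires knowing that $S_\lambda$ is closed; this follows from the upper semicontinuity of $x_\circ\mapsto\phi(0^+,u(x_\circ+\cdot)-p_{*,x_\circ})$, a consequence of Proposition~\ref{lemAlmgren}, and should be recorded.
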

\begin{proof}
We prove the result in a neighborhood of the origin.

We begin by recalling that the singular set $\Sigma=\cup_{m=0}^n\Sigma_m$ is closed (this is a classical fact that follows from the relative openness of the set of regular points, see \cite{C77}).
In addition, we note that the monotonicity of the frequency implies that the map
$$
\Sigma\ni x_\circ \mapsto \phi\big(0^+, u(x_\circ +\,\cdot\,)-p_{*,x_\circ} \big)
$$
is upper semicontinuous, being the monotone decreasing limit (as $r\downarrow 0$) of the continuous functions
$$
\Sigma\ni x_\circ \mapsto \phi\big(r, u(x_\circ +\,\cdot\,)-p_{*,x_\circ} \big), \qquad r>0.
$$
Thanks to these facts we deduce that
\[
S_{\lambda} := \big\{ x_\circ\in \Sigma \ :\  \phi\big(0^+, u(x_\circ +\,\cdot\,)-p_{*,x_\circ} \big) \ge \lambda \big\}
\]
is closed. In particular, if we define the compact set $K:=
S_{\lambda}\cap \overline{B_{1/4}}$, we have that $\overline{S_{m,\lambda}\cap B_{1/4}}\subset K$.

Now, given $x_\circ \in K$, we define 
\[
P_{x_\circ} (x) :=  p_{*, x_{\circ}} (x - x_\circ). 
\] 
We want to show that $K$, $f\equiv 0$, and $\{P_{x_\circ}\}_{x_\circ \in K}$ satisfy the assumptions of Lemma \ref{WET} with $\ell$ and $\beta$ as defined above.

Note that, by Lemma \ref{Hincreasing} and the definition of $S_{\lambda}$, for all $x_\circ \in K$ we have
\begin{equation}
\label{eq:bound}
 \|u(x_\circ + \rho\,\cdot\,)-p_{*,x_\circ}(\rho\,\cdot\,)\|_{L^2(B_1)}  \le (2\rho)^\lambda \big\|u\big(x_\circ + {\textstyle \frac 1 2}\,\cdot\, \big) - p_{*,x_\circ}\big({\textstyle \frac 1 2}\,\cdot\,\big)\big\|_{L^2{(\partial B_1)}}
\end{equation}
for all $\rho\in (0,1/2].$

Now, given $x_\circ,x \in K$,
set $\rho := |x-x_\circ|$ (note that $\rho \leq 1/2$), and for simplicity of notation assume that $x_\circ=0$. 
Then it follows from \eqref{eq:bound}
applied both at $0$ and $x$ that
\begin{equation}\label{triangle!}
\begin{split}
\| (P_{0} -P_{x})(\rho\,\cdot\,)\|_{L^2(B_{1})} &\le  \|u( \rho\,\cdot\,)-P_{0}( \rho\,\cdot\,)\|_{L^2(B_{1})} + \|u(\rho\,\cdot\,)-P_x( \rho\,\cdot\,) \|_{L^2(B_{1})}
\\
&= \big\|u(\rho\,\cdot\,)- p_*( \rho\,\cdot\,)\big\|_{L^2(B_{1})} +\big \|u\big( \rho \cdot \big)-p_{*,x} \big(\rho(\cdot-x/\rho)\big)\big\|_{L^2(B_{1})}
\\
&\le \big\|u(\rho\,\cdot\,)- p_*( \rho\,\cdot\,)\big\|_{L^2(B_{1})} +\big \|u\big(\rho \cdot \big)-p_{*,x} \big(\rho(\cdot-x/\rho)\big)\big\|_{L^2(B_{2}(x/\rho))}
\\
&= \big\|u(\rho\,\cdot\,)- p_*( \rho\,\cdot\,)\big\|_{L^2(B_{1})} +\big \|u\big(x+ \rho \cdot \big)-p_{*,x}( \rho\,\cdot\,)\big\|_{L^2(B_{2})}
\\
&\le C \rho^\lambda  .
\end{split}
\end{equation}
In particular, since the norm $\|\cdot\|_{L^2(B_1)}$ is equivalent to the norm $\|\cdot\|_{C^\ell(B_1)}$  on the space of  quadratic polynomials, we obtain the existence of a constant $C>0$ such that
\[
| D^kP_{x_\circ} (x) -D^k P_x(x) | \le  C |x-x_\circ|^{\ell+\beta-k} \quad  \mbox{fo all } x_\circ,x\in K\mbox{ and }k\in\{0,1, \dots, \ell\}
\]
(recall that $\lambda=\ell+\beta$).
Since $P_{x_\circ}(x_\circ) = 0$ for $x_\circ\in K$, applying Lemma \ref{WET} we find a function $F \in C^{\ell,\beta}(\R^n)$ such that  
\[
F(x) = p_{x_\circ}(x-x_\circ) + O(|x-x_\circ|^{\ell+\beta}) \quad \mbox{for all } x_\circ \in K. 
\]
Therefore 
\[
S_{m,\lambda} \cap B_{1/4}\subset K\subset \{\nabla F =0\} = \bigcap_{i=1}^n \{\partial_{x_i} F=0\}.
\]
Now, if $x_\circ \in S_{m,\ell}\cap B_{1/4}$ then $\dim {\rm ker\,}\big(D^2 F(x_\circ)\big)=\dim {\rm ker\,}\big(D^2 p_{*,x_\circ}(0)\big)=m$. This implies that, up to a change of coordinates, the rank of $D^2_{(x_1,\ldots, x_{n-m})} F(x_\circ)$  is maximal, and we conclude by the Implicit Function Theorem that, in a neighborhood of $x_\circ$,
$\bigcap_{i=1}^{n-m} \{\partial_{x_i} F=0\}$ is a $m$-dimensional manifold of class $C^{\ell-1,\beta}$ that contains $S_{m,\lambda}$.
\end{proof}

We are now ready to prove Theorem \ref{thm:main} (except for the $C^2$ regularity in dimension $2$ that will follow from Theorem \ref{thm:C2} in the next section).

\begin{proof}[Proof of Theorem \ref{thm:main}]
We need to prove:
\begin{itemize}
\item[(a)] For $n=2$,  $\Sigma_1$ is locally  contained in a $C^{2}$ curve.
\item[(b)] For $n\ge 3$,  $\Sigma^g_{n-1}$ is locally contained in a $C^{1,1}$ $(n-1)$-dimensional manifold and
$\Sigma^a_{n-1}$ is a relatively  open subset of $\Sigma_{n-1}$ satisfying  ${\rm dim}_{\mathcal H}(\Sigma^a_{n-1})\leq n-3$ (the latter set is discrete  for $n=3$). 
\item[(c)] For $n\ge 3$,  $\Sigma_{n-1}$ can be locally covered by a $C^{1,\alpha_\circ}$  $(n-1)$-dimensional manifold, for some dimensional exponent $\alpha_\circ>0$. 
\item[(d)]  For $n\ge 3$ and $m=1,\ldots,n-2$, $\Sigma_m^g$ can be locally covered by a $C^{1,1}$
$m$-dimensional manifold and $\Sigma^a_m$ is a relatively open subset of $\Sigma_{m}$ satisfying ${\rm dim}_{\mathcal H}(\Sigma^a_m)\leq m-1$ (the latter set is discrete when $m=1$).
\item[(e)] For $n\ge 3$ and $m=1,\ldots,n-2$,  $\Sigma_{m}$ can be locally covered by a $C^{1,\log^{\ep_\circ}}$ $m$-dimensional manifold, for some dimensional exponent $\ep_\circ>0$. 
\end{itemize}

Throughout the proof, we will use the definition of $S_{m,\lambda}$ given in  Lemma \ref{lem:laststep}.
\smallskip

{\it - Proof of (a).} By Lemma \ref{possiblefreq} we have that $\Sigma_{1} = S_{1,3}$. Thus, applying Lemma \ref{lem:laststep}, we obtain that $\Sigma_{1}$ is locally covered by a $C^{1,1}$ curve. To conclude that $\Sigma_1$ can be covered by a $C^2$ curve, we apply Theorem \ref{thm:C2} from the next section.

\smallskip

{\it - Proof of (b).}  By Lemma \ref{lem22}, the Hausdorff dimension of  $\Sigma^a_{n-1}$ is at most $n-3$. Also, by definition we have $\Sigma^g_{n-1} = S_{n-1,3}$,  thus $\Sigma^g$ can be locally covered by a $C^{1,1}$ $(n-1)$-dimensional manifold, thanks to Lemma \ref{lem:laststep}.
The fact that $\Sigma_{n-1}^g$ is relatively closed in $\Sigma_{n-1}$ is a consequence of the fact that $x_\circ \mapsto \phi\big(0^+, u(x_\circ + \,\cdot\,) -p_{*,x_\circ}\big)$ is upper semicontinuous, as shown in the proof of Lemma \ref{lem:laststep}. In the case $n=3$, Lemma \ref{lem11} gives that $\Sigma_{n-1}^a$ is a discrete set.

\smallskip

{\it - Proof of (c).} By Proposition \ref{propblowup}(b) we have that the whole stratum $\Sigma_{n-1}$ is contained in $S_{n-1,2+\alpha_\circ}$, for some dimensional constant $\alpha_\circ>0$. As a consequence, the whole stratum $\Sigma_{n-1}$ can be covered by a $C^{1,\alpha_\circ}$ $(n-1)$-dimensional manifold.

\smallskip

{\it - Proof of (d).}  By Lemma \ref{lem12}, for $1\le m\le n-2$ the Hausdorff dimension of  $\Sigma^a_{m}$ is at most $m-1$ (in the case $m=1$, Lemma \ref{lem12} gives that $\Sigma_{1}^a$ is a discrete set). Also, since by definition $\Sigma^g_{m} = S_{m,3}$, applying again  Lemma \ref{lem:laststep} we obtain that $\Sigma^g_m$ can be locally covered by a $C^{1,1}$ $m$-dimensional manifold. 
Finally, as in the proof of (d), the relative closedness of $\Sigma^g$ follows from the upper semicontinuity of the frequency.

\smallskip

{\it - Proof of (e).} Let $m\leq n-2$.
We claim that the following estimate holds:
\begin{equation}\label{colomboandco}
\big\| u(x_\circ +r\,\cdot\,)-p_{*,x_\circ})(r \,\cdot\,) \big\|_{L^2(\partial B_1)} \le C r^{2} \log^{-\ep_\circ} (1/r) \quad \forall x_o\in \Sigma_m\cap B_{1/2}, \ \forall \,r\in (0,1/2).
\end{equation}
Observe that it is enough to prove \eqref{colomboandco} at points $x_\circ$ such that 
\[\lambda_{*,x_\circ} :=\phi\big(0^+, u(x_\circ + \,\cdot\,) -p_{*,x_\circ}\big) =2.\]
 Indeed, if $\lambda_{*,x_\circ}>2$ then Proposition \ref{propblowup}(a) yields $\lambda_{*,x_\circ}\ge 3$,
 hence \eqref{colomboandco} trivially holds (actually, with a much stronger estimate) thanks to Lemma \ref{Hincreasing}.
So, without loss of generality, we can assume that $\lambda_{*,x_\circ} =2$. 

Let $M>1$ be a large constant to be fixed later.
By Caffarelli's asymptotic convexity estimate \cite{C77} (see also \cite[Corollary 5]{C98}), we have
\begin{equation}
\label{eq:Caff conv}
D^2 u \ge -C \log^{-\ep_\circ}(1/r) \,{\rm Id} \quad \mbox{in } B_r(x_\circ)\qquad \forall\,x_\circ \in \Sigma,
\end{equation}
for some dimensional exponent $\ep_\circ>0$.
Now, let $a_{r} :=\| r^{-2}u(x_\circ + r\cdot)-p_{*x_\circ}(r\,\cdot\,)\|_{L^2}=o(1)$ and $L_{x_\circ}:=\{p_{*,x_\circ}=0\}$.
Thanks to \eqref{eq:Caff conv} we have
\begin{equation}
\label{eq:Caff conv2}
\partial_{\boldsymbol e\boldsymbol e} \big( r^{-2}u(x_\circ + r\cdot)-p_{*x_\circ}(r\,\cdot\,) \big) = \partial_{\boldsymbol e\boldsymbol e} \bigl(r^{-2}u(x_\circ + r\cdot)\bigr) \ge -C \log^{-\ep_\circ}(1/r) \qquad \text{in $B_1$}
\end{equation}
for all $\boldsymbol e\in L_{x_\circ}\cap \mathbb S^{n-1}$.

Assume by contradiction that
\[ 
a_{r_k} \ge M \log^{-\ep_\circ}(1/r_k)  \quad \mbox{ for some } r_k \downarrow 0.
\]
Then, recalling \eqref{eq:Caff conv2}, for any $\boldsymbol e\in L_{x_\circ}\cap \mathbb S^{n-1}$ we find
\[
\partial_{\boldsymbol e\boldsymbol e } \tilde w_{r_k} = \frac{1}{a_k} \partial_{\boldsymbol e\boldsymbol e} \big( r^{-2}u(x_\circ + r\cdot)-p_{*x_\circ}(r\,\cdot\,) \big)  \ge -\frac{C}{M} \quad \mbox{in } B_1.
\]
Thus, since $\tilde w_{r_{k_\ell}} \rightarrow q$ in $L^2(B_1)$ for some subsequence $r_{k_\ell}$ (see  Proposition \ref{propblowup}(a)), we have
\begin{equation}\label{semiconvlim}
\partial_{\boldsymbol e\boldsymbol e } q   \ge -\frac{C}{M}\quad \mbox{in } B_1, \quad \forall\, \boldsymbol e\in L_{x_\circ}\cap \mathbb S^{n-1}.
\end{equation}
In addition, since $\lambda_{*,x_\circ} = 2$,
Proposition \ref{propblowup}(a) implies that $q$ is a quadratic polynomial satisfying
\[
D^2 q |_L \le  0,  \quad D^2 q |_{L^\perp} \ge 0,  \quad  {\rm tr} (D^2 q)=0,   \quad \mbox{and} \quad \|q\|_{L^2(\partial B_1)} =1.
\]
Thanks to this fact, a simple compactness argument shows that there exists ${\boldsymbol e'}\in L_{x_\circ}\cap \mathbb S^{n-1}$ such that 
\[
\partial_{{\boldsymbol e'}{\boldsymbol e'}} q\leq -c_1<0 \quad \mbox{in } B_1,
\]
for some dimensional constant $c_1>0$. This contradicts \eqref{semiconvlim} for $M$ sufficiently large, thus establishing \eqref{colomboandco}.

Thanks to \eqref{colomboandco},
if we define 
\[
P_{x_\circ}(x) := p_{*,x_\circ}(x-x_\circ)\qquad \forall\, x_0\in \Sigma_m,
\]
the argument in the proof of Lemma \ref{lem:laststep} yields
\begin{equation}\label{controlPs}
| D^kP_{x_\circ} (x) -D^k P_x(x) | \le  C |x-x_\circ|^{2-k} \log^{-\ep_\circ}\big(|x-x_\circ|\big) \quad  \forall x_\circ, x \in \Sigma_m\cap \overline{B_{1/2}} , \ k\in\{0,1, 2\}.
\end{equation}
Hence, by Whitney's Extension Theorem (see \cite{Fef09} and the reference therein) and the argument in the proof of Lemma \ref{lem:laststep}, we conclude that \eqref{controlPs} that $\Sigma_m$ is locally contained in a $C^{1,\log^{\ep_\circ}}$ $m$-dimensional manifold. 
\end{proof}

\section{On third order blow-ups}
\label{sect:C2}

In this section we investigate the uniqueness/continuity of third order blow-ups for points in $\Sigma_{m}$,
and prove that $\Sigma_{m}$
can be covered by $C^2$ manifolds, up to a lower dimensional set (see Theorem \ref{thm:C2} below).

We begin by showing the validity of a third-order almost-monotonicity formula
of Monneau-type for all singular points. 

\begin{lemma}
\label{lem:mon 3}
Let $0$ be a singular point, assume that $\lambda_*:=\phi(0^+,u-p_*)\geq 3,$ and let $q$ be a $3$-homogeneous harmonic polynomial that vanishes on $L:=\{p_*=0\}$. Set $v:=u-p_*-q$,
and let $H_\lambda$ be as in Lemma \ref{Hincreasing}.
Then
$$
\frac{d}{dr}H_3(r,v) \geq -C\bigg\|\frac{q^2}{p_*}\bigg\|_{L^\infty(B_1)},
$$
where $C>0$ is a constant that can be chosen uniformly at all singular points in a neighborhood of $0$.
\end{lemma}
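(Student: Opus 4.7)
The plan is to compute $\frac{d}{dr}H_3(r,v)$ directly and show that, modulo a sign-definite term we can discard via Almgren monotonicity, the only defect comes from the contact set $\{u=0\}$, where the remainder can be controlled by $q^2/p_*$ together with the density decay of the contact set.

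\smallskip

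First, writing $v_\nu := \nu\cdot\nabla v$, the usual change of variables on $\partial B_r$ gives
\[
\frac{r^{n+6}}{2}\frac{d}{dr}H_3(r,v) = \int_{\partial B_r} v\bigl(x\cdot\nabla v - 3v\bigr).
\]
Since $q$ is $3$-homogeneous, $x\cdot \nabla q = 3q$, and hence $x\cdot\nabla v-3v = x\cdot\nabla w - 3w$ where $w := u-p_*$. Expanding $v = w-q$ and integrating by parts, using that $\Delta q = 0$, $q_\nu|_{\partial B_r} = 3q/r$, $w\Delta w = p_*\chi_{\{u=0\}}$ and $\Delta w = -\chi_{\{u=0\}}$, I expect the cross terms involving $q$ to collapse, producing the identity
\[
\frac{r^{n+6}}{2}\frac{d}{dr}H_3(r,v) = \Bigl(r\!\int_{B_r}|\nabla w|^2 - 3\!\int_{\partial B_r} w^2\Bigr) + r\!\int_{B_r\cap\{u=0\}}(p_*+q).
\]

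\smallskip

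The first bracket is nonnegative: since $\phi(0^+,w) = \lambda_*\geq 3$, the monotonicity of the Almgren frequency (Proposition \ref{lemAlmgren}) gives $\phi(r,w)\geq 3$ for every $r>0$, which is exactly $r\int_{B_r}|\nabla w|^2 \geq 3\int_{\partial B_r}w^2$. It remains to bound the second term from below. On $\{u=0\}$ we use AM--GM on the product $|q| = \sqrt{p_*}\cdot|q|/\sqrt{p_*}$ (and the hypothesis $q|_L\equiv 0$ to handle the set $\{p_*=0\}$) to obtain the pointwise inequality
\[
p_* + q \;\geq\; p_* - |q| \;\geq\; \tfrac{p_*}{2} - \tfrac{q^2}{2p_*} \;\geq\; -\tfrac{q^2}{2p_*}.
\]
The function $q^2/p_*$ is $4$-homogeneous, so $\|q^2/p_*\|_{L^\infty(B_r)} = r^4\|q^2/p_*\|_{L^\infty(B_1)}$; combining this with the contact set density estimate from Proposition \ref{decayest} (which, using $\lambda_*\geq 3$, yields $|\{u=0\}\cap B_r|\leq C r^{n+1}$ with $C$ uniform in a neighborhood of $0$) gives
\[
\int_{B_r\cap\{u=0\}}(p_*+q) \;\geq\; -C\,r^{n+5}\,\|q^2/p_*\|_{L^\infty(B_1)}.
\]
Multiplying by $r$ and by $2/r^{n+6}$ produces the claimed bound $\frac{d}{dr}H_3(r,v)\geq -C\|q^2/p_*\|_{L^\infty(B_1)}$.

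\smallskip

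The main algebraic obstacle is the cancellation of the $q$-cross terms in Step~1: one must be careful that the boundary integrals involving $q\,x\cdot\nabla w$ and $3qw$ combine with $r\int_{B_r}q\Delta w$ so as to leave only the contact-set integral of $q$, which then happily merges with the contact-set integral of $p_*$ into the single term $(p_*+q)\chi_{\{u=0\}}$. The main analytic point is the use of the $(n+1)$-dimensional (rather than merely $n$-dimensional) bound on $|\{u=0\}\cap B_r|$, since a trivial estimate $|\{u=0\}\cap B_r|\leq C r^n$ would only give a $1/r$-type defect, too weak for the stated inequality; the improved decay from Proposition \ref{decayest} at points with frequency $\geq 3$ is precisely what upgrades this to the bounded error term.
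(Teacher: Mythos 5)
Your proof is correct and follows essentially the same route as the paper. The identity you derive by direct integration by parts on $\int_{\partial B_r}v(x\cdot\nabla w-3w)$ is the unrolled form of the paper's telescoping computation $0\le W_3(1,w_r)-W_3(1,q)=W_3(1,v_r)$ (with $W_3$ the modified Weiss quantity of Lemma \ref{modifieWeiss}), and both arguments close the estimate by combining Almgren monotonicity --- which gives $\phi(r,w)\ge\lambda_*\ge 3$, i.e.\ nonnegativity of $r\int_{B_r}|\nabla w|^2-3\int_{\partial B_r}w^2$ --- with the pointwise bound $p_*+q\ge -q^2/(2p_*)$ and the contact-set density decay of Proposition \ref{decayest}.
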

\begin{proof}
Set $w:=u-p_*$, $w_r(x):=r^{-3}w(rx)$, and $v_r(x):=r^{-3}v(rx)=w_r(x)-q(x)$.
Then $H_3(r,v)=H_3(1,v_r)$, and we have
\begin{equation}
\label{eq:der mon 3}
\frac{d}{dr}H_3(r,v)=\frac{d}{dr}H_3(1,v_r)=\frac{2}{r}\int_{\partial B_1}v_r((v_r)_\nu-3v_r).
\end{equation}
We now observe that, because $\lambda_*\geq 3$, it holds
$$
W_3(1,w_r)=(\phi(1,w_r)-3)H_3 \geq 0
$$
(here $W_\lambda$ is as in Lemma \ref{modifieWeiss}).
Also, because $q$ is a $3$-homogeneous harmonic polynomial, one easily checks that
$
W_3(1,q)=0.
$
Hence, similarly to the proof of Lemma \ref{lemWeiss}, we get
\[
\begin{split}
0 &\le W_3(1,w_r) -W_3(1,q) 
\\
&=\int_{B_1} \Bigl(|\nabla v_r|^2 + 2\nabla v_r\cdot \nabla q\Bigr) -3\int_{\partial B_1} \Bigl(v_r^2 + 2v_r q\Bigr)
\\
&= \int_{B_1} |\nabla v_r|^2  -3\int_{\partial B_1} v_r^2 + \int_{\partial B_1}   v_r(x\cdot\nabla q -3q)
\\
&= \int_{B_1} |\nabla v_r|^2 -3\int_{\partial B_1} v_r^2 \\
&= \int_{B_1} -v_r\Delta v_r +\int_{\partial B_1} v_r((v_r)_\nu-3v_r),
\end{split}
\]
where we used that $\Delta q\equiv 0$ and $x\cdot \nabla q=3q$. Thus, recalling \eqref{eq:der mon 3} we obtain
$$
\frac{d}{dr}H_3(r,v)=\frac2{r}\int_{B_1}v_r\Delta v_r=\frac{2}{r^{n+5}}\int_{B_r}v\Delta v.
$$
Now, since $\Delta v=\Delta u-\Delta p_*-\Delta q=0$ inside $\{u>0\}$, we have
$$
v\Delta v=(p_*+q)\chi_{\{u=0\}},
$$
therefore
$$
\frac{d}{dr}H_3(r,v)=\frac2{r^{n+5}}\int_{B_1}v_r\Delta v_r=\frac{2}{r^{n+5}}\int_{B_r\cap\{u=0\}}(p_*+q).
$$
Noticing that
$$
p_*+q \ge p_* -|q| \ge \bigg(\sqrt{p_*}-\sqrt{\frac{|q|}{2p_*}}\bigg)^2-\frac{q^2}{2p_*} \geq -\frac{q^2}{2p_*}
$$
and that $\frac{q^2}{2p_*}$ is a $4$-homogeneous polynomial (this follows from the fact that $q=0$ on $\{p_*=0\}$, hence $q$ is divisible by $\sqrt{p_*}$),
we conclude that
$$
\frac{d}{dr}H_3(r,v)=-\frac{1}{r}\int_{B_1\cap\{u(r\,\cdot\,)=0\}}\frac{q^2}{p_*}\geq -\bigg\|\frac{q^2}{p_*}\bigg\|_{L^\infty(B_1)}\frac{|\{u(r\,\cdot\,)=0\}\cap B_1|}{r}.
$$
Since $\lambda_*\geq 3$,
the result follows by Proposition \ref{decayest}.
\end{proof}

In order to apply the previous result, we need to check the size of the points where any third-order blow-up is harmonic and vanishes on $\{p_*=0\}$.
We begin with the case $n=2$.

\begin{lemma}\label{lemdim20}
Let $n=2$, $0\in \Sigma_1$, and $w:=u-p_*$. Assume that there exists a sequence $x_k\in \Sigma_1$ with $x_k \to 0$, and that
\[
w_{r_k} := \frac{(u-p_*)(r_k \,\cdot\, )}{r_k^3} \rightharpoonup \hat q
\qquad \text{in $W^{1,2}(B_1)$.}
\]
Then $\hat q$ is a $3$-homogeneous harmonic polynomial vanishing on $L:=\{p_*=0\}$ and satisfying $\|\hat q\|_{L^2(\partial B_1)}=H_3(0^+,w)$.
%
%Also, if we use $(X,Y)$ as coordinates in $\R^2$ and we assume that
%$L=\{p_*=0\}= \{X=0\}$,
%then either $q\equiv q_+$ or $q\equiv q_-$, where 
%\begin{equation}\label{q+-}
%q_+(X,Y) := \frac 1 \pi(3XY^2 - X^3) \quad \mbox{or}\quad  q_-(X,Y) = -\frac 1 \pi(3XY^2 -X^3).
%\end{equation}
\end{lemma}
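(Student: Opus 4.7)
The plan is to play the third-order blow-up $w_{r_k}$ off against the Almgren-normalized blow-up $\tilde w_{r_k}$ of Proposition \ref{propblowup}, using the scalar bridge $\|w_{r_k}\|_{L^2(\partial B_1)}$ to transfer information between the two. First, since $n=2$ and $0\in\Sigma_1$, Lemma \ref{possiblefreq} gives $\lambda_*:=\phi(0^+,w)\ge 3$, so Lemma \ref{Hincreasing} (applied with $\lambda=3\le\lambda_*$) ensures that $r\mapsto H_3(r,w)$ is monotone nondecreasing; in particular $H_3(0^+,w)$ exists as a finite nonnegative limit. A one-line change of variables gives the pointwise identity $\|w_{r_k}\|_{L^2(\partial B_1)}^2=H_3(r_k,w)$, which --- combined with the trace compactness $W^{1,2}(B_1)\hookrightarrow L^2(\partial B_1)$ and the weak convergence $w_{r_k}\rightharpoonup\hat q$ --- will yield the norm identity $\|\hat q\|_{L^2(\partial B_1)}^2=H_3(0^+,w)$ (which I read as the content of the last assertion of the lemma).

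The argument then splits on whether $H_3(0^+,w)$ vanishes. If $H_3(0^+,w)=0$, integrating the monotone function $H_3(\cdot,w)$ against $t^{n+5}\,dt$ over $[0,1]$ forces $\|w_{r_k}\|_{L^2(B_1)}\to 0$, whence $\hat q\equiv 0$ and all conclusions are trivial. In the nondegenerate case $H_3(0^+,w)>0$, I set $q:=\hat q/\sqrt{H_3(0^+,w)}$. Since $\tilde w_{r_k}=w_{r_k}/\|w_{r_k}\|_{L^2(\partial B_1)}$ and the denominators converge to $\sqrt{H_3(0^+,w)}>0$, a subsequence of $\tilde w_{r_k}$ converges weakly in $W^{1,2}(B_1)$ to $q$. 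Proposition \ref{propblowup}(b) (applicable because $m=n-1=1$) then identifies $q$ as a $\lambda_*$-homogeneous solution of the Signorini problem in $\R^2$ with obstacle $0$ on $L$ and $\|q\|_{L^2(\partial B_1)}=1$.

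The last step is to force $\lambda_*=3$ and to classify $q$. If instead $\lambda_*>3$, the upper bound in \eqref{eq:decay w} would give $\int_{\partial B_r}w^2\le Cr^{n-1+2\lambda_*}$ for small $r$, hence $H_3(r,w)\le Cr^{2(\lambda_*-3)}\to 0$, contradicting $H_3(0^+,w)>0$. So $\lambda_*=3$ and $q$ is a $3$-homogeneous planar Signorini solution with obstacle $0$ on $L$. The separation-of-variables classification used in the proof of Lemma \ref{possiblefreq} (cf.~\cite{FS17}) then shows that every such $q$ is a harmonic polynomial vanishing on $L$: the symmetric part across $L$ would have to be a harmonic degree-$3$ polynomial nonnegative on $L=\{x_2=0\}$, but this reduces on $L$ to $ax_1^3$ and forces $a=0$; the antisymmetric part is automatically harmonic, of the form $c\,x_2(3x_1^2-x_2^2)$, which vanishes on $L$. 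Scaling back, $\hat q=\sqrt{H_3(0^+,w)}\,q$ inherits all these properties.

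The main subtlety is bridging between the explicitly scaled $w_{r_k}$ and the Almgren-normalized $\tilde w_{r_k}$ so that Proposition \ref{propblowup}(b) can be invoked; the monotonicity of $H_3$ from Lemma \ref{Hincreasing} makes this possible, and \eqref{eq:decay w} is what ultimately rules out $\lambda_*>3$ in the nondegenerate case. The hypothesis that $x_k\in\Sigma_1$ accumulate at $0$ does not enter essentially into the above chain; it situates the lemma within the broader program of this section (uniqueness/continuity of third-order blow-ups at non-isolated singular points) and could be exploited, if needed, via Lemma \ref{cod1} to further constrain the location of zeros of $\hat q$.
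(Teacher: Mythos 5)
Your reduction to the normalized blow-up $\tilde w_{r_k}=w_{r_k}/\|w_{r_k}\|_{L^2(\partial B_1)}$, the dichotomy on $H_3(0^+,w)$, and the deduction that $\lambda_*=3$ in the nondegenerate case via \eqref{eq:decay w} are all fine and match the paper's opening move. The problem is the final classification step. You assert that, because $\lambda_*=3$, the even part $q^{\rm even}$ of the Signorini blow-up ``would have to be a harmonic degree-$3$ polynomial nonnegative on $L$,'' and you then kill it with the sign of $ax_1^3$. This is false: there \emph{is} a nontrivial $3$-homogeneous even solution of the two-dimensional Signorini problem \eqref{TOP} that is not a harmonic polynomial, namely (with $L=\{x_2=0\}$)
\[
q_0(x_1,x_2) \ :=\ -\bigl(3x_1^2-x_2^2\bigr)\,|x_2|,
\]
i.e.\ $q_0=-r^3\sin(3|\theta|)$. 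It is even across $L$, $3$-homogeneous, vanishes identically on $L$ (so $q_0\ge 0$ and $q_0\Delta q_0=0$ on $L$), is harmonic in $\R^2\setminus L$, and $\Delta q_0=-6x_1^2\,\delta_L\le 0$; one can also check it satisfies the Monneau-type constraint \eqref{orthogonality}. In fact this $q_0$ is precisely the $3$-homogeneous even solution whose existence is implicit in the classification you cite from the proof of Lemma \ref{possiblefreq} (the homogeneity $3$ appears in the list $\{1,2,3,4,\dots\}$ for even solutions), so the classification does not do what you claim.

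The hypothesis that singular points $x_k\in\Sigma_1$ accumulate at $0$ --- which you explicitly discard as ``not entering essentially'' --- is exactly what is needed to rule $q_0$ out, and it is the content of the paper's actual argument. Applying Lemma \ref{cod1} at $r_k=2|x_k|$ (using that $\lambda_{*,x_k}\ge 3$ by Lemma \ref{possiblefreq}) yields a point $y_\infty\in L\cap\partial B_{1/2}$ with $\ave_{\partial B_\rho}q^{\rm even}(y_\infty+\cdot)^2\le C\rho^6$; since $q^{\rm even}$ is also $3$-homogeneous about $0$, it is invariant under translation along $L$, hence (being harmonic off $L$) affine in $x_2$ on each side of $L$, hence identically $0$ by homogeneity. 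The function $q_0$ above has a Lipschitz kink at every point of $L$, so it fails the decay estimate at $y_\infty$ and is precisely what this step excludes. Once $q^{\rm even}\equiv 0$, the remainder of your argument (that the odd part is a globally harmonic $3$-homogeneous polynomial vanishing on $L$, and the rescaling $\hat q=\sqrt{H_3(0^+,w)}\,q$) is correct. So the proof can be repaired by reinstating the use of Lemma \ref{cod1}, but as written it has a genuine gap.
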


\begin{proof}
Note that if $\phi(0^+,u-p_*)>3$ then $\|w_r\|_{L^2(B_1)}=o(r^3)$ (see \eqref{eq:decay w}), hence $H_3(0^+,w)=0$ and the result holds with $\hat q\equiv 0$. So we can assume that $\phi(0^+,u-p_*)=3$.

Set
$$
\tilde w_{r_k}:=\frac{w_{r_k}}{\|w_{r_k}\|_{L^2(\partial B_1)}}=\frac{w_{r_k}}{H_3(r_k,w)},
$$
and denote by $q$ a limit point for $\tilde w_{r_k}$. 
Note that $\|\hat q\|_{L^2(\partial B_1)}=1$. Also, since $r\mapsto H_3(r,w)$ is monotone nondecreasing (see Lemma \ref{Hincreasing}) and $\hat q\not \equiv 0$, we deduce that $$\hat q=H_3(0^+,w)\,q.$$
This proves that $\|\hat q\|_{L^2(\partial B_1)}=H_3(0^+,w)$.
To conclude the proof it suffices to prove that $q$ is a $3$-homogeneous harmonic polynomial vanishing on $L:=\{p_*=0\}$.

We know by Proposition \ref{propblowup}(b) that $q$ is a $3$-homogeneous solutions of Signorini, see
\eqref{TOP}.  Also,
applying Lemma \ref{cod1} with $r_k=2|x_k|$, we deduce that $y_k:=\frac{x_k}{r_k}\to y_\infty \in L\cap \partial B_{1/2}$ and that (thanks to \eqref{eq:q even y infty})
$$
\ave_{\partial B_\rho}q^{\rm even}(y_\infty+x)^2 \leq C\rho^6
$$
(note that for $n=2$ we have that $\lambda_{*,x_k}\geq 3$ for all $k$, see Lemma \ref{possiblefreq}).
This implies in particular that $q^{\rm even}$ is $3$-homogeneous both with respect to $0$ and $y_\infty$, hence it must be one dimensional. Since $\Delta q=0$ outside $L$, this implies that
$q^{\rm even}$ is affine on each side of $L$, hence $q^{\rm even}\equiv 0$ (being $q^{\rm even}$ 3-homogeneous).

This proves that $q$ is odd with respect to $L$, so $q$ cannot have a singular Laplacian on $L$. Recalling that $\Delta q=0$ in $\R^2\setminus L$, this proves that $q$ is a $3$-homogeneous harmonic polynomial.
Finally, since $q \geq 0$ on $L$
and $q$ is $3$-homogeneous,
it must be $q|_L\equiv 0$.
%and that $\|q\|_{L^2(\partial B_1)}=1$, 
%one easily checks that the only two possible solutions are given by \eqref{q+-}.
\end{proof}

Thanks to the previous result and the Federer-type reduction argument developed in the previous section, we obtain the following:
\begin{lemma}
\label{lem:sigma h}
Let $n\geq 2$, $1\leq m \leq n-1$, and let 
$\Sigma_{m}^{3rd}$
denote the set of singular points $x_\circ \in \Sigma_{m}$ such that $\phi(0^+,u(x_\circ+\,\cdot\,)-p_{*,x_\circ})\geq 3$ and the following holds: for any sequence $r_k\to 0$ such that 
$$
w_{r_k} := \frac{u(x_\circ+r_k\,\cdot\,)-p_{*,x_\circ}(r_k \,\cdot\,)}{r_k^3} \rightharpoonup \hat q
\qquad \text{in $W^{1,2}(B_1)$,}
$$
$\hat q$ is a $3$-homogeneous harmonic polynomial vanishing on $\{p_{*,x_\circ}=0\}$ and satisfying $\|\hat q\|_{L^2(\partial B_1)}=H_3(0^+,u(x_\circ+\,\cdot\,)-p_{*,x_\circ})$.

Then:
\begin{enumerate}
\item[(i)] $\Sigma_{1}\setminus \Sigma_{1}^{3rd}$ consists of isolated points for $n\geq 2$;\\
\item[(ii)]${\rm dim}_\HH(\Sigma_{m}\setminus \Sigma_{m}^{3rd})\leq m-1$ for  $2 \leq m \leq n-1$.
\end{enumerate}
\end{lemma}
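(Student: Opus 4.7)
My plan is to combine the Federer-type dimension reduction of Lemmas \ref{lem12} and \ref{lem22} with a case analysis of the third-order blow-ups at points of frequency exactly $3$. First, any $x_\circ \in \Sigma_m$ with $\phi(0^+,u(x_\circ+\cdot)-p_{*,x_\circ})>3$ belongs automatically to $\Sigma_m^{3rd}$, since by \eqref{eq:decay w} we have $H_3(r,\cdot)\to0$, so any weak limit $\hat q$ of $w_{r_k}$ vanishes identically. Moreover, the anomalous sets $\Sigma_m^a\subset\{\phi<3\}$ already satisfy the required dimension bounds by Lemmas \ref{lem11}, \ref{lem12}, \ref{lem22} (using $n-3\leq m-1$ when $m=n-1\geq 2$). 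Hence it suffices to control the set of points where $\phi(0^+,\cdot)=3$ and some subsequential blow-up $q:=\lim\tilde w_{r_k}$ (provided by Proposition \ref{propblowup}) fails to be a $3$-homogeneous harmonic polynomial vanishing on $L:=\{p_{*,x_\circ}=0\}$. For $m\leq n-2$, Proposition \ref{propblowup}(a) gives that $q$ is automatically a $3$-hom harmonic polynomial, so the failure reduces to $q|_L\not\equiv 0$. For $m=n-1$, Proposition \ref{propblowup}(b) gives $q$ as a $3$-hom Signorini solution; decomposing $q=q^{\rm odd}+q^{\rm even}$ across $L$, the odd part is automatically a $3$-hom harmonic polynomial vanishing on $L$, and a short direct computation shows that $q^{\rm even}$ must be of the form $x_n^2(b\cdot x')$ (by $3$-homogeneity and evenness across $L$) and that harmonicity outside $L$ then forces $b=0$. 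Hence the failure for $m=n-1$ reduces to $q^{\rm even}|_L\not\equiv 0$.

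To prove (i), I treat $n=2$ and $n\geq3$ separately. For $n=2$, Lemma \ref{lemdim20} directly shows that whenever $0\in\Sigma_1$ is accumulated by singular points in $\Sigma_1$, every weak limit of $w_{r_k}$ is a $3$-hom harmonic polynomial vanishing on $L$ with the correct $L^2$ norm, so $0\in\Sigma_1^{3rd}$. For $n\geq3$, suppose by contradiction that $0\in\Sigma_1\setminus\Sigma_1^{3rd}$ is accumulated by singular points $x_k\to0$; by the discreteness of $\Sigma_1^a$ (Lemma \ref{lem11}) we may assume $\phi(0^+,\cdot)=3$. Setting $r_k:=2|x_k|$ and $y_k:=x_k/r_k\in\partial B_{1/2}$, extract subsequences so that $\tilde w_{r_k}\to q$ and $y_k\to y_\infty\in\partial B_{1/2}$. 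Since $m=1\leq n-2$, Lemma \ref{codge2} forces $y_\infty\in L$ and $q(y_\infty)=0$; as $L$ is one-dimensional, the cubic $q|_L$ could vanish at $y_\infty\neq 0$ only if it were identically zero, so $q|_L\equiv 0$. The almost-monotonicity of Lemma \ref{lem:mon 3} applied with this $q$ then delivers uniqueness of the third-order blow-up (setting $\hat q:=H_3(0^+,w)^{1/2}q$, the quantity $H_3(r,u-p_*-\hat q)$ is almost-monotone and tends to $0$ along $r_k$, hence tends to $0$ for all $r\to0^+$), so every weak limit of $w_r$ equals $\hat q$ and thus vanishes on $L$. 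This contradicts $0\notin\Sigma_1^{3rd}$ and completes (i).

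For (ii), I run Federer's dimension reduction. Assuming $\HH^\beta_\infty(\Sigma_m\setminus\Sigma_m^{3rd})>0$ for some $\beta>m-1$, and using the dimension bound on $\Sigma_m^a$, I may assume that the subset with $\phi=3$ and a bad blow-up has positive $\HH^\beta_\infty$. By Lemma \ref{abstract}(a), there is a density point $x_\circ=0$ and radii $r_k\downarrow0$, along which (up to subsequence) $\tilde w_{r_k}\rightharpoonup q$ with $q|_L\not\equiv 0$ (for $m\leq n-2$) or $q^{\rm even}|_L\not\equiv 0$ (for $m=n-1$). By Lemma \ref{abstract}(b), the accumulation set $\mathcal A$ satisfies $\HH^\beta_\infty(\mathcal A)>0$. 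For every $z\in\mathcal A$, Lemma \ref{codge2} (when $m\leq n-2$) or \eqref{eq:q even y infty} of Lemma \ref{cod1} (when $m=n-1$; note that at $\lambda_*=3$ only the even-part estimate is available, not the full \eqref{eq:q y infty}) forces $z\in L$ and $q(z)=0$ (respectively $q^{\rm even}(z)=0$). Hence $\mathcal A$ is contained in the zero set of a nontrivial polynomial on the $m$-dimensional subspace $L$, an algebraic variety of dimension at most $m-1$, contradicting $\HH^\beta_\infty(\mathcal A)>0$. The main obstacle is the clean treatment of $m=n-1$: one has to carefully disentangle the even and odd parts of the Signorini blow-up and verify that \eqref{eq:q even y infty} alone provides enough vanishing of $q^{\rm even}$ along $\mathcal A$ to run the algebraic argument.
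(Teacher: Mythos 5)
Your proposal follows the same overall route as the paper: dispose of the anomalous set via Lemmas~\ref{lem11}, \ref{lem12}, \ref{lem22}, handle frequency-exactly-$3$ points via the blow-up structure (Proposition~\ref{propblowup}, Lemmas~\ref{codge2}, \ref{cod1}), use Lemma~\ref{lem:mon 3} for uniqueness, and run the Federer-type reduction via Lemma~\ref{abstract}. The treatment of point (i) is essentially correct (and for $n\ge 3$, $m=1$ you make the role of Lemma~\ref{lem:mon 3} more explicit than the paper's terse sketch). However, for point (ii) there are two genuine gaps.

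First, your characterization of the ``bad'' blow-up when $m=n-1$ is wrong. You argue by a short computation that any even part $q^{\rm even}$ of a $3$-homogeneous Signorini blow-up must be a polynomial of the form $x_n^2(b\cdot x')$ and that harmonicity forces $b=0$, so that failure reduces to $q^{\rm even}|_L\not\equiv0$. But a $3$-homogeneous Signorini solution need not have a polynomial even part: for example $q^{\rm even}(x',z) = -|z|\,|x'|^2 + \tfrac{n-1}{3}|z|^3$ is $3$-homogeneous, even in $z$, satisfies \eqref{TOP}, vanishes identically on $L$, yet is not harmonic and not a polynomial. Hence a bad blow-up can have $q^{\rm even}|_L\equiv 0$, and the algebraic-variety step (``$\mathcal A$ contained in the zero set of a nontrivial polynomial on $L$'') does not apply: $q^{\rm even}|_L$ is not a polynomial and may vanish identically while $q^{\rm even}\not\equiv 0$. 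The correct argument (mimicking Lemma~\ref{lem22} and the proof of Lemma~\ref{lemdim20}) uses the full force of the frequency bound in \eqref{eq:q even y infty}: $q^{\rm even}$ has frequency $\ge 2+\alpha_\circ$ at each $z\in\mathcal A$, and, after invoking Theorem~\ref{FocSpa} to control the $z$ with frequency in $[2+\alpha_\circ,3)$, the remaining $z$ with frequency exactly $3$ produce extra translation invariance of $q^{\rm even}$; when $\mathcal A$ spans $L$ (as is forced by $\HH^\beta_\infty(\mathcal A)>0$ with $\beta>n-2$), this forces $q^{\rm even}\equiv 0$.

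Second, the logic of the contradiction in (ii) is inverted. You assert that along the density sequence $r_k$ produced by Lemma~\ref{abstract}(a) the blow-up $q$ satisfies $q|_L\not\equiv 0$ (resp.\ $q^{\rm even}|_L\not\equiv 0$), but the density sequence and the sequence exhibiting a bad blow-up are a priori unrelated, so this is unjustified. In fact the density argument shows exactly the \emph{opposite}: the accumulation set has $\HH^\beta_\infty(\mathcal A)>0$ with $\beta>m-1$, so $q$ (or $q^{\rm even}$) \emph{must} vanish on $L$ (otherwise $\mathcal A$ would sit in a low-dimensional variety). The contradiction is then obtained only after an additional step: having shown that the density-sequence blow-up is a $3$-homogeneous harmonic polynomial vanishing on $L$, Lemma~\ref{lem:mon 3} gives uniqueness of the third-order blow-up, so $x_\circ\in\Sigma_m^{3rd}$, contradicting $x_\circ$ lying in the bad set. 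Your write-up omits this final uniqueness step for (ii), which is precisely why the paper lists Lemma~\ref{lem:mon 3} as an ingredient.
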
 

\begin{proof}
Since the argument is similar to the 
ones used in the previous section, we just explain the main steps, leaving the details to the interested reader.

Point (i) for $n=2$ follows immediately from Lemma \ref{lemdim20} and Lemma \ref{lem:mon 3}. The case $n\geq 3$
for $m=1$ follows instead by 
Proposition \ref{propblowup}(a)
and Lemma \ref{codge2}.

Concerning the case $m= n-1\ge 2$,
recalling \eqref{eq:def anomalous} and Lemma \ref{abstract},
one can argue similarly to the proof of Lemma \ref{lem22} to prove that Lemma \ref{lemdim20} applies to all points in $\Sigma_{n-1}^g$ 
that are density points for $\Sigma_{n-1}^g$ with respect to the measure $H^\beta_\infty$, with $\beta>n-2$.  Thus, thanks to Lemma \ref{abstract}(a), we deduce that Lemma \ref{lemdim20}
applies to all points in $\Sigma_{n-1}^g$ up to at most a set of Hausdorff dimension $n-2$.
Since ${\rm dim}_\HH(\Sigma_{n-1}\setminus \Sigma_{n-1}^g)={\rm dim}_\HH(\Sigma_{n-1}^a) \leq n-3$ (see Theorem \ref{thm:main}), this proves (ii) when $m=n-1$.

Analogously, in the case $2 \leq m \leq n-2$,  using Proposition \ref{propblowup}(a) and arguing as in Lemma \ref{lem12},
we deduce that Lemma \ref{lemdim20}
applies to all points in $\Sigma_{m}^g$ up to at most a set of Hausdorff dimension $m-1$. Since ${\rm dim}_\HH(\Sigma_{m}\setminus \Sigma_{m}^g)={\rm dim}_\HH(\Sigma_{m}^a) \leq m-1$ (see Theorem \ref{thm:main}), this concludes the proof of (ii).
\end{proof}

We can now prove the uniqueness and continuity of third-order blow-ups at all points in $\Sigma_m^{3rd}$:
\begin{proposition}
\label{prop:C2}
Let $n\geq 2$, $1\leq m \leq n-1$,
and let $x_\circ \in \Sigma_m^{3rd}$. Then the following limit exists: 
\begin{equation}
\label{eq:3rd limit}
\frac{u(x_\circ+rx)-p_{*,x_\circ}(rx)}{r^3}\rightharpoonup q_{*,x_\circ}(x)\qquad \text{in $W^{1,2}(B_1)$ as $r\to 0$,} 
\end{equation}
where $q_{*,x_\circ}(x)$ is a 3-homogeneous harmonic polynomial vanishing on $\{p_{*,x_\circ}=0\}$ and satisfying $\|q_{*,x_\circ}\|_{L^2(\partial B_1)}=H_3(0^+,u(x_\circ+\,\cdot\,)-p_{*,x_\circ})$.
In addition the above convergence is uniform on compact sets, and the
$$
\text{the map }\Sigma_m^{3rd}\ni x_\circ \mapsto q_{*,x_\circ}\text{ is continuous.}
$$
\end{proposition}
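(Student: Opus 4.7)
The plan is to establish uniqueness of the third-order blow-up via a Monneau-type argument based on the almost-monotonicity formula of Lemma~\ref{lem:mon 3}, and then to upgrade the resulting $L^2(\partial B_1)$-uniqueness to weak $W^{1,2}$-convergence, local uniform convergence, and continuity in the base point.

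Without loss of generality, take $x_\circ = 0$, write $p_* := p_{*,0}$, $L := \{p_* = 0\}$, and $w_r(x) := r^{-3}(u-p_*)(rx)$. Since $\lambda_* := \phi(0^+, u-p_*) \ge 3$ by the definition of $\Sigma_m^{3rd}$, the family $\{w_r\}_{r\in(0,1/2)}$ is uniformly bounded in $W^{1,2}(B_1)$: the $L^2(\partial B_1)$-bound follows from \eqref{eq:decay w} (with $\lambda_* \ge 3$), and the Dirichlet-energy bound follows from the Almgren formula of Proposition~\ref{lemAlmgren} applied to $u-p_*$, as in Step~1 of the proof of Proposition~\ref{propblowup}. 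Let $\hat q$ be any subsequential weak $W^{1,2}$-limit along some $r_k \downarrow 0$. By the defining property of $\Sigma_m^{3rd}$, $\hat q$ is a $3$-homogeneous harmonic polynomial vanishing on $L$, so $\hat q^{\,2}/p_*$ is (the restriction to $B_1$ of) a $4$-homogeneous polynomial and in particular has finite $L^\infty$-norm. Setting $v := u - p_* - \hat q$, Lemma~\ref{lem:mon 3} produces $C > 0$ such that $r \mapsto H_3(r,v) + Cr$ is nondecreasing on $(0, 1/2)$.

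Along $r_k \downarrow 0$ one has $H_3(r_k, v) = \|w_{r_k} - \hat q\|^2_{L^2(\partial B_1)} \to 0$, which, combined with the monotonicity, forces $\lim_{r \downarrow 0} H_3(r, v) = 0$. If $\hat q'$ is any other subsequential limit, then the triangle inequality on $L^2(\partial B_1)$ together with the analogous conclusion applied to $u - p_* - \hat q'$ yields $\|\hat q - \hat q'\|_{L^2(\partial B_1)} = 0$, so $\hat q = \hat q'$; denote this common value by $q_{*,0}$. Uniqueness of subsequential weak $W^{1,2}$-limits (promoted from $L^2(\partial B_1)$-uniqueness by trace compactness) together with the $W^{1,2}$-boundedness of $\{w_r\}$ yields $w_r \rightharpoonup q_{*,0}$ in $W^{1,2}(B_1)$. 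For uniform convergence on compact subsets of $B_1$, one uses that $\Delta w_r = -r^{-1}\chi_{\{u(r\,\cdot\,)=0\}} \le 0$ has $\|\Delta w_r\|_{L^1(B_1)} \le C r^{\lambda_*-3} \le C$ by Proposition~\ref{decayest}; integrating the monotonicity bound $H_3(\rho, v) \le H_3(r, v) + Cr$ for $\rho \in (0, r)$ upgrades the $L^2(\partial B_1)$-convergence to strong $L^2(B_1)$-convergence of $w_r - q_{*,0}$, and combining this with the superharmonicity of $w_r$ and interior elliptic estimates (using that $w_r$ is harmonic on the complement of the contact set, which collapses to $L$) yields local equicontinuity of $\{w_r\}$; Arzel\`a--Ascoli then delivers uniform convergence on compacts.

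Finally, for continuity of $\Sigma_m^{3rd} \ni x_\circ \mapsto q_{*,x_\circ}$, let $x_\circ^{(k)} \to x_\circ$ in $\Sigma_m^{3rd}$. All constants entering Lemma~\ref{lem:mon 3}, Proposition~\ref{decayest}, and the $W^{1,2}$-bounds above may be chosen uniformly for base points in a small neighborhood of $x_\circ$. Hence $\{q_{*,x_\circ^{(k)}}\}$ is bounded in the finite-dimensional space of $3$-homogeneous harmonic polynomials; any subsequential limit is itself a $3$-homogeneous harmonic polynomial vanishing on $\{p_{*,x_\circ}=0\}$, and by the uniqueness already proved it must coincide with $q_{*,x_\circ}$, which forces convergence of the whole sequence. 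The principal technical obstacle is precisely this uniformity of constants across base points, most delicately a uniform bound on $\|q_{*,x_\circ}^{\,2}/p_{*,x_\circ}\|_{L^\infty}$ as $x_\circ$ varies (which follows from the polynomial structure of the blow-ups together with the quantitative estimates established in the preceding sections).
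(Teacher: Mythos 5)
Your uniqueness argument is correct and is essentially the paper's own: apply Lemma~\ref{lem:mon 3} to $v=u-p_*-\hat q$, conclude $H_3(r,v)+Cr$ is nondecreasing, deduce $H_3(r,v)\to 0$, and compare two subsequential limits via the triangle inequality on $L^2(\partial B_1)$. The passage from $L^2(\partial B_1)$-uniqueness to weak $W^{1,2}(B_1)$-convergence (using trace compactness and the $W^{1,2}$-bound) is also sound.

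However, your proof of continuity of $x_\circ\mapsto q_{*,x_\circ}$ has a genuine gap. You observe that $\{q_{*,x_\circ^{(k)}}\}$ is bounded in the finite-dimensional space of $3$-homogeneous harmonic polynomials and that any subsequential limit vanishes on $\{p_{*,x_\circ}=0\}$, and then assert that \emph{``by the uniqueness already proved it must coincide with $q_{*,x_\circ}$''}. This does not follow: the uniqueness you established is uniqueness of the blow-up at a single, fixed base point, which by itself places no constraint on the limit of blow-ups taken at nearby points. There are many $3$-homogeneous harmonic polynomials vanishing on $\{p_{*,x_\circ}=0\}$, and nothing in your argument singles out $q_{*,x_\circ}$ among them. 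What is actually needed --- and what the paper supplies --- is a quantitative transfer from a fixed small scale to scale $0^+$, \emph{uniformly in $k$}. Concretely, the paper fixes $\varepsilon>0$, chooses $r_\varepsilon$ so that the scale-$r_\varepsilon$ deviation from $q_{*,x_\circ}$ at $x_\circ$ is at most $\varepsilon$, picks rotations $R_k$ taking $\{p_{*,x_k}=0\}$ onto $\{p_{*,x_\circ}=0\}$, and applies Lemma~\ref{lem:mon 3} \emph{at each $x_k$ with the fixed competitor $q_{*,x_\circ}\circ R_k$} (which is admissible because it vanishes on $\{p_{*,x_k}=0\}$). The almost-monotonicity then bounds the scale-$0^+$ deviation at $x_k$ by the scale-$r_\varepsilon$ deviation plus $Cr_\varepsilon$, and letting $k\to\infty$, then $\varepsilon\to 0$, gives continuity. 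The ``uniformity of constants'' you flag as the main obstacle is a real but secondary concern (it follows from $p_{*,x_k}\circ R_k^{-1}\geq p_{*,x_\circ}/2$ for $k$ large); the missing idea is the fixed-competitor Monneau comparison across nearby base points.

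A smaller remark: your argument for ``uniform on compact sets'' (via $L^1$-bounds on $\Delta w_r$, Calder\'on--Zygmund, and equicontinuity) interprets the claim as uniform convergence in $x\in K\Subset B_1$ for fixed $x_\circ$. The paper instead reads it as locally uniform convergence with respect to the base point $x_\circ\in\Sigma_m^{3rd}$, which it deduces from the same fixed-scale almost-monotonicity argument and which is what is actually invoked in the proof of Theorem~\ref{thm:C2}. As written, your sketch of equicontinuity is also incomplete for $m\le n-2$, where the uniform Lipschitz bound \eqref{eq:wk Lip} is not available.
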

\begin{proof}
Assume $0 \in \Sigma_m^{3rd}$. We first prove the existence of a limit.

Let $q_1$ and $q_2$ be two different limits obtained along two sequences $r_{k,1}$ and $r_{k,2}$ both converging to zero. Up to taking a subsequence of $r_{k,2}$ and relabeling the indices, we can assume that $r_{k,2}\leq r_{k,1}$ for all $k$.
Thus, thanks to Lemma \ref{lem:mon 3}, we have
$$H_3(r_{k,1},w-q_1)\geq H_3(r_{k,2},w-q_1)-C|r_{k,2}-r_{k,1}|\qquad \forall\,k,
$$
for some constant $C$ depending on $q_1$.
Thus, letting $k\to \infty$ we obtain
$$
0=\lim_{k\to \infty} \int_{B_1}(w_{r_{k,1}}-q_1)^2\geq \lim_{k\to \infty} \biggl(\int_{B_1}(w_{r_{k,2}}-q_1)^2-C|r_{k,2}-r_{k,1}|\biggr) =\int_{B_1}(q_2-q_1)^2.
$$
This proves the uniqueness of the limit.

We now prove the continuity of the map $x_\circ \mapsto q_{*,x_\circ}$ at $0 \in \Sigma_m^{3rd}$.
Fix $\ep>0$, and consider a sequence $x_k\in \Sigma_m^{3rd}$ with $x_k\to 0$.
Thanks to \eqref{eq:3rd limit}, there exists a small radius $r_\ep>0$ such that
\begin{equation}
\label{eq:eps}
\int_{\partial B_1}\biggl|\frac{u(r_\ep x)-p_{*,0}(r_\ep x)}{r_\ep^3}- q_{*,0}(x)\biggr|^2 \leq \ep.
\end{equation}
Now, let $R_k:\R^n\to \R^n$ be a rotation that maps the $m$-dimensional plane $L_k:=\{p_{*,x_k}=0\}$ onto $L_0:=\{p_{*,0}=0\}$,
and note that $R_k\to {\rm Id}$ as $k \to \infty$ (this follows by the continuity of $\Sigma_m\ni x\mapsto p_{*,x}$).
Then, since $q_{*,0}\circ R_k$ vanishes on $L_k$, we can apply Lemma \ref{lem:mon 3} at $x_k$ with $q=q_{*,0}\circ R_k$ to deduce that
\begin{align*}
\int_{\partial B_1}|q_{x_k,*} - q_{*,0}\circ R_k|^2 &=\lim_{r\to 0}
 \int_{\partial B_1}\biggl|\frac{u(x_k+r x)-p_{*,x_k}(r x)}{r^3} - q_{*,0}\circ R_k(x)\bigg|^2\\
 & \leq 
  \int_{\partial B_1}\biggl|\frac{u(x_k+r_\ep x)-p_{*,x_k}(r_\ep x)}{r_\ep^3} - q_{*,0}\circ R_k(x)\bigg|^2 +Cr_\ep.
\end{align*}
Note that the constant $C$ above is independent of $k$ since, by the continuity of $p_{*,x_k}$,
$p_{*,x_k}\circ R_k^{-1} \geq p_{*,0}/2$ for $k$ large enough, therefore
$$\bigg\|\frac{(q_{*,0}\circ R_k)^2}{p_{*,x_k}}\bigg\|_{L^\infty(B_1)}\leq 2\bigg\|\frac{q_{*,0}^2}{p_{*,0}}\bigg\|_{L^\infty(B_1)}\qquad \forall\,k\gg 1.$$
Hence, since $R_k\to {\rm Id}$, letting $k \to \infty$ and recalling \eqref{eq:eps} we obtain
\begin{align*}
\limsup_{k\to \infty}\int_{\partial B_1}
|q_{x_k,*} - q_{*,0}|^2&\leq \lim_{k\to \infty} \int_{\partial B_1}\biggl|\frac{u(x_k+r_\ep x)-p_{*,x_k}(r_\ep x)}{r_\ep^3} - q_{*,0}\circ R_k(x)\bigg|^2+Cr_\ep\\
& \le	 \ep+Cr_\ep.
\end{align*}
Since $\ep>0$ is arbitrary, this proves the continuity at $0$.
In addition, arguing as above (using Lemma \ref{lem:mon 3}) one sees that the convergence in \eqref{eq:3rd limit} is locally uniform with respect to $x_\circ$.
\end{proof}

\begin{remark}
It is important to observe that the above proof shows something stronger:
if $x_k\in \Sigma_m^g$ (so their frequency is at least $3$, see \eqref{eq:def anomalous}) and $x_k \to x_\circ$ with $x_\circ \in \Sigma_m^{3rd}$, then
\begin{equation}
\label{eq:cont k}
\lim_{k\to \infty}\int_{\partial B_1}
|q_{x_k} - q_{*,0}|^2=0
\end{equation}
whenever $q_{x_k}$ is an arbitrary limit point 
of $r^{-3}\left(u(x_k+r x)-p_{*,x_k}(r x)\right)$ as $r \to 0$. In other words, even if the third order blow-up of $u-p_{*,x_k}$ at $x_k$ may not be unique, any such limit has to converge to $q_{*,0}$ as $x_k\to x_\circ$.
 Indeed, if $\{r_{k,j}\}_{j\geq 1}$ is a sequence converging to $0$ such that
$$
q_{x_k}(x)=\lim_{j\to \infty}\frac{u(x_k+r_{k,j} x)-p_{*,x_k}(r_{k,j} x)}{r_{k,j}^3},
$$
then Lemma \ref{lem:mon 3} applied at $x_k$ with $q=q_{*,0}\circ R_k$ yields (since $r_{k,j}\leq r_\ep$ for $j\gg 1$)
\begin{align*}
\int_{\partial B_1}|q_{x_k} - q_{*,0}\circ R_k|^2 &=\lim_{j \to \infty}
 \int_{\partial B_1}\biggl|\frac{u(x_k+r_{k,j} x)-p_{*,x_k}(r_{k,j} x)}{r_{k,j}^3} - q_{*,0}\circ R_k(x)\bigg|^2\\
 & \leq 
  \int_{\partial B_1}\biggl|\frac{u(x_k+r_\ep x)-p_{*,x_k}(r_\ep x)}{r_\ep^3} - q_{*,0}\circ R_k(x)\bigg|^2 +Cr_\ep,
\end{align*}
and the result follows as in the proof of Proposition \ref{prop:C2}.

Note also that, when $n=2$, $q_{*,0}$ is odd with respect to the line $\{p_{*,0}=0\}$ (see the proof of Lemma \ref{lemdim20}).
Hence it follows by \eqref{eq:cont k}
and the continuity of $p_{*,x_k}$
that
\begin{equation}
\label{eq:cont k2}
\lim_{k\to \infty}\int_{\partial B_1}
|q_{x_k}^{\rm odd} - q_{*,0}|^2+\int_{\partial B_1}
|q_{x_k}^{\rm even}|^2=0,
\end{equation}
where $q_{x_k}^{\rm odd}$ (resp. $q_{x_k}^{\rm even}$)  is the odd (resp. even) part of $q_{x_k}$ with respect to $\{p_{*,x_k}=0\}$. In particular, as in Lemma \ref{lemdim20}, $q_{x_k}^{\rm odd}$ is a 3-homogeneous harmonic polynomial.
\end{remark}

As consequence of the previous results, we obtain the following result about the structure of $\Sigma_{n-1}$.
\begin{theorem}
\label{thm:C2}
The following holds:
\begin{enumerate}
\item[($n=2$)] $\Sigma_1$ is locally  contained in a $C^{2}$ curve.
\item[($n\geq 3$)] For any $m=1,\ldots,n-1$, the set $\Sigma_{m}$ can be covered by a countable family of $C^2$ $m$-dimensional manifolds, except for at most  a set of Hausdorff dimension $m-1$.
\end{enumerate}
\end{theorem}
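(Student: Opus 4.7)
The plan is to apply the $o(\cdot)$-version of Whitney's extension theorem at order $3$ to the cubic jets
$$P_{x_\circ}(y) := p_{*,x_\circ}(y - x_\circ) + q_{*,x_\circ}(y - x_\circ), \qquad x_\circ \in \Sigma_m^{3rd},$$
thereby producing a $C^3$ function $F$ whose vanishing gradient will cut out the desired $C^2$ manifold via the Implicit Function Theorem. By Lemma \ref{lem:sigma h}, the set $\Sigma_m \setminus \Sigma_m^{3rd}$ is discrete (for $m=1$) or has Hausdorff dimension at most $m-1$ (for $m\geq 2$), so it suffices to cover $\Sigma_m^{3rd}$ by countably many $C^2$ $m$-dimensional manifolds; for $n=2$, the isolated points of $\Sigma_1\setminus\Sigma_1^{3rd}$ can trivially be absorbed into any smooth curve passing through them, yielding the stronger ``locally contained in a $C^2$ curve'' statement.

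The crux is to verify the compatibility conditions
$$|D^k P_{x_\circ}(x) - D^k P_x(x)| = o(|x-x_\circ|^{3-k}), \qquad k = 0,1,2,3,$$
uniformly in $x_\circ, x$ on compact subsets of $\Sigma_m^{3rd}$. Setting $v_{x_\circ}(y) := u(x_\circ+y) - p_{*,x_\circ}(y) - q_{*,x_\circ}(y)$, the locally uniform convergence from Proposition \ref{prop:C2} gives $\|v_{x_\circ}\|_{L^2(B_r)} = \epsilon(r)\, r^{3+n/2}$ with $\epsilon(r)\to 0$ uniformly in $x_\circ$ on compact sets. Writing $u$ via both Taylor expansions yields the identity
$$P_{x_\circ} - P_x = v_x(\cdot - x) - v_{x_\circ}(\cdot - x_\circ),$$
so $\|P_{x_\circ}-P_x\|_{L^2(B_\rho(c))} = o(\rho^{3+n/2})$, where $\rho := |x-x_\circ|$ and $c := \frac{x_\circ+x}{2}$. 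As $P_{x_\circ}-P_x$ is a polynomial of degree at most $3$, norm equivalence on that finite-dimensional space (combined with rescaling to $B_1$) gives $|D^k(P_{x_\circ}-P_x)(c)| = o(\rho^{3-k})$, and a Taylor expansion around $c$ transfers these estimates to the evaluations at $x$ and $x_\circ$. The continuity of the jet components on $\Sigma_m^{3rd}$ follows because the constant and linear parts vanish identically, $D^2 p_{*,x_\circ}$ is continuous in $x_\circ$ (a classical consequence of Monneau's monotonicity, Corollary \ref{monneau}), and $D^3 q_{*,x_\circ}$ is continuous by Proposition \ref{prop:C2}.

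The classical $C^3$ Whitney extension theorem (see e.g.\ \cite{Fef09}) then yields $F \in C^3(\R^n)$ with matching jets on the chosen compact set, and since $D^2 F(x_\circ) = D^2 p_{*,x_\circ}$ has kernel $L_{x_\circ}$ of dimension exactly $m$ with nonzero eigenvalues uniformly bounded below on compact subsets of $\Sigma_m^{3rd}$, the Implicit Function Theorem applied to the $C^2$ map $\nabla F$ in suitable coordinates produces a $C^2$ $m$-dimensional manifold locally containing $\Sigma_m^{3rd}$. Taking a countable cover then completes the argument. The main technical difficulty will be ensuring uniformity of the $o(\cdot)$ moduli in $x_\circ$ on compact sets; this rests on the locally uniform convergence in Proposition \ref{prop:C2} together with a uniform bound for $\|q_{*,x_\circ}^2/p_{*,x_\circ}\|_{L^\infty(B_1)}$, which comes from the continuity of $q_{*,x_\circ}$ and the non-degeneracy of $p_{*,x_\circ}|_{L_{x_\circ}^\perp}$ on compacts, making Lemma \ref{lem:mon 3} applicable with uniform constants.
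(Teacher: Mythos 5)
Your overall strategy matches the paper's: verify $o(\cdot)$-order-three Whitney compatibility of the jets $P_{x_\circ}(y)=p_{*,x_\circ}(y-x_\circ)+q_{*,x_\circ}(y-x_\circ)$ using the locally uniform convergence from Proposition \ref{prop:C2} (itself powered by Lemma \ref{lem:mon 3}), apply Whitney to get $F\in C^3$, and pass to a manifold via the Implicit Function Theorem. The identity $P_{x_\circ}-P_x = v_x(\cdot-x)-v_{x_\circ}(\cdot-x_\circ)$ together with the norm-equivalence-on-polynomials argument is a clean rephrasing of the triangle inequality computation in \eqref{triangle!}, and your remark about uniformly bounding $\|q_{*,x_\circ}^2/p_{*,x_\circ}\|_{L^\infty}$ on compacts is exactly the quantity that makes Lemma \ref{lem:mon 3} usable with a uniform constant. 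So the core of the proof is the right one. However, there are two genuine gaps.

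The more serious one is in the $n=2$ case. You claim the isolated points of $\Sigma_1\setminus\Sigma_1^{3rd}$ can ``trivially be absorbed into any smooth curve passing through them.'' This is fine for the local statement \emph{at} such an isolated point $y$ (where $\Sigma_1\cap U=\{y\}$ for small $U$), but it does not help at a point $x_\circ\in\Sigma_1^{3rd}$ to which a sequence $y_k\in\Sigma_1\setminus\Sigma_1^{3rd}$ accumulates. Lemma \ref{lem:sigma h}(i) only says each such $y_k$ is isolated \emph{in} $\Sigma_1$; it does not prevent $y_k\to x_\circ$. If you apply Whitney only to $\Sigma_1^{3rd}\cap\overline{B_R}$, the resulting curve has no reason to pass through the $y_k$, and the claim ``$\Sigma_1$ is locally contained in a $C^2$ curve'' at $x_\circ$ fails. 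The paper instead assigns a (possibly non-unique) third-order jet $q_{x_\circ}$ to \emph{every} point of $\Sigma_1$, shows via the remark after Proposition \ref{prop:C2} (display \eqref{eq:cont k}) that any such jet at $x_k$ must converge to $q_{*,x_\circ}$ as $x_k\to x_\circ\in\Sigma_1^{3rd}$, and then proves uniform continuity of the Whitney modulus $\mathcal G$ on all of $\Sigma_1\cap\overline{B_R}$, splitting into the two regimes near/far from $\Sigma_1^{3rd}$. That extra step (extending the continuity estimate across the isolated points) is what is missing in your proof, and it does require a nontrivial argument.

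The second gap concerns $m\ge 2$: you say ``taking a countable cover then completes the argument,'' but it is not automatic that $\Sigma_m^{3rd}$ can be written as a countable union of compact subsets of itself (it need not be $\sigma$-compact). What one actually needs, and what the paper carefully constructs, is a sequence of \emph{closed} sets $K_j\subset\Sigma_m^{3rd}$ such that $\bigcup_j K_j$ covers $\Sigma_m$ up to a set of Hausdorff dimension $\le m-1$. This is done by choosing open covers $\mathcal O_j\supset\Sigma_m\setminus\Sigma_m^{3rd}$ of small $(m-1+1/j)$-premeasure, removing a small neighborhood $\mathcal U_j$ of $\overline{\Sigma_m}\setminus\Sigma_m$ to ensure closedness, and setting $K_j:=\Sigma_m\setminus(\mathcal O_j\cup\mathcal U_j)$. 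This construction, and the verification that $K_j$ is closed and contained in $\Sigma_m^{3rd}$, is a necessary ingredient that your argument glosses over.
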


\begin{proof}
We start with the case $n=2$.
Let us consider the map
\begin{equation}
\label{eq:cont q odd}
\Sigma_1\ni x_\circ \mapsto q_{x_\circ}^{\rm odd},
\end{equation}
where $q_{x_\circ}^{\rm odd}$ is the odd part of $q_{x_\circ}$ with respect to $\{p_{*,x_\circ}=0\}$, and:\\
- if $x_\circ \in \Sigma_1^{3rd}$, then $q_{x_\circ}=q_{*,x_\circ}$ is the third order limit provided by Proposition \ref{prop:C2};\\
- if $x_\circ \in \Sigma_1\setminus\Sigma_1^{3rd}$, then $q_{x_\circ}$ is an arbitrary limit point 
of $r^{-3}\left(u(x_\circ+r x)-p_{*,x_\circ}(r x)\right)$ as $r\to 0$ (recall that $\Sigma_1=\Sigma_1^g$ for $n=2$, see Lemma \ref{possiblefreq}).

Fix $R\in (0,1)$, and 
for $(r,x_\circ) \in (0,1-R] \times (\Sigma^1\cap \overline B_R) $, let us define the function
\[
\mathcal F(r,x_\circ) :=  
r^{-3} \biggl(\ave_{B_r} \left(u(x_\circ+\cdot) -p_{*,x_\circ} - q_{x_\circ}^{\rm odd} \right)^2 \biggr)^{1/2 }.
\]
Note that, as a consequence of Lemma \ref{lem:mon 3}, the map $r\mapsto \mathcal F(r,x_\circ)$ is almost monotone, hence the limit as $r\to 0^+$ exists. Also, for $r>0$ fixed, the map $$\Sigma^1\cap \overline B_R\ni x_\circ\mapsto \mathcal F(r,x_\circ)$$ is continuous as a consequence of \eqref{eq:cont k} and \eqref{eq:cont k2} (recall that the set $\Sigma_1\setminus \Sigma_1^{3rd}$ consists of isolated points by Lemma \ref{lem:sigma h}(i), so $\mathcal F(r,\cdot)$
is trivially continuous at such points).
Thus, as in Lemma \ref{lem:laststep},
the almost monotonicity implies that $x_\circ\mapsto \mathcal F(0^+,x_\circ)$ is upper semicontinuous. Since $\mathcal F(0^+,\cdot)=0$
on $\Sigma_1^{3rd}$ (by Proposition \ref{prop:C2}) we deduce that, for any $\ep>0$, there exists $r_\ep>0$ such that
\begin{equation}
\label{eq:cont F}
\mathcal F(r,x_\circ)\leq \ep \qquad \forall\,x_\circ \in \Sigma_1 \cap \overline B_R
\quad \text{s.t.}\quad {\rm dist}(x_\circ,\Sigma_1^{3rd})\leq r_\ep,\qquad \forall\,r\in (0,r_\ep].
\end{equation}
Now, to any point $x_\circ\in \Sigma_1$ we associate the third order polynomial 
\[P_{x_\circ} (x) := p_{*,x_\circ}(x-x_\circ)+q_{x_\circ}^{\rm odd}(x-x_\circ),\]
and we consider the function $\mathcal  G:\Sigma_1 \times \Sigma_1\to \R$ defined as 
$$
\mathcal  G(x_\circ, x):= \frac{1}{\rho_{x_\circ, x}^3}\big\| (P_{x_\circ} -P_x)(\rho_{x_\circ, x}\,\cdot\, )\big\|_{L^2(B_1)} ,\qquad \rho_{x_\circ, x}:=|x-x_\circ|.
$$
We want to prove that $\mathcal G$ is uniformly continuous on $(\Sigma_1 \cap \overline B_R)\times (\Sigma_1 \cap \overline B_R)$ for any $R\in (0,1)$.

Observe that, thanks to Lemma \ref{lem:sigma h}(i), the set
$$
O_{r,R}:=\left\{x_\circ \in \Sigma_1 \cap \overline B_R\,:\,{\rm dist}(x_\circ,\Sigma_1^{3rd})\geq r\right\}
$$
is finite for any $r>0$. In particular, if we define
$$
U_{r,R}:=\left\{x_\circ \in \Sigma_1 \cap \overline B_R\,:{\rm dist}(x_\circ,\Sigma_1^{3rd})\leq r\right\},
$$
then for any $\ep>0$
there exists $\delta=\delta(\ep)>0$ small enough such that
$$
{\rm dist}(x_1,x_2)> \delta\qquad \forall\,
(x_1,x_2) \in (O_{\ep,r_\ep/2}\times O_{\ep,r_\ep/2})\cup (O_{\ep,r_\ep}\times U_{\ep,r_\ep/2})
$$
(here $r_\ep>0$ is as in \eqref{eq:cont F}). 
Hence it is enough to check the continuity of $\mathcal G$ on $U_{\ep,r_\ep}\times U_{\ep,r_\ep}$.

Note that, arguing exactly as in \eqref{triangle!}, it follows that
\[
\mathcal  G(x_\circ, x) \le \mathcal F(\rho_{x_\circ, x}, x_\circ) + \mathcal F(2\rho_{x_\circ, x}, x)\qquad \forall \,x_\circ, x \in \Sigma_1. 
\]
In particular, provided $\rho_{x_\circ, x}=|x-x_\circ|\leq r_\ep/2$, then it follows by \eqref{eq:cont F} that
$\mathcal  G(x_\circ, x) \leq 2\ep$
whenever $(x_\circ, x) \in U_{\ep,r_\ep}\times U_{\ep,r_\ep}$, which proves the desired uniform continuity of $\mathcal G$.

Since the norm $\|\cdot\|_{L^2(B_1)}$ is equivalent to the norm $\|\cdot\|_{C^3(B_1)}$  on the space of third order polynomials, 
the uniform continuity of $\mathcal G$
implies that the the polynomials $P_{x_\circ}$ are  continuous in the sense of Whitney's Theorem: for any $R \in (0,1)$ there exists a modulus of continuity $\omega_R$ such that
$$
| D^kP_{x_\circ} (x) -D^k P_x(x) | \le  \omega_{R}(|x-x_\circ|)|x-x_\circ|^{3-k} \qquad \forall\,x_\circ,x \in \Sigma_1 \cap \overline B_R,\,k=0,1,2,3.
$$
Since $\Sigma_1$ is closed, the set $\Sigma_1 \cap \overline B_R$ is compact, so
this allows us to apply the classical Whitney's Theorem to find a map $F \in C^3(\R^2)$ such that
$$
F(x)=p_{*,x_\circ}(x-x_\circ)+q_{*,x_\circ}(x-x_\circ)+o(|x-x_\circ|^3) \qquad \forall\, x_\circ \in \Sigma_1^{3rd}\cap \overline{B_R},
$$
and we conclude by the Implicit Function Theorem (see the proof of Lemma \ref{lem:laststep}).

\smallskip

Concerning the higher dimensional case,
since ${\rm dim}_\HH(\Sigma_{m} \setminus \Sigma_{m}^{3rd})\leq m-1$
(see Lemma \ref{lem:sigma h}),
for any $j \in \mathbb N$ we can find a countable family of balls $\{\hat B_i\}$ such that
$$
\Sigma_{m} \setminus \Sigma_{m}^{3rd}
\subset \bigcup_i \hat B_i=:\mathcal O_j,
\quad \text{and}\quad \sum_i{\rm diam}(\hat B_i)^{m-1+1/j}<\frac1j.
$$
In particular $\mathcal H^{m-1+1/j}_\infty(\mathcal O_j)<1/j$ (see \eqref{eq:def Haus}). Note that, because $\Sigma_{m}$ is relatively open in $\Sigma$  (by the continuity of $x_\circ\mapsto p_{*,x_\circ}$), the set $\overline {\Sigma_m}\setminus \Sigma_m$ closed. Define the sets  
$$
\mathcal U_j := \{x \, :\, {\rm dist}(x,\overline{\Sigma_m}\setminus\Sigma_m)<1/j\},\qquad K_j:=\Sigma_{m}\setminus (\mathcal O_j\cup \mathcal U_j).
$$
Since $\mathcal O_j$ is open, the set $K_j$ is closed. 
Noticing that the polynomials $P_{x_\circ} (x) := p_{*,x_\circ}(x-x_\circ)+q_{*,x_\circ}(x-x_\circ)$
are continuous with respect to $x_\circ \in K_j$ (by Proposition \ref{prop:C2}), we can argue as we did above in case $n=2$ to conclude that $K_j$ can be locally covered by a $m$-dimensional manifold of class $C^2$.
Then the result follows by observing that
$\cup_jK_j=\Sigma_m\setminus \left(\cap_j \mathcal O_j)\right)$ and 
 $\HH^{\beta}_\infty(\cap_j \mathcal O_j)=0$ for any $\beta>m-1$, hence
${\rm dim}_\HH(\cap_j \mathcal O_j)\leq m-1 $ (see \eqref{eq:def dim}).
\end{proof}

\appendix
\section{Examples of $(m-1)$-dimensional anomalous sets $\Sigma^a_m$}

In this Appendix, for any $1\leq m \leq n-2$, we construct an example of solution to the obstacle problem in $\R^n$ 
for which the anomalous set $\Sigma_m^a$
is $(m-1)$-dimensional. The existence of such examples shows that the assertion  ${\rm dim}_\HH(\Sigma^a_m) \le m-1$  in  Theorem \ref{thm:main}(b) is optimal.

These solutions are constructed as follows:
given $1\leq m \leq n-2$ we consider  functions 
\begin{equation}\label{formu}
u(x_1,x_2,  \dots, x_n) = u^\star(z,r), \qquad    z:= x_{m},  \quad  r:= \sqrt{x_{m+1}^2 +x_{m+2}^2+\cdots+ x_{n}^2},
\end{equation}
which are independent of the first $(m-1)$ variables and are axially symmetric with respect to the last $m-n$ variables. Then our goal is to find solutions to the obstacle problem which are of the form \eqref{formu} and for which all points in the $(m-1)$ dimensional affine space 
\[\mathcal Z := \{x_m= x_{m+1} =\cdots=x_n=0 \}\]
 are anomalous points in  $\Sigma_m^a$.
To build these examples, we rely on some ideas introduced in \cite{Y16}.

Fix $\phi:[-1,1] \rightarrow \R$ a nonnegative $C^2$ function satisfying 
\[
\phi(0)>0,\quad \phi(1)=0,
\quad \phi'(z)\le 0 \quad \forall\,z\in(0,1),\quad \text{and} \quad \phi(-z)=\phi(z).
\]
Then, for any real number $k>0$ we consider the solution $u^k\ge 0$ to the obstacle problem in $(-1,1)\times(0,1)\subset \R^2$
\[
\begin{cases}
{\rm div}( r^{n-m-1}\nabla u^k) = k\,r^{n-m-1}\,\chi_{\{u^k>0\}}   \quad&\mbox{in } (-1,1)\times(0,1),
\\
u^k(\pm1,r) =0&  r\in (0,1),
\\ 
u^k(z,1) = \phi(z) &  z\in (-1,1).
\end{cases}
\] 
In other words, $u^k(x_{m}, \sqrt{x_{m+1}^2+\cdots+ x_{n}^2})$ is a solution of the classical obstacle problem in the cylinder $\R^{m-1} \times (-1,1)\times B_1^{(n-m)}$
satisfying the symmetries in \eqref{formu}.

Note that, when we think of this obstacle problem as a two dimensional problem (in the variables $z,r$), we do not prescribe ``boundary'' values at $r=0$ since this line has zero capacity for the operator ${\rm div}( r^{n-m-1}\nabla \,\cdot\,)$ when $n-m-1\ge1$ (this is equivalent to saying that the set $\{x_{m+1} =\cdots=x_n=0 \}$ has zero harmonic capacity in $\R^n$).

We now claim that:
\begin{enumerate}
\item[(i)]  $u^k$ is even $z$, namely $u(z,r)=u(-z,r)$;
\item[(ii)] $\partial_z u \le 0$ in $(0,1)\times(0,1)$;
\item[(iii)] the set $\{u^k>0\}$ is convex in the direction $z$, and is symmetric with respect to $z=0$.
\end{enumerate}
 Indeed, (i) follows from the symmetry of the boundary data (and the uniqueness of solution to the obstacle problem).
 
 To show (ii) we consider the open set $U := (0,1)\times (0,1) \setminus \{u^k=0\}$ and observe that  $\partial_z u^k \le 0$ on $\partial U$. Indeed: 
\begin{itemize}
\item  $\partial_z u^k =0$ on $\{z=0\}$, by symmetry;
\item  $\partial_z u^k =0$ on $\partial  \{u^k=0\}$, since $u^k$ is nonnegative and of class $C^{1,1}$;
\item  $\partial_z u^k \le 0$ on $\{z=1\}$, since $u^k(r,1) =0$ while $u^k(r,z) \ge 0$ for $z<1$;
\item  $\partial_z u^k =\phi' \le 0$ on $\{r=1\}$. 
\end{itemize}
As a consequence, since ${\rm div}( r^{n-m-1}\nabla (\partial_z u^k))=0$ inside $U$ (this follows by differentiating the equation for $u$ with respect to $z$), we deduce  by the maximum principle that $\partial_z u^k \le 0$ in $U$.
Also, we note that $\partial_z u^k = 0$ in $(0,1)\times (0,1) \setminus U$ since $u^k\equiv 0$ there (recall that $u^k$ is nonnegative and of class $C^{1,1}$), proving (ii).

Finally, (iii) is an immediate consequence of (i) and (ii).

We now observe that, for $k$ sufficiently large, the contact set contains a neighborhood of the origin, and hence it must contain a cylindrical neighborhood of $\{r=0\}$ (thanks to (iii)). On the other hand, for $k\ll 1$ we have $u^k(0)>0$. 
Therefore, by continuity, there exists (a unique) $k_{\star}>0$ such that $\partial\{u^{k_\star}>0\}$ touches tangentially the line $\{r=0\}$. 

Set $u^\star := u^{k_\star}$. 
Observe that that, with this definition, the function 
\begin{equation}\label{formu2}
u(x):=u^{\star}(x_{m}, \sqrt{x_{m+1}^2 +\cdots+ x_{n}^2})
\end{equation}
has a full $(m-1)$-dimenional space of singular points on $\mathcal Z=\{x_m=x_{m+1} = \cdots=x_n=0\}$. 
Also, by the given symmetry, these singular points belong to the stratum $\Sigma_{m}$.

We now prove the following:
\begin{proposition}
Let $u$ be the  symmetric solution defined in \eqref{formu2}. Then the set $\mathcal Z\subset \Sigma_m$ consists of anomalous points, that is $\mathcal Z\subset \Sigma_m^a$.
\end{proposition}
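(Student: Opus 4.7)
The plan is to verify, at each $x_\circ\in\mathcal Z$, both that $x_\circ\in\Sigma_m$ and that $\lambda_{*,x_\circ}:=\phi(0^+,u(x_\circ+\cdot)-p_{*,x_\circ})<3$. By the translation invariance in $x_1,\ldots,x_{m-1}$ and the rotational invariance in $(x_{m+1},\ldots,x_n)$ of the construction, it suffices to treat $x_\circ=0$.

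For $0\in\Sigma_m$, the symmetries together with $\Delta p_{*,0}=1$ force $p_{*,0}(x)=\tfrac{\mu_1}{2}x_m^2+\tfrac{\mu_2}{2}|x'|^2$ with $\mu_1,\mu_2\ge 0$ and $\mu_1+(n-m)\mu_2=1$. The tangential touching defining $k_\star$ implies that, after rescaling by $\rho\downarrow 0$, the 2D positivity sets $\rho^{-1}\{u^\star>0\}$ fill the whole half-plane $\{r>0\}$, so the contact set of $p_{*,0}$ in $\R^n$ is the $m$-dimensional axis $\{x_{m+1}=\cdots=x_n=0\}$. This forces $\mu_1=0$, $p_{*,0}(x)=|x'|^2/(2(n-m))$, and $0\in\Sigma_m$.

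Set $w:=u-p_{*,0}$. Since $m\le n-2$, Proposition \ref{propblowup}(a) gives $\lambda_*\in\{2,3,4,\ldots\}$ and provides a non-trivial weakly subsequential limit $q$ of $\tilde w_{r_k}$ which is a $\lambda_*$-homogeneous harmonic polynomial. Because $w$ inherits the axial symmetry and $x_m$-evenness of $u$, so does $q$; any axisymmetric polynomial even in $x_m$ is a polynomial in $x_m$ and $|x'|^2$ with only even powers of $x_m$ and therefore has even total degree. Hence $\lambda_*\in\{2,4,6,\ldots\}$ and the inequality $\lambda_*<3$ is equivalent to $\lambda_*=2$. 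When this holds, Lemma \ref{lemmatrix} together with \eqref{howisD2q} (with $L_0=\{x'=0\}$) pins down $q=\beta(|x'|^2-(n-m)x_m^2)$ for some $\beta\neq 0$.

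The hard step is ruling out $\lambda_*\ge 4$. I would leverage the criticality of $k_\star$: the map $k\mapsto u^k$ is monotone and continuous, and $k_\star$ is exactly the threshold beyond which the origin enters the contact set. By comparing $u^{k_\star}$ with the nearby solutions $u^{k_\star\pm\varepsilon}$---and combining the orthogonality of Lemma \ref{charq} with the Monneau formula (Corollary \ref{monneau})---one should be able to extract a non-trivial 2-homogeneous component in the blow-up of $w$, forcing $\lambda_*=2$. This is where the anomalous, sub-polynomial nature of the behavior described in Remark \ref{rmk:optimal}(2)(b) is essential: a naive Taylor expansion of $u^\star$ in the positivity region would formally point to $\lambda_*=4$, so the criticality of $k_\star$ (and not merely the local shape of the free boundary) must be used to obtain $\lambda_*=2$.
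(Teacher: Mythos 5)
Your first paragraph (reduction to the origin by symmetry) and your second paragraph (identification of $p_{*,0}$ and membership in $\Sigma_m$) are fine in spirit, and your parity observation---that the symmetries force the blow-up $q$ to be a polynomial in $x_m^2$ and $|x'|^2$, hence $\lambda_*$ even, so the goal is precisely $\lambda_*=2$---is a correct and clean reduction that the paper does not make explicit.

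However, the fourth paragraph is where the proof actually has to happen, and you do not supply one. ``Leverage the criticality of $k_\star$ \dots\ one should be able to extract a non-trivial $2$-homogeneous component in the blow-up'' is a program, not an argument: the monotonicity and continuity of $k\mapsto u^k$ give you no quantitative control on $u^{k_\star}-p_{*,0}$ near the origin (the perturbed solutions $u^{k_\star\pm\ep}$ differ from $u^{k_\star}$ by $O(\ep)$ in $L^\infty$, which says nothing about the $o(r^2)$ tail whose rate you need), and neither the orthogonality of Lemma~\ref{charq} nor the Monneau formula is a mechanism for producing a nonzero $2$-homogeneous limit---both are one-sided inequalities that are perfectly consistent with $\lambda_*\geq 4$. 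The paper's actual mechanism is entirely different and is really a free-boundary, not a blow-up, argument: one differentiates the equation in $z$, so that $h=-\partial_z u^\star\geq 0$ solves the degenerate PDE ${\rm div}(r^{n-m-1}\nabla h)=0$ in the positivity set, then constructs explicit barriers $S_\alpha=\rho^{1+\alpha/4}\Theta_\alpha(\theta)$ in slightly truncated cones to bound $h$ from below by $c\rho^{1+\alpha/4}$, and finally runs a Hopf-type comparison with a translated parabolic barrier to force the existence, at every small scale $\varrho_\circ$, of a free boundary point $Y_{\varrho_\circ}$ with $Y_{\varrho_\circ}\cdot\boldsymbol e_r\gtrsim\varrho_\circ^{1+\alpha}$. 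This quantitative ``fattening'' of the contact set away from the axis is then directly incompatible with $\lambda_*\geq 3$: if $\lambda_*\geq 3$, the Monneau-type decay $H_3$ bound plus superharmonicity of $u-p_*$ yields $u-p_*\geq -CR^3$ in $B_R$, forcing $r\leq C|(z,r)|^{3/2}$ on the free boundary, which contradicts $Y_{\varrho_\circ}\cdot\boldsymbol e_r\gtrsim\varrho_\circ^{1+\alpha}$ for $\alpha<1/2$. None of this structure---differentiating the equation, building barriers for the degenerate operator, the Hopf comparison, the ``no nearby singular points'' step needed to make the free boundary smooth away from the origin---appears in your sketch, so the central claim $\lambda_*<3$ is left unproven.
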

\begin{proof}
The proof consists of three steps. 
\smallskip

- {\em Step 1}. We show that $u$ has no other singular points in a neighborhood of $\mathcal Z$ (except of course the points in $\mathcal Z$).

To prove this, assume by contradiction that there exists a sequence of singular points $x_\ell\rightarrow 0$. Note that, since $u$ is invariant in the first $m-1$ variable, we can assume that $x_\ell \in \{x_1=\ldots=x_{m-1}=0\}$.
Then, by the symmetries of $u$ and Lemma \ref{codge2}, a blow-up  $q$ of $u-p_*$ at $0$ is a homogeneous harmonic polynomial that vanishes on the $x_m$-axis and that enjoys the same symmetries as $u$ (note that $p_*$ has the same symmetries as $u$ and vanishes on $r=0$). Thus, $q=q^\star(z,r)$, where $q^\star$ solves
\[
{\rm div}( r^{n-m-1} \nabla q^\star)=0,   \qquad q^\star(z,0) = 0 \quad \forall\, z,  \qquad  \mbox{and}\quad q^\star\not\equiv 0.
\]
Let $m$ be the degree of $q$. Since $q$ is smooth and $q^\star(z,0) = 0$, $q^\star$ must be a polynomial of the form
$\sum_{1\leq k\leq m/2}a_kz^{m-2k}r^{2k}$.
Let $k_0\geq 1$ be the first index such that $a_{k_0}\neq 0$.
Then
$$
0={\rm div}( r^{n-m-1} \nabla q^\star)
=\Bigl(2k_0[n-m+2(k_0-1)]a_{k_0}z^{m-2k_0}+r^{2}Q(z,r)\Bigr)r^{n-m-1+2(k_0-1)},
$$
where $Q$ is a polynomial of degree $m-2k_0-2$.
In particular, the terms inside the parenthesis cannot be identically zero, giving the desired contradiction.

As a consequence, there is a neighborhood of $\mathcal Z$ which is free of singular points. (except the points in $\mathcal Z$).
In particular, as a consequence of \cite{C77,C98}, there exists $\ep>0$ such that $(\partial \{u^{\star}>0\} \setminus \{0\})\cap B_\ep$ is a smooth curve contained inside $(-\ep,\ep)\times (0,\ep)\setminus\{0\}$.

\smallskip

- {\em Step 2}. We show that, for any $\alpha>0$ small, there exists $R =R(\alpha)>0$ small such that the following holds:
\begin{equation}\label{goal2222}
\forall \,\varrho_\circ\in(0,R), \ \exists\, Y_{\varrho_\circ} \in  \partial \{u^\star >0\}\cap B_{2\varrho_\circ} \quad \mbox{ s.t. }\quad Y_{\varrho_\circ}\cdot \boldsymbol e_r \ge \varrho_\circ^{1+\alpha},
\end{equation}
where $Y_{\varrho_\circ}\cdot \boldsymbol e_r$ denotes the $r$-component of the point $Y_{\varrho_\circ}$.

Let $\rho := \sqrt{r^2+z^2}$ and  $\theta := \arctan (z/r) \in (-\frac \pi 2,\frac \pi 2)$ be polar coordinates in  ${(0,1)\times(-1,1)}$. 
We now state the following fact, whose proof is postponed to the end of the Step 2. 

\smallskip

\noindent {\bf Claim.} For any  $\alpha>0$ small, there exists $\delta = \delta(\alpha)>0$, and a function $\Theta_\alpha: [\delta,\frac \pi 2 -\delta] \rightarrow \R$ smooth, such that  $S_\alpha(r,z) := \rho^{1+\alpha/4} \Theta_\alpha(\theta)$ satisfies
\[
{\rm div}( r^{n-m-1} \nabla S_\alpha)=0 \quad \mbox{ in the cone $\mathcal C_\delta := \left\{(\rho,\theta)\in (0,1)\times (\delta,\frac \pi 2 -\delta)\right\},$}
\] 
and 
\begin{equation}
\label{eq:theta}
\Theta_\alpha(0)= \Theta_\alpha\Big(\frac \pi 2-\delta\Big) =0,\qquad 
 \Theta_\alpha>0\quad \text{for $\theta\in \Big(\delta,\frac \pi 2-\delta\Big)$}. 
\end{equation}

\smallskip

Note that, since $0$ is a singular point and recalling the symmetries of $u$, $u(x)=p_*(x)+o(|x|^2)$ where $p_*(x)=\frac{1}{2(n-m)}\sum_{i=m+1}^nx_i^2$.
This implies that $u^\star(z,r) \geq \frac{1}{4(n-m)}r^2+o(z^2)$ near the origin. Thus,
given $\alpha>0$, there exists $\eta=\eta(\alpha)>0$ small enough such that the inclusion
\begin{equation}
\label{eq:inclusion}
\mathcal C_\delta \cap B_\eta \subset\mathcal C_{\delta/2} \cap B_\eta\subset  \{u^\star >0\}
\end{equation}
holds. Note that, up to reducing $\eta$, we can assume that $\eta<\ep,$ so that (by Step 1) $\partial\{u^\star>0\}\setminus \{0\}$ is a smooth curve inside $B_\eta$.

Let  us consider the function 
$h :=-\partial_z u^{\star}$, and recall that $h\geq 0$ (by property (iii) in the construction of $u^\star$). Also, differentiating the equation 
$$
{\rm div}( r^{n-m-1}\nabla u^\star) = k_\star\,r^{n-m-1}\qquad \text{in $\{u^\star>0\}$}
$$
with respect to $z$,
we obtain
\begin{equation}
\label{eq:h}
{\rm div}( r^{n-m-1}\nabla h) = 0\qquad \text{in $\{u^\star>0\}$}.
\end{equation} 
Since $h>0$ inside $\{u^\star>0\}$ (by the strong maximum principle), it follows that
\begin{equation}
\label{eq:free h}
\partial\{h >0\}\cap B_\eta\cap \{z>0\}=\partial\{u^\star >0\}\cap B_\eta\cap \{z>0\}
\end{equation} 
and $\mathcal C_\delta \cap \partial B_\eta\subset \subset \{u^\star>0\}$. Thus
there exists a constant $c_0=c_0(\alpha)>0$ such that $h\geq c_1S_\alpha$ on $\mathcal C_\delta \cap \partial B_\eta$. In addition $h\geq 0=S_\alpha$ on $\partial\mathcal C_\delta \cap B_\eta$. Hence, since $h$ and $S_\alpha$ solve the same equation,
it follows by the maximum principle that $h\geq S_\alpha$ inside $\mathcal C_\delta\cap B_\eta$. Recalling \eqref{eq:theta}, 
this implies that 
\begin{equation}\label{bybelow}
h \ge c_1\rho^{1+\alpha/4} \qquad \mbox{in }  \left\{ (\rho,\theta)\in (0,\eta)\times [2\delta ,\textstyle\frac \pi 2 -2\delta]\right\} 
\end{equation}
for some $c_1= c_1(\alpha)>0$.

We now want to find a lower bound on the normal derivative of $h$ at points on $\partial\{u^\star>0\}$. For this
we use a Hopf-type argument, constructing suitable barriers for our operator ${\rm div}(r^{n-m-1}\nabla \,\cdot\,)$.
These are given by the family of functions
\[S^{H}_{b}(r,z) := 2(r-b)  - (n-m-1)(z-1)^2,\quad\mbox{ where $b\in \R$}.\]
Note that 
\[
{\rm div}(  r^{n-m-1} \nabla S^{H}_b)=0\quad \mbox{in }\{r>0\},
\]
and $S^{H}_b<0$ outside of the parabolic region $\mathcal P_b:=\left\{b +\frac{n-m-1}{2} (z-1)^2 \le r\right\}$.

Now, given $0<\varrho_\circ\ll\eta$ we consider the rescaled function
\[ h_{\varrho_\circ} := \varrho_\circ^{-1-\alpha/2} h(\varrho_\circ\,\cdot\,)\]
and we note that (thanks to \eqref{bybelow})
\begin{equation}\label{bybelow2}
h_{\varrho_\circ}\bigr(\rho,\textstyle\frac \pi 2 -2\delta\bigl)\geq c_1\varrho_\circ^{-\alpha/4}\rho^{1+\alpha/4}\qquad \forall\,\rho \in (0,\textstyle\frac\eta{\varrho_\circ}).
\end{equation}
%which implies that $\partial\{h_{\varrho_\circ} >0\}\cap\{z>0\}\cap B_{\eta/\varrho_\circ}$ is contained in the cone $\hat{\mathcal C}_\delta:=
%\{\theta \in \left(\textstyle\frac \pi 2 -2\delta,\textstyle\frac \pi 2\right)\}$.

Consider now our barriers $S_b^H$. For $b>1/2$
the parabolic region $\mathcal P_b$ is contained inside the set $\left\{\theta<\textstyle\frac \pi 2 -2\delta\right\}$. In particular $S_b^H<0$ inside the cone $\hat{\mathcal C}_{2\delta} := \left\{\theta \in (\frac \pi 2-2\delta,\frac \pi 2) \right\}$. 
Then, we start decreasing $b$ until the first value $b_\circ$ such that $\partial\mathcal  P_{b_\circ}$ touches $\partial\{h_{\varrho_\circ} >0\}$.
Note that, thanks to \eqref{eq:free h} and Step 1,  $b_\circ >0$ and the contact point will happen for some $\tilde Y_{\varrho_\circ}\in \{r>0\}$.

Since
$h_{\varrho_\circ}\geq 0=S_{b_\circ}^H$ on $\partial\mathcal  P_{b_\circ}\cap \hat{\mathcal C}_{2\delta}$ and
$h_{\varrho_\circ}\geq c_2(\alpha)\varrho_\circ^{-\alpha/4}\geq S_{b_\circ}^H$ on $\mathcal P_{b_\circ}\cap \partial \hat{\mathcal C}_{2\delta}$ for $\varrho_\circ$ sufficiently small (see \eqref{bybelow2}), it follows by
the maximum principle that
 $h_{\varrho_\circ}\geq S_{b_\circ}^H$ inside $\mathcal P_{b_\circ}\cap \hat{\mathcal C}_{2\delta}$.
 Hence, since both $h_{\varrho_\circ}$ and $S_{b_\circ}^H$ vanish at $\tilde Y_{\varrho_\circ}$,
 we deduce 
that
\begin{equation}\label{normder}
\partial_\nu h_{\varrho_\circ} (\tilde Y_{\varrho_\circ}) \ge \partial_\nu S_{b_\circ}^H(\tilde Y_{\varrho_\circ}) \geq c_2>0, 
\end{equation}
where $\nu$ is the unit inwards normal to $\partial\{h_{\varrho_\circ} >0\}$, and $c_2=c_2(\alpha)$ is independent of $\varrho_\circ$ (here we use $b_\circ\le 1/2$). Observe also that, provided $\delta$ is small enough, $\mathcal P_{b_\circ}\cap \hat{\mathcal C}_{2\delta}\subset B_2$.

Rescaling \eqref{normder}, we obtain that for all $\varrho_\circ \in (0,\eta)$ small enough there is a point $Y_{\varrho_\circ} \in \partial \{u^\star >0\} \cap (0,1)\times (0,1)$ such that,
\begin{equation}\label{notc1alpha}
|Y_{\varrho_\circ}|\le 2\varrho_\circ\qquad \mbox{and} \qquad \partial_\nu  h(Y_{\varrho_\circ}) \ge c_2  \varrho_\circ^{\alpha/2},
\end{equation}
where $\nu$ denotes the unit inwards normal to $\{u^\star >0\}$ (see \eqref{eq:free h}).

To conclude the proof we observe that differentiating the equation 
\[{\rm div}(\nabla u^\star) = k^\star \, r\, \chi_{u^\star>0}\] with respect to $z$ we obtain $-{\rm div}(r^{n-m-1}\nabla h) =  k^\star\, r^{n-m-1} \nu_z\, \mathcal H^1|_{\partial\{u^\star>0\}}$, thus
\[
-r^{n-m-1} \partial_\nu h = k^\star\, r^{n-m-1} \nu_z \quad \mbox{on }{\partial\{u^\star>0\}}
\]
where $\nu_z=\nu \cdot \boldsymbol e_z$ denotes the $z$-component of  $\nu$.
Recalling \eqref{notc1alpha}, this proves that
\begin{equation}\label{goal1111}
\nu_z( Y_{\varrho_\circ}) \le  -\frac{c_2}{k^\star} \varrho_\circ^{\alpha/4}\qquad \mbox{and} \qquad |Y_{\varrho_\circ}| \le 2| \varrho_\circ|.
\end{equation}

Note that  \eqref{goal1111} already implies that $\partial \{u^\star >0\}$ cannot be a $C^{1,\alpha}$ curve at $0$.
We now prove the more precise estimate \eqref{goal2222}.

Since $b_\circ>0$ and the rescaled parabolic cap
$$\left\{(\varrho_\circ z,\varrho_\circ r)\,:\,{b_\circ} +\frac{n-m-1}{2} (z-1)^2 \le r \le 1 \right\}$$ touches $\partial \{u^\star =0\}$  at $Y_{\varrho_\circ}=(r_{\varrho_\circ},z_{\varrho_\circ})$, 
it follows that
\begin{equation}\label{eq:goal}
\frac{n-m-1}{2\varrho_\circ} (z_{\varrho_\circ}-\varrho_\circ)^2<\varrho_\circ{b_\circ} +\frac{n-m-1}{2\varrho_\circ} (z_{\varrho_\circ}-\varrho_\circ)^2= r_\circ 
\end{equation}
and that
\begin{equation}\label{eq:goal2}
\nu( Y_{\varrho_\circ})=\frac{\left(1,\frac{n-m-1}{\varrho_\circ}(z_{\varrho_\circ}-\varrho_\circ)\right)}{\sqrt{1+\left(\frac{n-m-1}{\varrho_\circ}(z_{\varrho_\circ}-\varrho_\circ)\right)^2}}.
\end{equation}
Combining \eqref{eq:goal} with \eqref{goal1111},
we get
$$
\frac{\frac{n-m-1}{\varrho_\circ}(z_{\varrho_\circ}-\varrho_\circ)}{\sqrt{1+\left(\frac{n-m-1}{\varrho_\circ}(z_{\varrho_\circ}-\varrho_\circ)\right)^2}} \leq -\frac{c_2}{k^\star} \varrho_\circ^{\alpha/4}
\qquad \Rightarrow\qquad \frac{n-m-1}{\varrho_\circ}(z_{\varrho_\circ}-\varrho_\circ)
\leq -\frac{c_2}{k^\star}\varrho_\circ^{\alpha/4}.
$$
Recalling \eqref{eq:goal}, this yields
$$
\frac{1}{2(n-m-1)}\Bigl(\frac{c_2}{k^\star}\Bigr)^2 \varrho_\circ^{1+\alpha/2}\leq r_\circ=Y_{\varrho_\circ}\cdot \boldsymbol e_r.
$$
Since $\varrho_\circ^{1+\alpha}\ll\varrho_\circ^{1+\alpha/2}$, this
proves \eqref{goal2222}.

We conclude Step 2 proving the claim.

\vspace{3pt}
\noindent \emph{Proof of Claim}. The linear function $z = \rho \sin \theta$ satisfies ${\rm div}( r^{n-m-1}  \nabla z)=0$.
Also $\Theta_0(\theta) = \sin \theta$ is positive on $(0,\pi/2)$ and satisfies $\Theta_0(0)=0$, hence $\Theta_0$ must be the minimizer of  the Rayleigh quotient 
\[
\min\bigg\{ \frac{ \int_0^{\pi/2} (\cos\theta)^{n-m-1} (\Theta')^2}{\int_0^{\pi/2} (\cos\theta)^{n-m-1} (\Theta)^2}  \ :\  \Theta(0)=0 \bigg\} ={n-m}
\]
(equivalently, recalling \eqref{formu}, the restriction to the harmonic function $x_m$ to $\R^{m-1}\times \mathbb S^{n-m}\subset \R^{m-1}\times \R^{n-m+1}$
is a minimizer of the Rayleigh quotient for the classical Dirichlet integral on $\mathbb S^{n-m}\cap \{x_m\geq 0\}$).

Since $n-m-1\ge1$ (this is where we crucially use this assumption), we have 
\[
\lim_{\delta\downarrow 0} \min\bigg\{ \frac{ \int_0^{\pi/2-\delta} (\cos\theta)^{n-m-1} (\Theta')^2}{\int_0^{\pi/2-\delta} (\cos\theta)^{n-m-1} (\Theta)^2}  \ :\  \Theta(\delta)=\Theta(\pi/2-\delta)=0 \bigg\}\downarrow n-m
\]
(this is equivalent to saying that the north pole in $\mathbb S^{n-m}\cap \{x_m\geq 0\}$ has harmonic capacity $0$, so the boundary condition $\Theta(\pi/2-\delta)=0$ disappears in the limit $\delta \to 0$).
Thus, by continuity, for any $\alpha>0$ small there exists $\delta=\delta(\alpha)>0$ small enough such that the previous Rayleigh quotient in $(\delta ,\pi/2-\delta)$ will give the value $$\mu_\alpha:=(n-m+\alpha/4)(1+\alpha/4).$$
This implies that if we denote by $\Theta_\alpha$ the first eigenfunction (i.e., the function attaining the minimal quotient value $\mu_\alpha$), then
$-( (\cos\theta)^{n-m-1}\Theta_\alpha')'=\mu_\alpha (\cos\theta)^{n-m-1}\Theta_\alpha$, and it follows by a direct computation (or by classical spectral theory) that $S_\alpha(r,z) := \rho^{1+\alpha/4} \Theta_\alpha(\theta)$ satisfies
${\rm div}( r^{n-m-1} \nabla S_\alpha)=0$, as desired. Finally, the strict positivity of $\Theta_\alpha$ inside $\left(\delta,\frac\pi2-\delta\right)$ is a classical property of the first eigenfunction.

\smallskip

- {\em Step 3.} We conclude the proof of the proposition by showing that  \eqref{goal2222} is incompatible with $0\in \Sigma^g_m$.
Recall that, by definition, $0$ belongs to $\Sigma^g_m$  (resp. $\Sigma^a_m$)  if $\phi(0^+,u-p_*) \ge3$   (resp. $\phi(0^+,u-p_*)<3$), see \eqref{eq:def anomalous}. 

Assume by contradiction that $\phi(0^+,u-p_*) \ge3$. Then by Lemma \ref{Hincreasing} we have
\[
\ave_{B_R} |u-p_*| \le \ \left(\ave_{B_R} (u-p_*)^2\right)^{1/2} \le CR^{3}\qquad \forall\,R>0.
\]
We now note that 
\[ \Delta(u-p_*) = -\chi_{\{u=0\}} \le 0, \]
so it follows by the mean value formula for superharmonic functions that
\begin{equation}\label{1234}
u-p_* \ge -CR^3 \quad \mbox{in } B_R.
\end{equation}
Recalling that $p_* = p_*(z,r) = \frac{1}{2(n-m)} r^2$ in the variables $(z,r)$, it follows by \eqref{1234} that
\[
-\frac{1}{n-m} r^2 \ge -C(r^2+ z^2)^{3/2}\quad \mbox{on } \partial\{u=0\}, 
\] 
therefore
\[
r \le C |(z,r)|^{3/2}\quad \mbox{on } \partial\{u =0\},
\] 
which clearly contradicts \eqref{goal2222} if we choose $\alpha<1/2$.
As a consequence $0$ (and  by symmetry all points on $\mathcal Z$) must belong to $\Sigma^a_m$.
\end{proof}

\end{document}